\documentclass[11pt, twoside]{amsart}

\title{Noncommutative Shifted Symmetric Functions}
\date{\today}
\author{Robert Laugwitz and Vladimir Retakh}
\address{Department of Mathematics, Rutgers University,
Hill Center, 110 Frelinghuysen Road,
Piscataway, NJ 08854-8019}
\email{robert.laugwitz@rutgers.edu}
\urladdr{https://www.math.rutgers.edu/~rul2/}


\usepackage[all]{xy}
\usepackage{mathabx}
\usepackage{amsmath}
\usepackage{amsfonts}
\usepackage{amsthm}
\usepackage{amssymb}
\usepackage[alphabetic, initials]{amsrefs}
\usepackage[english]{babel}
\usepackage{url}
\usepackage{fancyhdr}
\usepackage{graphicx}
\usepackage[
colorlinks=true,
linkcolor=black, 
anchorcolor=black,
citecolor=black, 
urlcolor=black, 
]{hyperref}
\usepackage{geometry}



\newcommand{\spower}[2]{\left({#1}\!\downarrow \!{#2}\right)}
\newcommand{\opower}[2]{{#1}^{\downarrow {#2}}}
\newcommand{\abinom}{\genfrac{\{}{\}}{0pt}{}}
\newcommand{\dpower}[2]{\left|{#1}\!\downarrow \!{#2}\right|}


\newcommand{\ide}{\operatorname{Id}}

\newcommand{\Sym}{\mathbf{Sym}}


\providecommand{\fr}[1]{\mathfrak{#1}}

\newcommand{\mQ}{\mathbb{Q}}
\newcommand{\mZ}{\mathbb{Z}}
\newcommand{\mN}{\mathbb{N}}




\newtheorem{theorem}{Theorem}[]

\newtheorem{proposition}[theorem]{Proposition}
\newtheorem{corollary}[theorem]{Corollary}
\newtheorem{lemma}[theorem]{Lemma}

\newtheorem{theorem*}{Theorem}

\theoremstyle{definition}
\newtheorem{definition}[theorem]{Definition}

\newtheorem{assumption}[theorem]{Assumption}

\theoremstyle{remark}
\newtheorem{example}[theorem]{Example}
\newtheorem{remark}[theorem]{Remark}



\geometry{left=2.5cm,right=3cm,top=2.5cm,bottom=2.5cm, footskip=1cm}


\begin{document}

\begin{abstract}
We introduce a ring of noncommutative shifted symmetric functions based on an integer-indexed sequence of shift parameters. Using generating series and quasideterminants, this multiparameter approach produces deformations of the ring of noncommutative symmetric functions. Shifted versions of ribbon Schur functions are defined and form a basis for the ring. Further, we produce analogues of Jacobi--Trudi and N\"agelsbach--Kostka formulas, a duality anti-algebra isomorphism, shifted quasi-Schur functions, and Giambelli's formula in this setup. In addition, an analogue of power sums is provided, satisfying versions of Wronski and Newton formulas. Finally, a realization of these noncommutative shifted symmetric functions as rational functions in noncommuting variables is given. These realizations have a shifted symmetry under exchange of the variables and are well-behaved under extension of the list of variables.
\end{abstract}
\subjclass[2010]{05E05}
\keywords{Noncommutative Symmetric Functions, Shifted Symmetric Functions, Schur Functions,
Quasideterminants}
\dedicatory{Dedicated to Grigori Olshanski on occasion of his 70th birthday}
\maketitle

\tableofcontents


\section{Introduction}

Let $\Bbbk$ be a field of characteristic zero. The Hopf algebra of symmetric functions $\Lambda=\varprojlim \Lambda_k$, where $\Lambda_k=\Bbbk[x_1,\ldots,x_n]^{S_n}$, emerges in several areas of mathematics (see e.g. \cite{Mac}). The symmetric functions $\Lambda$ have many applications, especially in representation theory --- for example, the understanding of character formulas for representations of the symmetric groups. The study of $\Lambda$ has a strong combinatorial nature due to the presence of the favourable $\mZ$-basis $\lbrace s_\lambda\rbrace_{\lambda}$ of \emph{Schur functions}, indexed by partitions $\lambda$ \cite{Sch}.

An interesting deformation $\Lambda^*$ of the ring of symmetric functions $\Lambda$ is given in \cite{OO}. This deformation is used to describe the center of $U(\fr{gl}_n)$ as well as the normalized characters of the tower of symmetric groups. The ring $\Lambda^*$ posses a basis of \emph{shifted Schur functions} also indexed by partitions. Their definition is closely related to the \emph{factorial}  Schur functions due to \cites{BL1,BL2}, or the more general multiparameter (or \emph{double}) Schur functions of \cites{Mac2,GG,Mac}.  Note also their connection to flagged double Schur functions \cites{CLL,Las}.

The authors of \cite{Getal} define a Hopf algebra of \emph{noncommutative} symmetric functions $\Sym$ and provide a wealth of analogous results to the commutative case. This approach makes heavy used of \emph{quasideterminants} as a noncommutative analogue of determinants \cites{GR1,GR2,GGRW}. A $\mZ$-basis for $\Sym_{\mZ}$ with good properties is given by the \emph{ribbon Schur functions} $\lbrace R_I\rbrace_I$ indexed by compositions rather than partitions.  A representation theoretic interpretation of this Hopf algebras is given by projective modules over the zero Hecke algebras, where indecomposables are indexed by such partitions \cites{DKLT,KT}.

This paper develops a theory of shifted symmetric functions in a noncommutative setup. Using a multiparameter approach, we adapt the setup of \cites{OO,ORV2,Mol2} while using quasideterminant techniques similar to \cite{Getal}. The results are summarized in Section \ref{summary}.

\subsection{Shifted Symmetric Functions}

In \cite{OO}, Okounkov--Olshanski define a remarkable deformation of the ring of symmetric functions, the ring of \emph{shifted symmetric functions} $\Lambda^*=\varprojlim \Lambda_n^*$. Here, $\Lambda_n^*$ is the ring of polynomials $\Bbbk[x_1,\ldots,x_n]$ stable under the symmetries $(x_i,x_{i+1})\leftrightarrow (x_{i+1}-1,x_i+1)$. The ring $\Lambda^*$ is the free commutative $\Bbbk$-algebra generated by two series: the complete homogeneous shifted symmetric functions $h_1^*,h_2^*,\ldots$, and the elementary shifted symmetric functions $e_1^*,e_2^*,\ldots$. A $\mZ$-basis for $\Lambda_{\mZ}^*=\mZ[h_1^*,h_2^*,\ldots]$ is given by shifted Schur functions $\lbrace s_\lambda^*\rbrace_{\lambda}$, where $\lambda$ are partitions. 
By a Jacobi--Trudi type formula, these functions are defined as
\begin{align}
s_\lambda^*=\det\begin{pmatrix}
h_{\lambda_1}^*&\phi(h_{\lambda_1+1})&\ldots &\phi^{k-1}(h_{\lambda_1+k-1})\\
h_{\lambda_2-1}^*&\phi(h_{\lambda_2})&\ldots &\phi^{k-1}(h_{\lambda_2+k-2})\\
\vdots&\vdots&\ldots&\vdots\\
h_{\lambda_k-k+1}^*&\phi(h_{\lambda_k-k+2})&\ldots &\phi^{k-1}(h_{\lambda_k})
\end{pmatrix},
\end{align}
where $\phi$ is a shift operator, and $\lambda=(\lambda_1\geq \lambda_2\geq \ldots\geq \lambda_k\geq 0)$. The shifted Schur functions also satisfy N\"agelsbach--Kostka and Giambelli formulas. Further, there exists a duality algebra-automorphism $\omega$ satisfying $\omega(s_\lambda^*)=s_{\lambda'}^*$ for the dual partition $\lambda'$ \cite{OO}*{Theorem 4.2}. These shifted Schur functions can further be described using a quotient of determinants
\begin{align}
s_{\lambda}^*(x_1,\ldots,x_n)=\frac{\det\begin{pmatrix}
\opower{(x_1+n-1)}{\lambda_n}&\opower{(x_2+n-2)}{\lambda_n}&\ldots& \opower{x_n}{\lambda_n}\\
\opower{(x_1+n-1)}{(\lambda_{n-1}+1)}&\opower{(x_2+n-2)}{(\lambda_{n-1}+1)}&\ldots& \opower{x_n}{(\lambda_{n-1}+1)}\\
\vdots&\vdots&\ldots&\vdots\\
\opower{(x_1+n-1)}{(\lambda_{1}+n-1)}&\opower{(x_2+n-2)}{(\lambda_{1}+n-1)}&\ldots& \opower{x_n}{(\lambda_{1}+n-1)}
\end{pmatrix}}{\det \begin{pmatrix}
\opower{(x_1+n-1)}{0}&\opower{(x_2+n-2)}{0}&\ldots &\opower{x_n}{0}\\
\vdots&\vdots&\ldots&\vdots\\
\opower{(x_1+n-1)}{(n-1)}&\opower{(x_2+n-2)}{(n-1)}&\ldots &\opower{x_n}{(n-1)}
\end{pmatrix}}.
\end{align}
Here, the falling monomials $\opower{x}{n}=x(x-1)\ldots(x-n+1)$, with $\opower{x}{0}=1$, replace the monomials $x^n$, and $\lambda_l=0$ if $l>k$. These polynomials $s_{\lambda}^*(x_1,\ldots,x_n)$ are stable under the shifted symmetry $(x_i,x_{i+1})\leftrightarrow (x_{i+1}-1,x_i+1)$.
 A notable extension property of the shifted Schur functions $s^*_\lambda$ is that 
\begin{align}
s_\lambda^*(x_1,\ldots, x_n,0)=s_\lambda^*(x_1,\ldots, x_n).
\end{align}
This property ensures that there are morphism of algebras $\Lambda_{n+k}^*\to \Lambda_n^*$ and thus the projective limit $\Lambda^*=\varprojlim \Lambda_n^*$ can be formed. 

Note that for fixed $n$, a change of variables $y_i:=x_i-i$ displays the elements of $\Lambda_n^*$ as 
\emph{symmetric} functions in $(y_1,\ldots, y_n)$. However, in the limit no such substitution is possible, and $\Lambda^*$ defines a genuine deformation of $\Lambda$, which is isomorphic to  the associated graded of $\Lambda^*$ with respect to the polynomial degree grading, cf. \cite{OO}*{Section \S 1}. For interesting recent  applications of $\Lambda^*$ to categorification and nonabelian probability theory see \cite{KLM}.

\subsection{Multiparameter Framework}

The falling monomial $\opower{x}{n}=x(x-1)\ldots(x-n+1)$ equals $(x-a_1)(x-a_2)\ldots(x-a_n)$, where $a_i=i-1$. More generally, one can consider shifted monomials (or powers) 
\begin{align}
\spower{x}{a}^n=(x-a_1)(x-a_2)\ldots(x-a_n),
\end{align}
where $a=(a_i)_{i\in \mZ}$ is a sequence of numbers in $\Bbbk$. This general point of view leads to the definition of \emph{multiparameter} (or \emph{double}) Schur functions  \cite{Mac}*{p. 54ff}. 

In this generality, a ring of $a$-shifted symmetric functions, which we denote by $\Lambda^a$, has been defined in \cite{Oko}*{Remark 2.11}. The duality $\omega$ for shifted symmetric functions is replaced by a more general duality algebra-isomorphism $\omega_a\colon \Lambda^a\to \Lambda^{\hat{a}}$, where $\hat{a}$ are the dual parameters \cite{Mol2}*{Section 2.3}. In \emph{loc.cit.} it is also shown that analogues of the Jacobi--Trudi, N\"agelsbach--Kostka and Giambelli formulas still hold this setup.
Moreover, an extension of the comultiplication structure to the multiparameter framework has been provided in the same paper. 

Super-specializations of multiparameter Schur functions have been studied in \cites{Mol,ORV2}. In particular, this framework has been utilized in \cite{ORV2} to study Frobenius--Schur functions. Applications of these variations of Schur functions exist to equivariant
Schubert classes on Grassmannians (see e.g. references in \cite{Mol2}*{Introduction}). 

\subsection{Noncommutative Symmetric Functions}

Consider $\Sym=\Bbbk\langle S_1,S_2,\ldots\rangle$, the Hopf algebra of noncommutative symmetric functions \cite{Getal}*{Chapter~3}. Note that $\Sym$ is a free graded associative $\Bbbk$-algebra, with generators $S_k$ of degree $k$ --- analogues of complete homogeneous symmetric functions. 
There exist alternative series of free generators for $\Sym$. For example, the elementary symmetric functions $\Sym=\Bbbk\langle \Lambda_1,\Lambda_2,\ldots \rangle$. 
A $\mZ$-basis for this algebra, with coefficients in $\mZ$, is given by so-called \emph{ribbon Schur functions} $R_I$, indexed by compositions $I=(i_1,\ldots, i_n)\in \mN_{\geq 1}^n$. These can be defined using the quasideterminants of \cites{GR1,GR2,GGRW} by
\begin{align}
R_{I}=(-1)^{n-1}\begin{vmatrix}
S_{i_1}&S_{i_1+i_2}&\hdots&S_{i_1+\ldots +i_{n-1}}&\boxed{S_{i_1+\ldots +i_{n}}}\\
1&S_{i_2}&\hdots&S_{i_2+\ldots+i_{n-1}}&S_{i_2+\ldots+i_{n}}\\
0&\ddots&\ddots&\vdots&\vdots\\
\vdots&\hdots&\ddots&S_{i_{n-1}}&S_{i_{n-1}+i_n}\\
0&\hdots&\hdots&1&S_{i_n}
\end{vmatrix}.
\end{align}
Multiplication of ribbon Schur functions follows the formula
\begin{align}
R_{(i_1,\ldots,i_n)}R_{(j_1,\ldots,j_m)}=R_{(i_1,\ldots,i_n,j_1,\ldots,j_m)}+R_{(i_1,\ldots,i_{n-1},i_n+j_1,j_2,\ldots,j_m)},
\end{align}
see \cite{Getal}*{Proposition 3.13}, due to MacMahon \cite{MM} in the commutative case.

A second analogue of Schur functions is given by \emph{quasi-Schur functions}, which are only contained in the skew-field of $\Sym$. These quasi-Schur functions $\breve{S}_\lambda$ are indeed labeled by partitions $\lambda=(\lambda_1\geq \lambda_2\geq \ldots \geq \lambda_n)$, and can also be defined in terms of quasideterminants by
\begin{equation}
\breve{S}_{\lambda}=(-1)^{n-1}\begin{vmatrix}
S_{\lambda_1}&S_{\lambda_2+1}&\ldots&\boxed{S_{\lambda_n+n}}\\
S_{\lambda_1-1}&S_{\lambda_2}&\ldots&S_{\lambda_n-{n}+1}\\
\vdots&\vdots&\ddots&\vdots\\
S_{\lambda_1-{n}+1}&S_{\lambda_2-n+2}&\ldots&S_{\lambda_n}
\end{vmatrix}.
\end{equation}
For these, an analogue of Giambelli's formulas exists \cite{Getal}*{Proposition 3.20}.

Other analogues for Schur functions in the noncommutative case have been constructed (see e.g. \cites{Betal1,Betal2} and reference therein), including the \emph{immaculate basis} which maps to the Schur functions under the natural map $\Sym\to \Lambda$, $S_k\mapsto h_k$.


\subsection{Summary}\label{summary}

This paper develops a theory of noncommutative \emph{shifted} symmetric functions. As in the theory of noncommutative symmetric functions, we use two different approaches to analogues of Schur functions: ribbon Schur functions and quasi-Schur functions. The advantage of ribbon Schur functions is that a basis for a ring of noncommutative shifted symmetric functions is obtained, while certain formulas are more natural in the shifted quasi-Schur function setup. 

First, this paper introduces rings of noncommutative $a$-shifted symmetric functions $\Sym^a$ using an approach through generating series  (Section \ref{sect2.1}--\ref{shiftsection}). These graded algebras depend on an arbitrary sequence of parameter $a=(a_i)_{i\in \mZ}$ in $\Bbbk$ and have different series of free generators: complete homogeneous generators $S_{k;a}$, and elementary symmetric generators $\Lambda_{k;a}$ (as well as power sums $\Psi_{n;a}$, see Section \ref{powersection}). Using the theory of quasideterminants, base change formulas between these generators can be obtained in terms of quasideterminants, giving analogues of the classical Jacobi--Trudi and N\"agelsbach--Kostka formulas (Section \ref{sect2.3}). These formulas suggest a definition of multiparameter (or $a$-shifted) ribbon Schur functions
$R_{I;a}$ which form a $\mZ[a]$-basis for $\Sym^a_{\mZ}$. For these, a shifted MacMahon type multiplication formula 
is derived in Theorem \ref{products}. In addition, $a$-shifted ribbon Schur functions are preserved --- up to shift --- under a duality anti-algebra isomorphism $\omega_a\colon \Sym^a\to \Sym^{\hat{a}}$.

Next, we define a possible analogue of $a$-shifted power sums in Section \ref{powersection}, which can be expressed as a linear combination of $a$-shifted ribbon Schur functions corresponding to hook compositions. These power sum analogues also provide free algebra generators of $\Sym^a$. Analogues of Wronski and Newton formulas hold, and translation matrices between power sums, elementary and complete homogeneous generators are derived in terms of quasi-determinants. A natural Hopf algebra structure on $\Sym^a$ is suggested by requiring that the $a$-shifted power sums are primitive elements (Section \ref{hopf}).

In Section \ref{quasiSchur}, we consider $a$-shifted quasi-Schur functions. These are \emph{not} contained in $\Sym^a$ but in its associated skew-field. However, as their definition closely mimics the commutative case, certain formulas are easier to state. In particular, we proof a version of Giambelli's formula for $a$-shifted quasi-Schur functions in Proposition \ref{giambelli}.

Section \ref{sect3} discusses a specialization of $\Sym^a$ in terms of rational fuctions in a list of noncommuting variables (and their inverses). Here, we restrict to sequences of the form $a_i=i\cdot c+\operatorname{const}$. This way, we obtain deformations of the ring of noncommutative symmetric functions $\Sym$ in the sense that for $c=0$ we recover $\Sym$, and for $c\neq 0$, all deformations are isomorphic to $\Sym^*$ --- a noncommutative version of shifted symmetric functions, corresponding to the case $a_i=i-1$ related to \cite{OO}. This specialization rational expressions  $S_{k;a}(x_1,\ldots,x_n)$ and $\Lambda_{k;a}(x_1,\ldots,x_n)$ can be described in terms of a quotient of quasideterminants (see Theorem \ref{specialize}). A notable property of these specializations is their stability under extension of the list of variables (Section \ref{sect3.2}) which is central in the motivation of the ring of commutative shifted symmetric functions in \cite{OO}. Further, the specializations satisfy that 
\begin{equation}R_{I;a}(x_1,\ldots, x_i,x_{i+1},\ldots,x_n)=R_{I;a}(x_1,\ldots, x_{i+1}-c,x_{i}+c,\ldots,x_n).
\end{equation}
We note that if all variables $x_i$ commute, $S_{k;a}(x_1,\ldots,x_n)$ recovers the commutative shifted complete homogeneous symmetric functions $h_{k;a}(x_1,\ldots,x_n)$, see Proposition \ref{recover}. A similar statement holds for the elementary symmetric generators, but fails for general $a$-shifted ribbon Schur functions. Note that a similar restriction also applies to the ribbon Schur functions without shift.

\subsection{Outlook}

We hope that this paper can serve as the starting point for a theory of noncommutative shifted symmetric functions, analogous to the unshifted theory presented in \cite{Getal}. 

In both the noncommutative case of \cite{Getal}, and the shifted case of \cite{OO} there are more than one analogue of the  power sums in $\Lambda$. We suggest an analogue of noncommutative shifted power sums in Section \ref{powersection}, but there may be other natural analogues of power sums.

The Hopf algebra structure introduced in Section \ref{hopf} needs to be explored further. It does not include general formulas for the coproducts of the generators $\Lambda_{n;a}$, $S_{n;a}$, or the $a$-shifted Ribbon Schur functions yet. We note that in the commutative case, Molev defined dual Littlewood--Richardson polynomials \cite{Mol2}*{Section 4} to compute the coproduct coefficients. 
In addition, we expect applications of the theory presented to the representation theory of the zero Hecke algebra to emerge in forthcoming work. 


It shall also be mentioned that \cite{Oko} defines a larger class of parameter-dependent functions which contains factorial Schur functions as a special case, as well as interpolation Macdonald polynomials. It would be an interesting question to extend the current point of view to study noncommutative interpolation Macdonald polynomials.

\subsection{Acknowledgements}

The authors like to thank A. Ginory, B. Keller, A. Lauve, A. Molev, G. Olshanski, and C. Ozan O\u guz for interesting conversations and comments on the subject of this paper. The research of R. L. is partially supported by the Simons foundation.

\section{Noncommutative Shifted Symmetric Functions}

In this section, we introduce the algebra $\Sym^a$ --- the ring of $a$-shifted symmetric functions, based on a sequence of parameter $a=(a_i)$ for $i\in \mZ$. A main result of the section is the construction of a $\mZ[a]$-basis for $\Sym^a_{\mZ}$ of multiparameter (or $a$-shifted) ribbon Schur functions. 

\subsection{Definitions}\label{sect2.1}

Following \cites{Mol,ORV2}, similar to \cite{Mac}, let $a=(a_n)_{n\in \mZ}$ be a family of complex parameters. We define the \emph{$a$-shifted powers} 
\begin{align}\spower{x}{a}^k:=(x-a_1)\ldots(x-a_k).\end{align}
We denote the \emph{dual parameters} by $\hat{a}$, $\hat{a}_i=-a_{-i+1}$ and observe that $\hat{\hat{a}}=a$. There is a natural \emph{shift operation} $\tau$ on such parameter sequences, defined by $(\tau a)_i=a_{i+1}$.

Define $\mZ[a]$ to be the subring of $\Bbbk$ generated by the elements $a_i$ for all $i$.

\begin{definition}
Define $\Sym^a=\Bbbk\langle S_{1;a},S_{2;a},\ldots\rangle$, the algebra of \emph{noncommutative $a$-shifted symmetric functions}. 
Note that $\Sym^a$ is a free graded associative $\Bbbk$-algebra, with generators $S_{k;a}$ of degree $k$. The generators $S_{k;a}$ are the \emph{$a$-shifted complete homogeneous symmetric functions}.

We often also consider the free associative $\mZ[a]$-algebra $\Sym^a_{\mZ}=\mZ[a]\langle S_{1;a},S_{2;a},\ldots\rangle$. Clearly, $\Sym^a=\Bbbk\otimes_{\mZ[a]}\Sym_{\mZ[a]}^a$.
\end{definition}

\begin{example}
Important cases to consider are when $a_i=0$ for all $i\in \mZ$ which gives the ring of noncommutative symmetric functions $\Sym$, referred to as the \emph{unshifted case}, and the case where $a_i=i-1$ giving a ring of noncommutative shifted symmetric functions $\Sym^*$, analogue to the one studied in \cite{OO} in the commutative case. 
\end{example}

\begin{definition}\label{maindef}
Define the  \emph{$a$-shifted elementary symmetric functions} $\Lambda_{k;a}$. We denote the generating series of the $S_{k;a}$, respectively $\Lambda_{k;a}$, by
\begin{align}
\sigma^{a}(t)&:=1+\sum_{k=1}^{\infty}{\frac{S_{k;a}}{\spower{t}{a}^k}},
\label{Eka}
\\
\lambda^{\hat{a}}(t)&:=1+\sum_{k=1}^{\infty}{\frac{\Lambda_{k; a}}{{\spower{t}{\hat{a}}^k}}},
\label{Lka}
\end{align}
and require the defining equations
\begin{align}\label{powersumequation}
\lambda^{\hat{a}}(-t)\sigma^{a}(t)=\sigma^{a}(t)\lambda^{\hat{a}}(-t)=1.
\end{align}
We adapt the convention that $S_{0;a}=1=\Lambda_{0;a}$. 
\end{definition}

Eq. (\ref{powersumequation}) replaces the unshifted equations from \cite{Getal}*{Section 3.1}.

\begin{example}\label{expl1}
The unshifted symmetric functions are recovered as $S_{k;0}=S_k$ and $\Lambda_{k;0}=\Lambda_k$, for $a=0=(0,0,\ldots)$. 
\end{example}

\begin{example}\label{expl2}
We are especially interested in the case where $a_i=i-1$ for $i\in \mZ$, giving the ring $\Sym^*$ of \emph{noncommutative shifted symmetric functions}. In this case, $\hat{a}_i=i$, and we denote $S_{k;a}=S^*_k$ and $\Lambda_{k;a}=\Lambda^*_k$. This gives noncommutative analogues of the generating series $H^*(u)$ and $E^*(u)$ from \cite{OO}*{12.2}.

Denote the shifted powers in this case by 
$\opower{x}{k}=x(x-1)\ldots (x-k+1)$, and $\opower{x}{0}=1$, which are called \emph{falling powers}.
The corresponding generating series are denoted by
\begin{align}
\sigma^{*}(t)&:=1+\sum_{k=1}^{\infty}{\frac{S^*_{k}}{\opower{t}{k}}},\label{Ekstar}\\
\lambda^{*}(t)&:=1+\sum_{k=1}^{\infty}{\frac{\Lambda^*_{k}}{\opower{t}{k}}}.\label{Lkstar}
\end{align}
\end{example}

The following analogue of \cite{OO}*{Corollary~12.3} holds using these definitions.

\begin{lemma}
The defining identities for the generating series of $\Sym^*$ are
\begin{equation}\label{serieseq}
\sigma^{*}(t)\lambda^{*}(-t-1)=1, \qquad \lambda^{*}(-t-1)\sigma^{*}(t)=1.
\end{equation}
\end{lemma}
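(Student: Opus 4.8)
The plan is to deduce \eqref{serieseq} directly from the general defining equation \eqref{powersumequation} by specializing the parameters to $a_i=i-1$ and carefully tracking the two families of falling powers entering $\sigma^{*}$ and $\lambda^{*}$, which turn out to differ by a unit shift in the argument. Throughout I would write $\opower{x}{k}=x(x-1)\cdots(x-k+1)$ for the ordinary falling power.

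First I would check the series $\sigma^{a}$. For $a_i=i-1$ the $a$-shifted power is $\spower{t}{a}^k=(t-0)(t-1)\cdots(t-(k-1))=\opower{t}{k}$, so the denominators in \eqref{Eka} and \eqref{Ekstar} agree; together with $S_{k;a}=S^{*}_{k}$ this gives $\sigma^{a}(t)=\sigma^{*}(t)$ with nothing further to verify.

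The only real content lies in comparing $\lambda^{\hat{a}}$ of \eqref{Lka} with $\lambda^{*}$ of \eqref{Lkstar}. The crucial observation is that \eqref{Lka} is written using the \emph{dual} parameters $\hat{a}$ in its denominators, whereas \eqref{Lkstar} uses the ordinary falling power $\opower{t}{k}$, i.e. the \emph{undualized} parameters. Since $\hat{a}_i=i$ here (Example \ref{expl2}), the denominator in \eqref{Lka} is $\spower{t}{\hat{a}}^k=(t-1)(t-2)\cdots(t-k)$, and I would note that this equals $\opower{(t-1)}{k}$. Because moreover $\Lambda_{k;a}=\Lambda^{*}_{k}$, a term-by-term comparison yields the shift identity
\begin{equation}\label{shiftid}
\lambda^{\hat{a}}(s)=\lambda^{*}(s-1).
\end{equation}

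With \eqref{shiftid} in hand the lemma is immediate: substituting $s=-t$ gives $\lambda^{\hat{a}}(-t)=\lambda^{*}(-t-1)$, and inserting this together with $\sigma^{a}(t)=\sigma^{*}(t)$ into the two halves $\sigma^{a}(t)\lambda^{\hat{a}}(-t)=1$ and $\lambda^{\hat{a}}(-t)\sigma^{a}(t)=1$ of \eqref{powersumequation} produces exactly the two identities in \eqref{serieseq}. The main, and essentially only, obstacle is the bookkeeping behind \eqref{shiftid}: one must resist reading $\lambda^{*}$ as literally equal to $\lambda^{\hat{a}}$, since the former is expressed through the $a$-falling powers and the latter through the $\hat{a}$-falling powers, and it is exactly this mismatch that converts the reflection $t\mapsto -t$ in \eqref{powersumequation} into the shifted argument $-t-1$ appearing in \eqref{serieseq}.
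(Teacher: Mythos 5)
Your proposal is correct and follows essentially the same route as the paper's proof: the key step in both is recognizing that since $\hat{a}_i=i$, the denominators $\spower{-t}{\hat{a}}^k=(-t-1)(-t-2)\cdots(-t-k)$ are exactly the falling powers $\opower{(-t-1)}{k}$, so that $\lambda^{\hat{a}}(-t)=\lambda^{*}(-t-1)$, after which Eq.~(\ref{powersumequation}) gives both identities at once. Your formulation via the general shift identity $\lambda^{\hat{a}}(s)=\lambda^{*}(s-1)$ is just a mildly more explicit packaging of the same computation.
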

\begin{proof}
Equation (\ref{Lka}) is rewritten as 
\begin{align*}
\lambda^{\hat{a}}(-t)&=1+\sum_{k=1}^{\infty}{\frac{\Lambda^*_{k}}{(-t-1)(-t-2)\ldots(-t-k)}}=1+\sum_{k=1}^{\infty}{\frac{\Lambda^*_{k}}{\opower{(-t-1)}{k}}}=\lambda^*(-t-1).
\end{align*}
Hence the defining relation from Eq. (\ref{powersumequation}) implies the claim.
\end{proof}

\begin{example}\label{expl3}
Set $a_i:=i-1/2$, so that $\hat{a}_i=a_i$. This choice gives what can be viewed as a noncommutative analogue of \emph{Frobenius--Schur functions} as considered, in the commutative case, in \cite{ORV2}*{Section \S 2}. However, Frobenius--Schur functions use a super-realization of the ring of symmetric functions, which appears not to be available in the noncommutative literature to the knowledge of the authors.

It was shown in \cite{ORV2}*{Section \S 2}, for $a_i=i-1/2$ in the super-realization, that
$\sigma^a(t)=\sigma(1/t)$, where the latter is the generating series for the unshifted symmetric functions, specialized to $\prod_{i}(1-x_i/t)/(1+y_i/t)$.
\end{example}


\begin{lemma}
As for the $S_{k;a}$, the set of all monomials in
$\lbrace \Lambda_{k;a}\mid k\geq 0\rbrace$ forms a $\mZ[a]$-basis for $\Sym^a_{\mZ}$.
\end{lemma}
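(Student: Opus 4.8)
The plan is to show that the $\mZ[a]$-algebra endomorphism $\varphi\colon\Sym^a_{\mZ}\to\Sym^a_{\mZ}$ determined by $S_{k;a}\mapsto\Lambda_{k;a}$ (which exists and is unique since $\Sym^a_{\mZ}$ is free on the $S_{k;a}$) is in fact an \emph{automorphism}; then $\varphi$ carries the free generating set $\lbrace S_{k;a}\rbrace$ onto $\lbrace \Lambda_{k;a}\rbrace$, so the latter is again a free generating set and its monomials form a $\mZ[a]$-basis. To run this argument I would equip $\Sym^a_{\mZ}$ with the filtration $F_n=\bigoplus_{m\le n}(\Sym^a_{\mZ})_m$ coming from the grading $\deg S_{k;a}=k$ (with $\deg a_i=0$), for which $\gr\Sym^a_{\mZ}$ is canonically identified with the unshifted algebra $\Sym_{\mZ[a]}=\mZ[a]\langle S_1,S_2,\ldots\rangle$.

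The first step is to check that $\Lambda_{k;a}\in F_k$ and to identify its top-degree component $\operatorname{symb}(\Lambda_{k;a})\in(\Sym^a_{\mZ})_k$. I would extract relations from the defining identity $\sigma^{a}(t)\lambda^{\hat a}(-t)=1$ of Definition \ref{maindef} by expanding both factors as power series in $t^{-1}$: since $\spower{t}{a}^k$ is a monic polynomial of degree $k$ in $t$, one has $1/\spower{t}{a}^k=t^{-k}+O(t^{-k-1})$, while using $\hat a_i=-a_{-i+1}$ one computes $\spower{-t}{\hat a}^k=(-1)^k\prod_{j=0}^{k-1}(t-a_{-j})$, whence $1/\spower{-t}{\hat a}^k=(-1)^k t^{-k}+O(t^{-k-1})$. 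Comparing the coefficient of $t^{-n}$ yields, for $n\ge 1$, a recursion expressing $\Lambda_{n;a}$ as a $\mZ[a]$-polynomial in $S_{1;a},\ldots,S_{n;a}$ and $\Lambda_{1;a},\ldots,\Lambda_{n-1;a}$ of total degree $\le n$; by induction $\Lambda_{n;a}\in F_n$. Passing to the top filtration degree kills every contribution of the shift parameters, as they appear only in the $O(t^{-k-1})$ tails, so the relation collapses to the unshifted identity $\sum_{i+j=n}(-1)^{j}S_i\Lambda_j=0$. Hence $\operatorname{symb}(\Lambda_{k;a})$ equals the unshifted elementary symmetric function $\Lambda_k$ expressed in the $S_{j;a}$.

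It then follows that $\gr\varphi$ is precisely the unshifted substitution $S_k\mapsto\Lambda_k$ on $\gr\Sym^a_{\mZ}=\Sym_{\mZ[a]}$. Since $\Sym=\Bbbk\langle\Lambda_1,\Lambda_2,\ldots\rangle$ is free on the elementary symmetric functions, the corresponding statement holds over $\mZ$ and, after the flat base change $\mZ\to\mZ[a]$, over $\mZ[a]$; thus $\gr\varphi$ is an automorphism. Because the filtration is exhaustive, separated, and bounded below with each $(\Sym^a_{\mZ})_m$ a finitely generated free $\mZ[a]$-module, a filtered endomorphism inducing an isomorphism on the associated graded is itself an isomorphism (induct on $n$ using the short exact sequences $0\to F_{n-1}\to F_n\to\gr_n\to 0$). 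Therefore $\varphi$ is an automorphism, proving the claim. The main obstacle is the symbol computation of the second paragraph: one must track the bookkeeping of the dual parameters $\hat a$ and of the substitution $t\mapsto -t$ carefully enough to be certain that the shift parameters contribute nothing in top degree and that the surviving relation is exactly the unshifted one.
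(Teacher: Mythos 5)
Your proof is correct, but it does not coincide with anything in the paper for the simple reason that the paper states this lemma with no proof at all; the intended justification is the same triangularity reasoning the paper spells out for the analogous basis claims later (the ribbon Schur basis in Theorem~\ref{products} and the freeness of the power sums): expanding the defining relation Eq.~(\ref{powersumequation}) exhibits each $\Lambda_{n;a}$ as a $\mZ[a]$-polynomial in $S_{1;a},\ldots,S_{n;a}$ in which $S_{n;a}$ occurs with coefficient $(-1)^{n-1}$ (cf.\ Proposition~\ref{transitionprop}), and conversely, so the two families of monomials are related by a unitriangular $\mZ[a]$-change of basis. Your route is genuinely different in packaging: you pass to the associated graded of the degree filtration, where the parameters $a$ become invisible --- they enter only through the tails $O(t^{-k-1})$ of $1/\spower{t}{a}^k$ and $1/\spower{-t}{\hat{a}}^k$ --- so the defining relation degenerates to the unshifted identity $\sum_{i+j=n}(-1)^jS_i\Lambda_j=0$, and you then import from \cite{Getal} that the $\Lambda_k$ freely generate $\Sym_{\mZ}$ and finish with the standard fact that a filtered endomorphism inducing an isomorphism on $\gr$ is an isomorphism. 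The delicate points all check out: the coefficient of $t^{-n}$ yields a recursion in which $\Lambda_{n;a}$ appears exactly once, with unit coefficient $(-1)^n$, so the induction showing $\Lambda_{n;a}\in F_n$ and the symbol identification close over $\mZ[a]$, and the five-lemma argument on $0\to F_{n-1}\to F_n\to\gr_n\to 0$ needs only that the filtration is exhaustive and bounded below. What your approach buys is conceptual clarity --- it isolates exactly why freeness is insensitive to the shift parameters, with no monomial-order bookkeeping --- at the cost of invoking the unshifted result as a black box; the paper's (implicit) approach is more self-contained and produces the explicit base-change formulas (Lemma~\ref{sumlemma}, Proposition~\ref{transitionprop}) that are needed later anyway. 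You might also note an even shorter argument latent in the paper: the anti-isomorphism $\omega_{\hat{a}}\colon\Sym^{\hat{a}}\to\Sym^{a}$ of Proposition~\ref{involution} carries the free generators $S_{k;\hat{a}}$ to $\Lambda_{k;a}$, giving the lemma in one line, though the paper places the lemma before that proposition.
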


We can also adapt the duality isomorphism from \cite{Mol2}*{Section 2.3} to the noncommutative setup, which generalizes the anti-algebra isomorphism from \cite{Getal}*{Section 3}.

\begin{proposition}\label{involution}
There is an anti-algebra\footnote{One can also define an \emph{algebra} isomorphism this way, but in view of Corollary \ref{duality} an anti-algebra morphism is more natural.} isomorphism $\omega_a\colon \Sym^a\to \Sym^{\hat{a}}$, defined by 
\begin{align}
\omega_a(S_{k;a})=\Lambda_{k;\hat{a}},
\end{align}
which satisfies $\omega_{\hat{a}}\omega_a=\ide$.
It follows that 
\begin{align}
\omega_a(\Lambda_{k;a})=S_{k;\hat{a}}.
\end{align}
\end{proposition}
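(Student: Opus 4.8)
The plan is to define $\omega_a$ on the free generators by $\omega_a(S_{k;a})=\Lambda_{k;\hat a}$ and then verify, in turn: that this extends to a well-defined anti-algebra map, that it is invertible with $\omega_{\hat a}\omega_a=\ide$, and that consequently $\omega_a(\Lambda_{k;a})=S_{k;\hat a}$. Since $\Sym^a=\Bbbk\langle S_{1;a},S_{2;a},\ldots\rangle$ is a \emph{free} associative algebra, there is nothing to check for well-definedness beyond declaring the images of the generators: any set map on generators extends uniquely to an anti-algebra homomorphism into any target algebra, here $\Sym^{\hat a}$. So the first step is essentially a formal invocation of the universal property of the free algebra, being careful that ``anti-algebra'' means $\omega_a(xy)=\omega_a(y)\omega_a(x)$, which is exactly what the universal property delivers once we fix generator images.

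The substantive content is the involutivity $\omega_{\hat a}\omega_a=\ide$, and for this the key observation is that the duality is symmetric in the roles of $S$ and $\Lambda$ once one tracks the parameter flip $a\leftrightarrow\hat a$. First I would record the companion statement that $\omega_{\hat a}(S_{k;\hat a})=\Lambda_{k;a}$, which is just the defining formula with $a$ replaced by $\hat a$, using $\hat{\hat a}=a$ (established in Section \ref{sect2.1}). The main obstacle is therefore to show that $\omega_a$ sends the \emph{elementary} generators correctly, i.e. $\omega_a(\Lambda_{k;a})=S_{k;\hat a}$; this does not follow directly from the definition on the $S_{k;a}$ and must be extracted from the defining relation (\ref{powersumequation}). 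The natural route is to pass to generating series: applying $\omega_a$ termwise, and using that $\omega_a$ is an anti-homomorphism that fixes the central scalar coefficients $1/\spower{t}{a}^k$, I would show that $\omega_a$ carries the series $\sigma^a(t)$ to the series $\lambda^{\hat a}(t)$ for the parameters $\hat a$. Because the relation $\lambda^{\hat a}(-t)\sigma^a(t)=\sigma^a(t)\lambda^{\hat a}(-t)=1$ characterizes the $\Lambda_{k;\hat a}$ uniquely in terms of the $S_{k;\hat a}$ (and vice versa), comparing coefficients in the image relation forces $\omega_a(\Lambda_{k;a})=S_{k;\hat a}$.

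Concretely, the last step is an induction on $k$. Equation (\ref{powersumequation}) expands as a recursion expressing $\Lambda_{k;a}$ as a sum of products of the $S_{j;a}$ with the lower $\Lambda_{j;a}$, with scalar coefficients in $\mZ[a]$. Applying the anti-homomorphism $\omega_a$ reverses each product and substitutes $\Lambda_{j;\hat a}$ for each $S_{j;a}$; the hard part is checking that the scalar coefficients, which involve the $a_i$ through the denominators $\spower{t}{a}^k$, transform correctly into the corresponding $\hat a$-coefficients under $a_i\mapsto \hat a_i=-a_{-i+1}$, so that the image recursion is precisely the defining recursion for $S_{k;\hat a}$ in $\Sym^{\hat a}$. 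Granting this, the base case $\omega_a(\Lambda_{1;a})=S_{1;\hat a}$ is immediate from $\Lambda_{1;a}=S_{1;a}$ up to a parameter shift, and the inductive step closes. With $\omega_a(\Lambda_{k;a})=S_{k;\hat a}$ in hand, composing gives $\omega_{\hat a}\omega_a(S_{k;a})=\omega_{\hat a}(\Lambda_{k;\hat a})=S_{k;a}$ on generators, hence $\omega_{\hat a}\omega_a=\ide$ on all of $\Sym^a$, which also shows $\omega_a$ is bijective and completes the proof.
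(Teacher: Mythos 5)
Your proposal is correct and follows essentially the same route as the paper: define $\omega_a$ on the free generators via the universal property, apply the anti-homomorphism to the defining relation (\ref{powersumequation}), invoke uniqueness of the solution of that relation to conclude $\omega_a(\Lambda_{k;a})=S_{k;\hat{a}}$, and then verify $\omega_{\hat{a}}\omega_a=\ide$ on generators using $\hat{\hat{a}}=a$. The coefficient-matching step you flag as ``the hard part'' is handled in the paper by a one-line trick: since $\omega_a$ fixes scalars, one rewrites $\spower{t}{a}^i=\spower{t}{\hat{\hat{a}}}^i$ and interchanges $t$ with $-t$ in the image relation, which exhibits it verbatim as the defining relation of $\Sym^{\hat{a}}$.
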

\begin{proof}
Applying $\omega_a$ to the defining relations of Eq. (\ref{powersumequation}), we see that
\begin{align*}
1=\sum_{i,j\geq 0}\frac{\Lambda_{i;\hat{a}}\omega_a(\Lambda_{j;a})}{\spower{t}{a}^i\spower{-t}{\hat{a}}^j}=\sum_{i,j\geq 0}\frac{\Lambda_{i;\hat{a}}\omega_a(\Lambda_{j;a})}{\spower{t}{\hat{\hat{a}}}^i\spower{-t}{\hat{a}}^j}.
\end{align*}
Interchanging $t$ with $-t$, we see that $S_{k;\hat{a}}$ satisfy the same relations as $\omega_a(\Lambda_{k;a})$. However, these relations determine $S_{k;\hat{a}}$ uniquely, whence $S_{k;\hat{a}}=\omega_a(\Lambda_{k;a})$. Now, the equation $\omega_{\hat{a}}\omega_a=\ide$ is easily checked on the algebraic generators $S_{k;a}$, using again that $\hat{\hat{a}}=a$.
\end{proof}

\subsection{Shift Operations}\label{shiftsection}
Recall that for a sequence $a=(a_i)$, we defined the shifted sequence $\tau a$ by
setting $(\tau a)_i=a_{i+1}.$ 
With this notation, we observe the equality
\begin{equation}\label{spowerrec1}
\frac{1}{\spower{t}{\tau a}^k}=\frac{1}{\spower{t}{a}^k}+\frac{a_{k+1}-a_1}{\spower{t}{a}^{k+1}},
\end{equation}
generalizing \cite{OO}*{Eq. (13.3)}.
Inductively, for $l\geq 0$, we obtain the following formulas:

\begin{lemma}\label{spowerreclemma-a}
For any $k,l\geq 0$, and any sequence $a=(a_i)$, we have
\begin{align}\label{spowerrec2-a}
\frac{1}{\spower{t}{\tau^l a}^k}&=\sum_{\nu=0}^l\frac{1}{\spower{t}{a}^{k+\nu}}\abinom{l}{\nu}_{k}^a,\\
\frac{1}{\spower{t}{\tau^{-l} a}^k}&=\sum_{\nu\geq 0}\frac{1}{\spower{t}{a}^{k+\nu}}\abinom{\nu+l-1}{\nu}_{1-(k+\nu)}^{\tau^{k-l}a},\label{spowerrec2-a2}
\end{align}
where 
\begin{align}\label{abinom}
\abinom{l}{\nu}_{k}^a=\sum_{1\leq s_1<\ldots<s_\nu\leq l}\prod_{i=1}^\nu(a_{k+(\nu-i)+s_i}-a_{s_i}).
\end{align}
\end{lemma}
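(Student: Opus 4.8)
The plan is to prove the two formulas by induction on $l$, using the base recursion in Eq. (\ref{spowerrec1}) as the engine. I would first establish the forward-shift formula (\ref{spowerrec2-a}), since it is the cleaner of the two, and then obtain the backward-shift formula (\ref{spowerrec2-a2}) either by a parallel induction or, more cleverly, by inverting the forward relation. The coefficients $\abinom{l}{\nu}_{k}^a$ defined in Eq. (\ref{abinom}) are elementary-symmetric-type sums in the shifted differences $a_{k+\cdots}-a_{\cdots}$, so I expect the combinatorial heart of the argument to be a Pascal-type recursion that these coefficients satisfy.

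For the forward formula, the base case $l=0$ is immediate since both sides reduce to $1/\spower{t}{a}^k$ and the only surviving term is $\nu=0$ with $\abinom{0}{0}_k^a=1$ (the empty sum/product). For the inductive step, I would write $\tau^{l+1}a=\tau(\tau^l a)$ and apply Eq. (\ref{spowerrec1}) with $a$ replaced by $\tau^l a$ to each term of the inductive hypothesis, obtaining
\begin{align*}
\frac{1}{\spower{t}{\tau^{l+1}a}^k}
=\sum_{\nu=0}^{l}\left(\frac{1}{\spower{t}{a}^{k+\nu}}+\frac{a_{k+\nu+1}-a_1}{\spower{t}{a}^{k+\nu+1}}\right)\abinom{l}{\nu}_k^a.
\end{align*}
Collecting the coefficient of $1/\spower{t}{a}^{k+\nu}$ then forces the identity
\begin{equation*}
\abinom{l+1}{\nu}_k^a=\abinom{l}{\nu}_k^a+(a_{k+\nu}-a_1)\,\abinom{l}{\nu-1}_k^a,
\end{equation*}
which I would verify directly from the definition (\ref{abinom}) by splitting the index set $\{1\le s_1<\cdots<s_\nu\le l+1\}$ according to whether the largest index $s_\nu$ equals $l+1$ or not. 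The subtle point is the index bookkeeping: one must check that the factor picked up when $s_\nu=l+1$ is exactly $a_{k+\nu}-a_1$, which matches because the product term indexed by $i=\nu$ (so $s_i=l+1$, meaning the exponent on $a$ is $k+0+s_\nu$) contributes $a_{k+l+1}-a_{l+1}$ — and here I would need to be careful to reconcile the shift in the parameter sequence with the shift in the summation index, since the recursion Eq. (\ref{spowerrec1}) applied to $\tau^l a$ produces $(\tau^l a)_{k+\nu+1}-(\tau^l a)_1=a_{k+\nu+l+1}-a_{l+1}$, not $a_{k+\nu+1}-a_1$.

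For the backward formula (\ref{spowerrec2-a2}), the cleanest approach is to note that $\tau^{-l}$ and $\tau^{l}$ are mutually inverse shifts, so I would substitute $a\mapsto \tau^{-l}a$ into the forward formula and treat the result as a linear system that can be inverted to express $1/\spower{t}{\tau^{-l}a}^k$ in terms of the $1/\spower{t}{a}^{k+\nu}$. Alternatively — and this is likely more direct — I would run an independent induction using the inverted base recursion obtained from Eq. (\ref{spowerrec1}), namely
\begin{equation*}
\frac{1}{\spower{t}{a}^{k+1}}=\frac{1}{a_{k+1}-a_1}\left(\frac{1}{\spower{t}{\tau a}^k}-\frac{1}{\spower{t}{a}^k}\right),
\end{equation*}
and verify that the coefficients $\abinom{\nu+l-1}{\nu}_{1-(k+\nu)}^{\tau^{k-l}a}$ satisfy the corresponding recursion; the appearance of $\tau^{k-l}a$ and the shifted lower index $1-(k+\nu)$ reflects exactly this inversion, where negative-type differences of the form $(a_i-a_j)^{-1}$ get reorganized into the binomial coefficients with the dual shift. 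The main obstacle throughout will be managing these simultaneous shifts in both the parameter sequence and the binomial indices; I would handle this by carefully tracking, at each inductive step, how $\tau$ acts on the argument of $\abinom{\cdot}{\cdot}^{\cdot}_{\cdot}$, and by separately isolating the combinatorial recursion for the coefficients from the analytic recursion (\ref{spowerrec1}) for the shifted powers, so that the two can be matched term by term.
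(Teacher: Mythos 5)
Your high-level strategy --- induction on $l$ powered by Eq.~(\ref{spowerrec1}) plus a Pascal-type recursion for the coefficients verified directly from Eq.~(\ref{abinom}) --- is exactly the paper's approach, but your execution of the inductive step does not cohere, and the coefficient recursion you propose to verify is false for general $a$. You announce the decomposition $\tau^{l+1}a=\tau(\tau^l a)$, yet your displayed expansion carries the factors $a_{k+\nu+1}-a_1$, which arise from applying Eq.~(\ref{spowerrec1}) with the \emph{original} sequence $a$ at index $k+\nu$, i.e.\ from the other decomposition $\tau^{l+1}a=\tau^{l}(\tau a)$. With that decomposition the induction hypothesis must be applied to the sequence $\tau a$, so the coefficients in your display must read $\abinom{l}{\nu}_{k}^{\tau a}$, not $\abinom{l}{\nu}_{k}^{a}$, and matching coefficients then forces the \emph{mixed} recursion
\begin{equation*}
\abinom{l}{\nu}_{k}^{\tau a}+(a_{k+\nu}-a_1)\abinom{l}{\nu-1}_{k}^{\tau a}=\abinom{l+1}{\nu}_{k}^{a}
\end{equation*}
(this is the paper's route; it follows from Eq.~(\ref{abinom}) by splitting the sum on whether $s_1=1$), rather than the pure-$a$ recursion you state. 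Indeed your recursion $\abinom{l+1}{\nu}_k^a=\abinom{l}{\nu}_k^a+(a_{k+\nu}-a_1)\abinom{l}{\nu-1}_k^a$ already fails at $\nu=1$: the left side minus $\abinom{l}{1}_k^a$ equals $a_{k+l+1}-a_{l+1}$, which differs from $a_{k+1}-a_1$ unless the sequence is equidistant --- defeating the point of the multiparameter lemma. You noticed this mismatch yourself at the end of your paragraph but left it unresolved; that unresolved contradiction is a genuine gap, not a bookkeeping footnote.

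The repair is to commit to one consistent decomposition. Either follow the paper: apply the induction hypothesis to $\tau a$ and prove the mixed $a$/$\tau a$ recursion above. Or carry out your own decomposition $\tau^{l+1}a=\tau(\tau^{l}a)$ consistently: apply Eq.~(\ref{spowerrec1}) with sequence $\tau^{l}a$ (producing the factor $a_{k+l+1}-a_{l+1}$ you correctly computed), then apply the induction hypothesis \emph{twice}, at lower indices $k$ and $k+1$, which reduces the step to the recursion
\begin{equation*}
\abinom{l+1}{\nu}_k^a=\abinom{l}{\nu}_k^a+(a_{k+l+1}-a_{l+1})\abinom{l}{\nu-1}_{k+1}^a,
\end{equation*}
proved precisely by your proposed split on whether $s_\nu=l+1$; note the shifted lower index $k+1$ in the last term, which your version omits. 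Finally, for Eq.~(\ref{spowerrec2-a2}) your ``inverted recursion'' divides by $a_{k+1}-a_1$, which can vanish (e.g.\ for $a=0$), so that inversion is not available for arbitrary sequences; a direct induction parallel to the forward case, as the paper indicates, avoids this.
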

\begin{proof}
For $k\geq 0$, we prove Equation (\ref{spowerrec2-a}) by induction on $l$.
The case $l=0$ is clear. Assume the statement holds for $l\geq 0$.
\begin{align*}
\frac{1}{\spower{t}{\tau^{l+1}a}^{k}}&=\sum_{\nu=0}^l \frac{1}{\spower{t}{\tau a}^{k+\nu}}\abinom{l}{\nu}_{k}^{\tau a}\\
&=\sum_{\nu=0}^l \left(\frac{1}{\spower{t}{a}^{k+\nu}}+\frac{a_{k+\nu+1}-a_1}{\spower{t}{a}^{k+\nu+1}}\right)\abinom{l}{\nu}_{k}^{\tau a}\\
&=\frac{1}{\spower{t}{a}^k}+\sum_{\nu=1}^l\frac{1}{\spower{t}{a}^{k+\nu}}\left(\abinom{l}{\nu}^{\tau a}_k+(a_{k+\nu}-a_1)\abinom{l}{\nu-1}^{\tau a}_k\right)+\frac{a_{k+l+1}-a_1}{\spower{t}{a}^{k+l+1}}\abinom{l}{l}_k^{\tau a}\\
&=\sum_{\nu=0}^{l+1}\frac{1}{\spower{t}{\tau a}^{k+\nu}}\abinom{l+1}{\nu}_k^{a}.
\end{align*}
The last step uses the formula
\begin{align}
\abinom{l}{\nu}^{\tau a}_k+(a_{k+\nu}-a_1)\abinom{l}{\nu-1}^{\tau a}_k=\abinom{l+1}{\nu}_k^{a},
\end{align}
which is directly verified using the definition in Eq. (\ref{abinom}). Equation (\ref{spowerrec2-a2}) is proved similarly.
\end{proof}

We remark that by \cite{ORV2}*{Lemma 2.5}, the coefficients $\abinom{l}{\nu}_k^a$ can be understood as super-realizations of the complete homogeneous symmetric functions, evaluated at certain entries of the parameter sequence $a$. More precisely,
it follows from \cite{ORV2}*{Lemma 2.5} that 
\begin{equation}\label{abinomexplained}
\abinom{n}{\nu}_{k}^a=h_{\nu}(a_{s-1},\ldots,a_{s+k};-a_1,\ldots, -a_{\nu+k-1}).
\end{equation}

\begin{definition}
For $s\in \mZ$, we define the \emph{shifts} $S_{k;a}^{[s]}$, $\Lambda_{k;a}^{[s]}$ by  
\begin{equation}\label{phidef}
\sum_{k\geq 0}\frac{S_{k;a}^{[s]}}{\spower{t}{\tau^{-s}a}^k}=\sum_{k\geq 0}\frac{S_{k;a}}{\spower{t}{a}^k}, \qquad\qquad \sum_{k\geq 0}\frac{\Lambda_{k;a}^{[s]}}{\spower{t}{\tau^{s}\hat{a}}^k}=\sum_{k\geq 0}\frac{\Lambda_{k;a}}{\spower{t}{\hat{a}}^k}.
\end{equation}
\end{definition}

\begin{lemma}\label{shiftauto}
The assignment $\phi^{[s]}(S_{k;a})=S_{k;a}^{[s]}$, extends to an algebra automorphism $\phi^{[s]}$ of $\Sym^a$ such that $\phi^{[s]}(\Lambda_{k;a})=\Lambda_{k;a}^{[s]}.$
\end{lemma}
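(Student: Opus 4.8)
The plan is to establish the three assertions of the lemma—that $\phi^{[s]}$ exists as an algebra map, that it is bijective, and that it carries $\Lambda_{k;a}$ to $\Lambda_{k;a}^{[s]}$—one at a time. First I would check that each $S_{k;a}^{[s]}$ defined by (\ref{phidef}) is a genuine element of $\Sym^a$. Writing $a=\tau^{s}(\tau^{-s}a)$ and applying the finite expansion (\ref{spowerrec2-a}) of Lemma \ref{spowerreclemma-a} to the pair $\tau^{-s}a$, $a$ (equivalently, matching coefficients in the Laurent expansion at $t=\infty$), one obtains $S_{k;a}^{[s]}=S_{k;a}+\sum_{j<k}c_{kj}S_{j;a}$ with $c_{kj}\in\mZ[a]$ of $a$-degree $k-j$; the sum is finite and $S_{0;a}^{[s]}=1$. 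Since $\Sym^a=\Bbbk\langle S_{1;a},S_{2;a},\ldots\rangle$ is free on the $S_{k;a}$, the assignment $S_{k;a}\mapsto S_{k;a}^{[s]}$ extends uniquely to an algebra endomorphism $\phi^{[s]}$.

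For bijectivity I would use a filtered argument rather than attempt to exhibit an explicit inverse. Let $F_n=\bigoplus_{m\le n}\Sym^a_m$ be the filtration by total degree, so that each $F_n$ is finite dimensional over $\Bbbk$. The expansion above shows $\phi^{[s]}(S_{k;a})\in F_k$ with $\phi^{[s]}(S_{k;a})\equiv S_{k;a}\pmod{F_{k-1}}$; since $\phi^{[s]}$ is multiplicative it preserves the filtration and induces the identity on the associated graded $\gr\Sym^a$. A filtered endomorphism inducing the identity on $\gr\Sym^a$ is invertible on each finite-dimensional $F_n$, hence on all of $\Sym^a$, so $\phi^{[s]}$ is an automorphism. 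I note that one should \emph{not} expect $\phi^{[-s]}$ to be the inverse, since the transition coefficients $\abinom{l}{\nu}^a_k$ of Lemma \ref{spowerreclemma-a} are not invariant under shifting the base sequence, so the $\phi^{[s]}$ need not compose as a group action.

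The remaining identity $\phi^{[s]}(\Lambda_{k;a})=\Lambda_{k;a}^{[s]}$ is the heart of the lemma and the step I expect to be the main obstacle. Because $\phi^{[s]}$ is an algebra map fixing the scalar series $1/\spower{t}{b}^k$, it acts on any generating series coefficientwise; applying it to (\ref{powersumequation}) gives $\bigl(\sum_k \phi^{[s]}(\Lambda_{k;a})/\spower{-t}{\hat a}^k\bigr)\bigl(\sum_k S_{k;a}^{[s]}/\spower{t}{a}^k\bigr)=1$. On the other hand, substituting (\ref{phidef}) into (\ref{powersumequation}) and using the key duality $\widehat{\tau^{-s}a}=\tau^s\hat a$ shows that the pair $(S_{k;a}^{[s]},\Lambda_{k;a}^{[s]})$ satisfies the same relation, but expanded over the shifted bases $\tau^{-s}a$ and $\tau^s\hat a$. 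The difficulty is precisely that these two incarnations live over different bases: $\phi^{[s]}$ produces expansions over $a$ and $\hat a$, whereas $\Lambda_{k;a}^{[s]}$ is defined over $\tau^{-s}a$ and $\tau^s\hat a$. To reconcile them I would identify $\phi^{[s]}(\sigma^a(t))$ with the reindexing of $\sigma^a(t)$ that carries $\tau^{-s}a$-coordinates into $a$-coordinates, transport this through the inversion $\lambda^{\hat a}(t)=\sigma^a(-t)^{-1}$ by means of $\widehat{\tau^{-s}a}=\tau^s\hat a$, and finally read off $\phi^{[s]}(\Lambda_{k;a})=\Lambda_{k;a}^{[s]}$ from the uniqueness of the expansion of $\lambda^{\hat a}(t)$ in the basis $\lbrace 1/\spower{t}{\hat a}^k\rbrace$. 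Carrying out this reconciliation cleanly is where the transition formulas of Lemma \ref{spowerreclemma-a} must be coupled with the passage to dual parameters, and it is the point at which the precise arithmetic of the shift sequence $a$ is used to guarantee that the complete-homogeneous and elementary sides transform compatibly.
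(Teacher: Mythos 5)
Your first two paragraphs are sound, and in fact supply more than the paper does: the paper's proof is the freeness observation plus one sentence, and never addresses bijectivity, whereas your unitriangularity/associated-graded argument settles it (your caution that $\phi^{[-s]}$ need not invert $\phi^{[s]}$ is also well taken). The genuine gap is in your third paragraph, which is the only nontrivial assertion of the lemma. You correctly isolate the two series identities in play,
\begin{equation*}
\Bigl(1+\textstyle\sum_{j}\phi^{[s]}(\Lambda_{j;a})/\spower{-t}{\hat a}^j\Bigr)\Bigl(1+\textstyle\sum_{i}S_{i;a}^{[s]}/\spower{t}{a}^i\Bigr)=1,
\qquad
\Bigl(1+\textstyle\sum_{j}\Lambda_{j;a}^{[s]}/\spower{-t}{\tau^{s}\hat a}^j\Bigr)\Bigl(1+\textstyle\sum_{i}S_{i;a}^{[s]}/\spower{t}{\tau^{-s}a}^i\Bigr)=1,
\end{equation*}
the first from applying $\phi^{[s]}$ to Eq.~(\ref{powersumequation}), the second from Eq.~(\ref{phidef}) together with $\widehat{\tau^{-s}a}=\tau^{s}\hat a$, and you correctly observe that they are inverses of \emph{different} series over \emph{different} bases. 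But you then only say you ``would identify'' and ``transport'' one into the other; that reconciliation is never carried out, so the identity $\phi^{[s]}(\Lambda_{k;a})=\Lambda_{k;a}^{[s]}$ is never proved. This is a missing proof, not a missing detail.

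Moreover, the reconciliation cannot be carried out for arbitrary $a$, so your suspicion that ``the precise arithmetic of the shift sequence'' must enter is exactly right: the step (and hence the lemma in its stated generality) fails without a hypothesis on $a$. Concretely, take $k=2$, $s=1$. From Eq.~(\ref{phidef}) one computes $S_{1;a}^{[1]}=S_{1;a}$, $S_{2;a}^{[1]}=S_{2;a}+(a_1-a_0)S_{1;a}$ and $\Lambda_{2;a}^{[1]}=\Lambda_{2;a}+(a_{-1}-a_0)\Lambda_{1;a}$, while expanding Eq.~(\ref{powersumequation}) as a Laurent series in $1/t$ gives, in the coefficient of $1/t^2$, the relation $a_1S_{1;a}+S_{2;a}-a_0\Lambda_{1;a}+\Lambda_{2;a}-S_{1;a}\Lambda_{1;a}=0$, i.e. $\Lambda_{2;a}=S_{1;a}^2-S_{2;a}-(a_1-a_0)S_{1;a}$ (this agrees with Proposition \ref{transitionprop} at $n=2$; the Example following it, with $[-1]$ shifts, is inconsistent with the Proposition and with this direct expansion). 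Hence
\begin{equation*}
\phi^{[1]}(\Lambda_{2;a})-\Lambda_{2;a}^{[1]}=\bigl(2a_0-a_1-a_{-1}\bigr)S_{1;a},
\end{equation*}
which vanishes only when $a_{-1},a_0,a_1$ are in arithmetic progression; so the claim holds under an equidistance-type hypothesis (cf.\ Assumption \ref{equiass}) but not for general $a$. For comparison, the paper's own proof hides this same gap: the sentence ``the defining relations are preserved \ldots as $\widehat{\tau^{-s}a}=\tau^{s}\hat a$'' establishes only the second display above, i.e. that the pair $(S_{k;a}^{[s]},\Lambda_{k;a}^{[s]})$ satisfies the defining relation of $\Sym^{\tau^{-s}a}$. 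That yields an isomorphism $\Sym^{\tau^{-s}a}\to\Sym^{a}$ sending $S_{k;\tau^{-s}a}\mapsto S_{k;a}^{[s]}$ and $\Lambda_{k;\tau^{-s}a}\mapsto\Lambda_{k;a}^{[s]}$, but it does not perform the change of base you isolated, and so does not prove the stated automorphism property either.
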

\begin{proof}
The $\Bbbk$-algebra $\Sym^a$ is free, so we can extend the definition of $\phi^{[s]}$ to monomials in $S_{k;a}$. The defining relations from Eq. (\ref{powersumequation}) are preserved under  $\phi^{[s]}(\Lambda_{k;a})=\Lambda_{k;a}^{[s]}$. This follows as $\widehat{\tau^{-s} a}=\tau^{s}\hat{a}$.
\end{proof}

The following Lemma gives explicit formulas for shifts of the complete homogeneous and elementary noncommutative $a$-shifted symmetric functions.

\begin{lemma}\label{sumlemma} For any $k,s\geq 0$,
\begin{align}\label{posshift}
S_{k;a}^{[s]}&=\sum_{\nu=0}^{k-1}\abinom{s}{\nu}_{k-\nu}^{\tau^{-s}a}S_{k-\nu;a},&\Lambda_{k;a}^{[s]}&=\sum_{\nu=0}^{k-1} \abinom{\nu+s-1}{\nu}_{1-k}^{\tau^{k-\nu}\hat{a}}\Lambda_{k-\nu;a},\\
S_{k;a}^{[-s]}&=\sum_{\nu=0}^{k-1} \abinom{\nu+s-1}{\nu}_{1-k}^{\tau^{k-\nu}a}S_{k-\nu;a}, &\Lambda_{k;a}^{[-s]}&=\sum_{\nu=0}^{k-1}\abinom{s}{\nu}_{k-\nu}^{\tau^{-s}\hat{a}}\Lambda_{k-\nu;a}.\label{negshift}
\end{align}
\end{lemma}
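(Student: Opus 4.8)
The plan is to derive these four formulas directly from the definitions in Equation (\ref{phidef}) by substituting the expansions of Lemma \ref{spowerreclemma-a}. Consider the first formula for $S_{k;a}^{[s]}$. By definition, the shifts $S_{k;a}^{[s]}$ are the unique coefficients satisfying
\begin{equation*}
\sum_{k\geq 0}\frac{S_{k;a}^{[s]}}{\spower{t}{\tau^{-s}a}^k}=\sum_{k\geq 0}\frac{S_{k;a}}{\spower{t}{a}^k}.
\end{equation*}
The idea is to take the left-hand side and re-expand each term $1/\spower{t}{\tau^{-s}a}^{k}$ in the basis $\{1/\spower{t}{a}^{m}\}_m$ using Equation (\ref{spowerrec2-a2}), applied with $l=s$ and with the base sequence $a$ replaced appropriately so that the shift $\tau^{-s}$ matches. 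After collecting the coefficient of a fixed $1/\spower{t}{a}^{m}$ on both sides and equating, one reads off a relation expressing $S_{k;a}$ in terms of the $S_{k;a}^{[s]}$; inverting this (or, more efficiently, expanding the right-hand side $1/\spower{t}{a}^{k}$ in the basis $\{1/\spower{t}{\tau^{-s}a}^{m}\}$ via Equation (\ref{spowerrec2-a})) yields the stated triangular formula directly.

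First I would fix which direction of Lemma \ref{spowerreclemma-a} to use so that the algebra runs forward rather than requiring an inversion. The cleaner route is to rewrite the \emph{right}-hand side of (\ref{phidef}): expand $1/\spower{t}{a}^{k}=1/\spower{t}{\tau^{-s}(\tau^{s}a)}^{k}$ using Equation (\ref{spowerrec2-a}) with base sequence $\tau^{s}a$ shifted by $\tau^{-s}$ — equivalently, apply the positive-shift expansion (\ref{spowerrec2-a}) to write $1/\spower{t}{\tau^{s}b}^{k}$ in terms of $1/\spower{t}{b}^{k+\nu}$ and then set $b=\tau^{-s}a$ so that $\tau^{s}b=a$. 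This produces
\begin{equation*}
\frac{1}{\spower{t}{a}^{k}}=\sum_{\nu\geq 0}\frac{1}{\spower{t}{\tau^{-s}a}^{k+\nu}}\,\abinom{s}{\nu}_{k}^{\tau^{-s}a}.
\end{equation*}
Substituting this into the right side of (\ref{phidef}), swapping the order of summation, and matching the coefficient of $1/\spower{t}{\tau^{-s}a}^{m}$ against the left side gives $S_{m;a}^{[s]}=\sum_{k+\nu=m}\abinom{s}{\nu}_{k}^{\tau^{-s}a}S_{k;a}$, which after reindexing $k=m-\nu$ is exactly the claimed formula (with the upper limit $\nu\le m-1$ coming from $S_{0;a}=1$ and the vanishing $\abinom{s}{0}_{k}^{\tau^{-s}a}=1$ handling the leading term). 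The second, third, and fourth formulas follow by the same mechanism: the $S^{[-s]}$ formula uses the negative-shift expansion (\ref{spowerrec2-a2}) in place of (\ref{spowerrec2-a}), and the two $\Lambda$ formulas are obtained identically after replacing the generating series $\sigma^{a}(t)$ by $\lambda^{\hat a}(t)$ and the sequence $a$ by $\hat a$, using the second half of definition (\ref{phidef}) together with the identity $\widehat{\tau^{-s}a}=\tau^{s}\hat a$ already recorded in the proof of Lemma \ref{shiftauto}.

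The main obstacle I anticipate is bookkeeping the index shifts inside the coefficients $\abinom{\,\cdot\,}{\,\cdot\,}_{\,\cdot}^{\,\cdot}$ so that the subscript and the superscript sequence come out precisely as stated — in particular verifying that the base-point of the shifted sequence and the subscript of the binomial coefficient (here $k-\nu$, versus $1-k$ in the negative-shift and $\Lambda$ cases) align correctly after the reindexing $k\mapsto k-\nu$. This is a matter of carefully tracking how $\tau^{-s}$ interacts with the definition (\ref{abinom}) of $\abinom{l}{\nu}_k^a$ and with the duality $\hat a$; once the correct specialization of Lemma \ref{spowerreclemma-a} is identified, each identity reduces to equating coefficients of a formal expansion in the linearly independent family $\{1/\spower{t}{b}^{m}\}_m$, so no genuine inequality or convergence issue arises. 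The linear independence of these rational functions (for fixed sequence $b$) is what guarantees uniqueness of the coefficients and hence legitimizes the coefficient-matching step.
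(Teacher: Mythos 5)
Your proposal is correct and takes essentially the same route as the paper: the paper's entire proof is the single sentence that these formulas are a direct consequence of Lemma \ref{spowerreclemma-a}, and the coefficient-matching argument you describe --- expanding the right-hand side of Eq.~(\ref{phidef}) in the linearly independent family $\lbrace 1/\spower{t}{\tau^{-s}a}^{m}\rbrace_{m}$, with the roles of (\ref{spowerrec2-a}) and (\ref{spowerrec2-a2}) interchanged for the $\Lambda$'s because their shift convention in (\ref{phidef}) uses $\tau^{s}\hat{a}$ --- is exactly what that sentence leaves implicit. One minor inaccuracy: the truncation of the sum at $\nu=k-1$ holds because $\abinom{s}{k}_{0}^{\tau^{-s}a}=0$ for $k\geq 1$ (every summand in (\ref{abinom}) with subscript $0$ contains the factor $a_{s_\nu}-a_{s_\nu}=0$), not because $S_{0;a}=1$.
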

\begin{proof}
These formulas are a direct consequence of Lemma \ref{spowerreclemma-a}.
\end{proof}

\begin{example}The leading coefficients $\abinom{s}{0}_k^a$ are always equal to 1. Further, we see that
\begin{align*}
S_{k;a}^{[1]}&=S_{k;a}+(a_{k-1}-a_{0})S_{k-1;a},&\Lambda_{k;a}^{[-1]}&=\Lambda_{k;a}+(a_{1}-a_{k})\Lambda_{k-1;a}.
\end{align*}
\end{example}

\begin{example}
If $a_i=i-1$ as in the ring of shifted symmetric functions, then
\begin{align*}
\abinom{s}{\nu}_{k}^a=\binom{s}{\nu}\opower{(k+\nu-1)}{\nu},
\end{align*}
and we recover the relation of \cite{OO}*{Eq. (13.5)}
$$\phi^{[s]}(S_k^*)=\sum_{\nu=0}^s\binom{s}{\nu}\opower{(k-1)}{\nu}S_{k-\nu}^*.$$
\end{example}

\subsection{Base-Change via Quasideterminants}\label{sect2.3}

In this section, we express the elementary non\-com\-mu\-ta\-tive $a$-shifted symmetric functions $\Lambda_{k;a}$ in terms of (shifts of) the complete homogeneous series $S_{k;a}$ using quasideterminants, and vice versa. This follows the approach of \cite{Getal}*{Section 3}.

\begin{proposition}For $n\geq 0$,
\begin{align}\label{lineareq}
\sum_{i+j=n}(-1)^jS_{i;a}^{[n-1]}\Lambda_{j:a}=\delta_{n,0}.
\end{align}
\end{proposition}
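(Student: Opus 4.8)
The plan is to prove the identity by extracting the coefficient of $1/\spower{t}{a}^n$ from a suitable product of generating series, exactly as one does in the unshifted case in \cite{Getal}. The starting point is the defining relation $\sigma^a(t)\lambda^{\hat a}(-t)=1$ from Eq. (\ref{powersumequation}). The obstruction to reading off coefficients directly is that $\sigma^a(t)$ is expanded in the basis $1/\spower{t}{a}^k$ while $\lambda^{\hat a}(-t)$ is expanded in the basis $1/\spower{-t}{\hat a}^k$; these two families of ``denominators'' do not match term by term, so the product cannot be compared coefficient-by-coefficient in a single basis without first aligning the bases. This is precisely the role of the shift operations defined in Eq. (\ref{phidef}) and computed in Lemma \ref{sumlemma}.

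First I would record the elementary identity relating the two denominator systems, namely
\begin{align*}
\spower{-t}{\hat a}^j=(-t-\hat a_1)\cdots(-t-\hat a_j)=(-1)^j(t+\hat a_1)\cdots(t+\hat a_j)=(-1)^j(t-a_0)(t-a_{-1})\cdots(t-a_{1-j}),
\end{align*}
using $\hat a_i=-a_{-i+1}$. This shows that $1/\spower{-t}{\hat a}^j=(-1)^j/\spower{t}{\tau^{-j}a}^j$ up to the appropriate reindexing, so that the $\Lambda$-series, after the sign $(-1)^j$, lives over denominators $\spower{t}{\tau^{-j}a}^j$. The point of the shift $S_{i;a}^{[s]}$ is exactly to re-expand $S_{i;a}/\spower{t}{a}^i$ over the shifted denominators $\spower{t}{\tau^{-s}a}^i$; so by choosing the shift index to be $s=n-1$, all terms contributing to the coefficient of $1/\spower{t}{a}^n$ can be brought into a common form.

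Concretely, I would multiply out $\sigma^a(t)\lambda^{\hat a}(-t)$, collect the coefficient of the degree-$n$ denominator, and use Eq. (\ref{phidef}) to rewrite each $S_{i;a}$ as the shifted $S_{i;a}^{[n-1]}$ relative to the common denominator dictated by the $\Lambda$-factor of complementary degree $j=n-i$. Because each product $S_{i;a}\cdot\Lambda_{j;a}$ sits over $\spower{t}{a}^i\cdot\spower{-t}{\hat a}^j$, and this combined denominator has total degree $n$ and aligns (after the reindexing above) with $\spower{t}{\tau^{-(n-1)}a}$-type powers, the shift $S^{[n-1]}_{i;a}$ is precisely what converts each summand to a common denominator $\spower{t}{a}^n$. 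Summing over $i+j=n$ then yields $\sum_{i+j=n}(-1)^j S_{i;a}^{[n-1]}\Lambda_{j;a}$ as the coefficient of $1/\spower{t}{a}^n$, while the right-hand side of the defining relation is $1$, contributing $\delta_{n,0}$.

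The main obstacle I anticipate is purely bookkeeping: verifying that the shift index is uniformly $n-1$ for every term in the sum, independent of the split $i+j=n$, and checking that the reindexing of $\hat a$ into shifted copies of $a$ lines up the denominators exactly. I expect this to follow from the identity $\widehat{\tau^{-s}a}=\tau^{s}\hat a$ already used in the proof of Lemma \ref{shiftauto}, together with Lemma \ref{spowerreclemma-a}; once the denominators are matched, the coefficient extraction is immediate and the result follows.
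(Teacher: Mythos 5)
Your overall strategy --- extracting the degree-$n$ coefficient from the defining relation $\sigma^a(t)\lambda^{\hat a}(-t)=1$ after aligning the two denominator systems --- is the same as the paper's, and your reindexing identity $1/\spower{-t}{\hat{a}}^j=(-1)^j/\spower{t}{\tau^{-j}a}^j$, hence $\spower{t}{a}^i\spower{-t}{\hat{a}}^j=(-1)^j\spower{t}{\tau^{-j}a}^{i+j}$, is exactly the paper's first step. However, the execution has a genuine gap at the decisive point. Eq.~(\ref{phidef}) is an identity of \emph{entire series}, not of individual summands: it is false that $S_{i;a}/\spower{t}{a}^i=S_{i;a}^{[n-1]}/\spower{t}{\tau^{1-n}a}^i$, so you cannot ``rewrite each $S_{i;a}$ as the shifted $S_{i;a}^{[n-1]}$'' term by term. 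Nor do the denominators align as you claim: for a split $i+j=n$ the combined denominator is $(-1)^j\spower{t}{\tau^{-j}a}^n$, which genuinely depends on $j$; and even in the most charitable reading, where one first replaces $\sigma^a(t)$ by its shifted expansion $\sum_i S_{i;a}^{[n-1]}/\spower{t}{\tau^{1-n}a}^i$, the product $\spower{t}{\tau^{1-n}a}^i\spower{-t}{\hat{a}}^j$ for $i+j=n$ becomes $\pm\spower{t}{\tau^{1-n}a}^{n-1}(t-a_{1-j})$, with a repeated factor, rather than a common basis element of degree $n$. So there is no single basis in which the degree-$n$ coefficient is, summand by summand, $(-1)^jS_{i;a}^{[n-1]}\Lambda_{j;a}$.

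What actually happens when everything is expanded over a common basis, say $\lbrace 1/\spower{t}{\tau^{1-n}a}^k\rbrace_k$ via Lemma \ref{spowerreclemma-a}, is that every term of lower degree $i+j=k<n$ also contributes to the coefficient of $1/\spower{t}{\tau^{1-n}a}^n$, and collecting all contributions yields, for $n\geq 1$,
\begin{equation*}
0=\sum_{i+j=n}\sum_{\nu=0}^{i-1}(-1)^j\abinom{i-1}{\nu}_{n-\nu}^{\tau^{1-n}a}S_{i-\nu;a}\Lambda_{j;a},
\end{equation*}
in which the coefficients carry upper index $i-1$, depending on the split. By Lemma \ref{sumlemma}, recognizing the inner sum as $S_{i;a}^{[n-1]}$ requires instead the coefficients $\abinom{n-1}{\nu}_{i-\nu}^{\tau^{1-n}a}$. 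The step you dismiss as ``purely bookkeeping'' --- that the shift index comes out uniformly as $n-1$, independent of the split --- is precisely this nontrivial identity $\abinom{i-1}{\nu}_{n-\nu}^{a}=\abinom{n-1}{\nu}_{i-\nu}^{a}$, which is Lemma \ref{difficultcombinatorics} of the paper and is proved there via the cancellation property of supersymmetric functions; it does not follow from $\widehat{\tau^{-s}a}=\tau^{s}\hat{a}$ or from Lemma \ref{spowerreclemma-a} alone. Without this identity, or an equivalent substitute, your argument does not close.
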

\begin{proof}
This follows from Eq. (\ref{powersumequation}). Indeed,
\begin{align*}
1&=\sum_{i,j\geq 0}\frac{S_{i;a}\Lambda_{j;}a}{\spower{t}{a}^i\spower{-t}{\hat{a}}^j}=\sum_{i,j\geq 0}(-1)^j\frac{S_{i;a}\Lambda_{j;a}}{\spower{t}{\tau^{-j}a}^{i+j}}=\sum_{i,j\geq 0}(-1)^j\frac{S_{i;a}\Lambda_{j;a}}{\spower{t}{\tau^{n-j-1}\tau^{1-n}a}^{i+j}}.
\end{align*}
Using the $\mZ[a]$-basis $\lbrace 1,1/\spower{t}{\tau^{1-n}a}^1,\ldots, 1/\spower{t}{\tau^{1-n}a}^n,1/t^{n+1},\ldots \rbrace$ for $\Bbbk[1/t]$, we obtain
\begin{align*}
1&=\sum_{k=0}^{n-1}\sum_{i+j=k}(-1)^j\frac{S_{i;a}\Lambda_{j;a}}{\spower{t}{\tau^{n-j-1}\tau^{1-n}a}^{k}}+\sum_{i+j=n}(-1)^j\frac{S_{i;a}\Lambda_{j;a}}{\spower{t}{\tau^{1-n}a}^n}+\text{(higher order terms)}.
\end{align*}
Here, we have used that the expansion of $1/\spower{t}{\tau^{-1}\tau^{1-n}a}$ from Lemma \ref{sumlemma} has leading coefficient one. Now we can expand using Eq. (\ref{spowerrec2-a}) to find
\begin{align*}
1&=\sum_{k=0}^{n}\sum_{i+j=1}\sum_{\nu=0}^{\min\lbrace k,n-k\rbrace }(-1)^j\abinom{n-j-1}{\nu}_k^{\tau^{1-n}a}\frac{S_{i;a}\Lambda_{j;a}}{\spower{t}{\tau^{1-n}a}^{k+\nu}}+\text{(higher order terms)}.
\end{align*}
This implies, by extracting the coefficients of $1/\spower{t}{\tau^{1-n}a}^{n}$ and substituting $i\leftrightarrow i+\nu$,
\begin{align}\label{workingeq}
0&=\sum_{i+j=n}\sum_{\nu=0}^{i-1}\abinom{i-1}{\nu}_{n-\nu}^{\tau^{1-n}a}(-1)^jS_{i-\nu;a}\Lambda_{j;a}\\&=\sum_{i+j=n}\sum_{\nu=0}^{n-1}\abinom{n-1}{\nu}_{i-\nu}^{\tau^{1-n}a}(-1)^jS_{i-\nu;a}\Lambda_{j;a}\\&=\sum_{i+j=n}(-1)^j S_{i;a}^{[n-1]}\Lambda_{j;a}.
\end{align}
Here, we use the following Lemma \ref{difficultcombinatorics} in the second equality. 
\end{proof}

\begin{lemma}\label{difficultcombinatorics}For any series of parameters $a$, and $i,n,\nu\geq 0$,
\begin{equation}
\abinom{i-1}{\nu}_{n-\nu}^a=\abinom{n-1}{\nu}_{i-\nu}^a
\end{equation}
\end{lemma}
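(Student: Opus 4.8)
The plan is to recast $\abinom{l}{\nu}_k^a$ as a \emph{divided difference} and then exploit an identity of rational functions. First I would turn the expansion (\ref{spowerrec2-a}) into a polynomial identity. Multiplying both sides of Lemma \ref{spowerreclemma-a} (for the sequence $a$, exponent $k$, shift $l$) by $\spower{t}{a}^{k+l}=\prod_{m=1}^{k+l}(t-a_m)$ and using $\spower{t}{\tau^l a}^k=\prod_{m=l+1}^{l+k}(t-a_m)$ gives
\begin{equation}\label{starexp}
\spower{t}{a}^l=\sum_{\nu=0}^{l}\abinom{l}{\nu}_k^a\prod_{m=k+\nu+1}^{k+l}(t-a_m).
\end{equation}
The monic polynomials $\prod_{m=k+\nu+1}^{k+l}(t-a_m)$ have degrees $l-\nu$, so (\ref{starexp}) is precisely Newton's forward-difference expansion of $f(t)=\spower{t}{a}^l$ at the nodes $a_{k+l},a_{k+l-1},\dots$. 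Reading off the coefficients therefore identifies
\begin{equation}\label{divdiffform}
\abinom{l}{\nu}_k^a=f[a_{k+\nu},a_{k+\nu+1},\dots,a_{k+l}],\qquad f(t)=\spower{t}{a}^l,
\end{equation}
the divided difference of $\spower{t}{a}^l$ over the $l-\nu+1$ nodes $a_{k+\nu},\dots,a_{k+l}$ (divided differences being symmetric in their nodes).

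Next I would apply (\ref{divdiffform}) to each side. For the left-hand side ($l=i-1$, $k=n-\nu$) the nodes are $a_n,\dots,a_{n+i-\nu-1}$ and $f=\prod_{m=1}^{i-1}(t-a_m)$; for the right-hand side ($l=n-1$, $k=i-\nu$) the nodes are $a_i,\dots,a_{i+n-\nu-1}$ and $f=\prod_{m=1}^{n-1}(t-a_m)$. Using the explicit formula $g[x_0,\dots,x_p]=\sum_j g(x_j)/\prod_{r\ne j}(x_j-x_r)$, each side is the sum of the simple-pole coefficients of a rational function: the left of
\[
F_L(z)=\frac{\prod_{m=1}^{i-1}(z-a_m)}{\prod_{m=n}^{n+i-\nu-1}(z-a_m)},\qquad\text{and the right of}\qquad F_R(z)=\frac{\prod_{m=1}^{n-1}(z-a_m)}{\prod_{m=i}^{i+n-\nu-1}(z-a_m)}.
\]
Since the claimed identity is polynomial in the $a_m$, it suffices to treat them as distinct indeterminates, and since the statement is symmetric under $i\leftrightarrow n$ I may assume $i\le n$.

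The crux is that $F_L$ and $F_R$ are the \emph{same} element of $\Bbbk(z)$: for $\nu\le i-1$, cancelling the common ``gap'' factors $\prod_{m=i}^{n-1}(z-a_m)$ from the numerator and denominator of $F_R$ returns $F_L$. Hence their partial-fraction expansions coincide and their simple-pole coefficients agree term by term. The extra denominator indices of $F_R$, namely $m\in\{i,\dots,n-1\}$, are cancelled by its numerator (equivalently $\prod_{m=1}^{n-1}(a_p-a_m)$ vanishes at $a_p$ for such $p$), so those poles carry coefficient $0$ and contribute nothing to the right-hand sum, while the surviving poles $a_n,\dots,a_{n+i-\nu-1}$ are exactly those of $F_L$. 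Both sides therefore equal the total of the simple-pole coefficients of the common rational function, which yields the identity. When $\nu\ge i$ the left-hand sum is empty and the reduced $F_L=F_R$ has no finite poles, so both sides vanish; the case $i=n$ is trivial.

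The main obstacle I anticipate is the first step: correctly passing from the combinatorial definition through (\ref{starexp}) to the divided-difference form (\ref{divdiffform}), where the reversal of the node order and the index bookkeeping in Newton's expansion must be tracked carefully. Once (\ref{divdiffform}) is established, the rational-function cancellation is the genuinely clarifying move and the remainder is formal; the only real care needed is to phrase the ``sum of residues'' purely algebraically via partial fractions, so that the argument is valid over an arbitrary field of characteristic zero rather than relying on complex analysis.
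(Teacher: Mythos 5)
Your proposal is correct in its core and takes a genuinely different route from the paper. The paper's proof is a short appeal to external results: it invokes \cite{ORV2}*{Lemma 2.5} (cf.\ Eq.\ (\ref{abinomexplained})) to rewrite both sides as supersymmetric complete homogeneous functions, $\abinom{i-1}{\nu}_{n-\nu}^a=h_\nu(a_i,\ldots;-a_1,\ldots,-a_{n-1})$ and $\abinom{n-1}{\nu}_{i-\nu}^a=h_\nu(a_n,\ldots;-a_1,\ldots,-a_{i-1})$, and then strikes out the entries $a_i,\ldots,a_{n-1}$ occurring in both argument lists by the cancellation property of supersymmetric functions (\cite{Mac}*{Chapter~3}). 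You instead identify $\abinom{l}{\nu}_k^a$ as the divided difference of $f(t)=\spower{t}{a}^l$ over the nodes $a_{k+\nu},\ldots,a_{k+l}$ --- obtained by clearing denominators in Lemma \ref{spowerreclemma-a} and matching coefficients in the (triangular, hence unique) Newton expansion --- and then prove the symmetry by exhibiting both sides as sums of simple-pole coefficients of one and the same rational function. I checked the bookkeeping: the cleared-denominator identity, the divided-difference identification, the equality $F_L=F_R$ after cancelling the factors $(z-a_m)$ for $i\le m\le n-1$, and the vanishing of the contributions from the spurious nodes are all correct in the range $i\le n$, $\nu\le i-1$. What your route buys is self-containedness: it uses only the paper's own Lemma \ref{spowerreclemma-a} plus standard facts about divided differences and partial fractions, in effect reproving from scratch exactly the instance of supersymmetric cancellation that the paper imports.

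There is one gap to patch, in the degenerate case. For $\nu>i$ you argue that the right-hand side vanishes because the reduced $F_R$ has no finite poles; but this tacitly uses the divided-difference (residue-sum) representation of $\abinom{n-1}{\nu}_{i-\nu}^a$ with lower index $k=i-\nu<0$, whereas your representation was derived from Lemma \ref{spowerreclemma-a}, which is stated and proved only for $k\ge0$ (indeed $1/\spower{t}{a}^k$ is not even defined for $k<0$). The fix is cheap: either observe directly from Eq.\ (\ref{abinom}) that for $1\le i\le\nu$ every summand of $\abinom{n-1}{\nu}_{i-\nu}^a$ contains the factor with product index equal to $i$, namely $a_{(i-\nu)+(\nu-i)+s_i}-a_{s_i}=0$, so the right-hand side vanishes term by term; or re-run the Pascal-type induction from the paper's proof of Lemma \ref{spowerreclemma-a} on the cleared-denominator (polynomial) form of the identity, which goes through verbatim for every $k\in\mZ$. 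Finally, a shared blind spot rather than a flaw of yours: for $i=0$ and $\nu\ge1$ the identity as literally stated is false (e.g.\ $\abinom{-1}{1}_{1}^a=0$ while $\abinom{1}{1}_{-1}^a=a_0-a_1$), so both your argument and the paper's implicitly read the hypothesis as $i,n\ge1$.
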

\begin{proof}
By \cite{ORV2}*{Lemma 2.5}, cf. Eq. (\ref{abinomexplained}), we find that
\begin{align*}
\abinom{n-1}{\nu}_{i-\nu}^a&=h_\nu(a_{n},\ldots, a_{n+i-\nu-1};-a_1,\ldots, -a_{i-1}), \\
\abinom{i-1}{\nu}_{n-\nu}^a&=h_\nu(a_{i},\ldots, a_{i+i-\nu-1};-a_1,\ldots, -a_{n-1}).
\end{align*}
In the expression for $\abinom{n-1}{\nu}_{i-\nu}^a$, the variables $a_i,\ldots, a_{n-1}$ appear among the first, the symmetric variables, and $-a_i,\ldots, -a_{n-1}$ appear among the anti-symmetric variables. Hence, by the cancellation property of supersymmetric functions, see e.g. \cite{Mac}*{Chapter~3}, these can be removed pairwise, showing equality with $\abinom{i-1}{\nu}_{n-\nu}^a$.
\end{proof}

Eq. (\ref{lineareq}) is analogous to an equation obtained by Macdonald in \cite{Mac}*{Chapter 3.20} in the commutative setup. This equation forms the basis for the definition of multiparameter Schur functions in \cite{ORV2}*{Eq. (3.4)}, and appears in analogues of Jacobi--Trudi and N\"agelsbach--Kostka formulas of shifted Schur functions in \cite{OO}*{Theorem 13.1}. 

In the case where $a_i=0$ for all $i\in\mZ$, Eq. (\ref{lineareq}) recovers the identities in \cite{Getal}*{Eq. (30)} for the noncommutative symmetric functions.
Moreover, we obtain the following expressions in terms of quasideterminants which are analogues of \cite{Getal}*{Eqs. (37), (38)}.

\begin{proposition}\label{transitionprop}For any $n\geq 1$ we find that
\begin{align}
\label{LintermsofS}
\Lambda_{n;a}&=(-1)^{n-1}\begin{vmatrix}
S_{1;a}^{[n-1]}&S_{2;a}^{[n-1]}&\hdots&S_{n-2;a}^{[n-1]}&S_{n-1;a}^{[n-1]}&\boxed{S_{n;a}^{[n-1]}}\\
1&S_{1;a}^{[n-2]}&\hdots&S_{n-3;a}^{[n-2]}&S_{n-2;a}^{[n-2]}&S_{n-1;a}^{[n-2]}\\
0&\ddots&\ddots&\vdots&\vdots&\vdots\\
\vdots&\hdots&\ddots&S_{1;a}^{[2]}&S_{2;a}^{[2]}&S_{3;a}^{[2]}\\
\vdots&\hdots&\hdots&1&S_{1;a}^{[1]}&S_{2;a}^{[1]}\\
0&\hdots&\hdots&0&1&S_{1;a}
\end{vmatrix},
\end{align}
\begin{align}\label{SintermsofL}
S_{n;a}&=(-1)^{n-1}\begin{vmatrix}
\Lambda_{1;a}&\Lambda_{2;a}^{[-1]}&\hdots&\Lambda_{n-2;a}^{[3-n]}&\Lambda_{n-1;a}^{[2-n]}&\boxed{\Lambda_{n;a}^{[1-n]}}\\
1&\Lambda_{1;a}^{[-1]}&\hdots&\Lambda_{n-3;a}^{[3-n]}&\Lambda_{n-2;a}^{[2-n]}&\Lambda_{n-1;a}^{[1-n]}\\
0&\ddots&\ddots&\vdots&\vdots&\vdots\\
\vdots&\hdots&\ddots&\Lambda_{1;a}^{[3-n]}&\Lambda_{2;a}^{[2-n]}&\Lambda_{3;a}^{[1-n]}\\
\vdots&\hdots&\hdots&1&\Lambda_{1;a}^{[2-n]}&\Lambda_{2;a}^{[1-n]}\\
0&\hdots&\hdots&0&1&\Lambda_{1;a}
\end{vmatrix}.
\end{align}\end{proposition}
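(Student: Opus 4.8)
The plan is to read the linear relations of Eq. (\ref{lineareq}) as a triangular system and invert it with the noncommutative Cramer rule for quasideterminants, following the unshifted treatment of \cite{Getal}; the genuinely new ingredient is the bookkeeping of the shift operators $[s]$. I would prove Eq. (\ref{LintermsofS}) directly and then obtain Eq. (\ref{SintermsofL}) by transporting it through the duality of Proposition \ref{involution}.

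For Eq. (\ref{LintermsofS}), solving Eq. (\ref{lineareq}) for its top term shows that the relation of index $m$ reads
$$\Lambda_{m;a}=(-1)^{m-1}\sum_{j=0}^{m-1}(-1)^{j}S_{m-j;a}^{[m-1]}\Lambda_{j;a},$$
a recursion expressing $\Lambda_{m;a}$ through $\Lambda_{0;a},\ldots,\Lambda_{m-1;a}$ with scalar-free leading coefficient $S_{0;a}^{[m-1]}=1$. Recursions of exactly this shape are summed in closed form by a single upper-Hessenberg quasideterminant with unit subdiagonal and boxed top-right entry (the Cramer rule of \cites{GR1,GR2,GGRW}, applied as in \cite{Getal}). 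The coefficient $S_{m-j;a}^{[m-1]}$ coming from the index-$m$ relation becomes the corresponding entry of that row; listing the rows from index $m=n$ down to $m=1$ places shift $[n-k]$ in row $k$, which is exactly Eq. (\ref{LintermsofS}). I would fix the normalization on the base case $n=2$, where the $2\times2$ quasideterminant evaluates to $\Lambda_{2;a}=S_{1;a}^{[1]}S_{1;a}-S_{2;a}^{[1]}$, and then match the expansion of the quasideterminant along its subdiagonal of ones with the back-substitution in the recursion.

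For Eq. (\ref{SintermsofL}) I would not re-solve a system but dualize. Two preliminary identities are needed. Applying Lemma \ref{sumlemma} termwise and using $\hat{\hat a}=a$ yields the shift-duality $\omega_a(S_{k;a}^{[s]})=\Lambda_{k;\hat a}^{[-s]}$; and since $\omega_a$ is an anti-algebra map, it turns a quasideterminant into the quasideterminant of the transposed matrix, $\omega_a(|A|_{1n})=|\omega_a(A)^{T}|_{n1}$. I then apply $\omega_{\hat a}$ to Eq. (\ref{LintermsofS}) written for the sequence $\hat a$: the left-hand side becomes $\omega_{\hat a}(\Lambda_{n;\hat a})=S_{n;a}$, while each entry $S_{\ell-k+1;\hat a}^{[n-k]}$ becomes $\Lambda_{\ell-k+1;a}^{[k-n]}$. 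Transposing and then reversing the order of both rows and columns (which carries $|{\cdot}|_{n1}$ back to $|{\cdot}|_{1n}$ and restores the top-right box) converts the row-constant shift pattern into a column-constant one: a direct index computation shows the resulting $(k,\ell)$-entry is $\Lambda_{\ell-k+1;a}^{[1-\ell]}$, which is precisely Eq. (\ref{SintermsofL}). Here one uses that $\Lambda_{1;a}^{[s]}=\Lambda_{1;a}$ for every $s$, so the diagonal reads simply $\Lambda_{1;a}$.

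The quasideterminant linear algebra is identical to the unshifted case, so the main obstacle is entirely the shift bookkeeping. One must verify that the Cramer inversion deposits shift $[n-k]$ in row $k$ of Eq. (\ref{LintermsofS}), and that the composite of the anti-automorphism (which negates shifts), the transpose, and the double reversal sends this to the column-indexed shift $[1-\ell]$ of Eq. (\ref{SintermsofL}). Keeping the box position, the parameter sequence ($a$ versus $\hat a$), and the shift synchronized through these operations is the delicate point; everything else reduces to the scalar identities already established in Lemmas \ref{spowerreclemma-a} and \ref{sumlemma}.
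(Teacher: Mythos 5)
Your proof of Eq.~(\ref{LintermsofS}) is essentially the paper's own: both read Eq.~(\ref{lineareq}) as a triangular system in the $\Lambda_{k;a}$ and invert it with the noncommutative Cramer rule of \cite{GR1} (cf.\ \cite{GGRW}), and your bookkeeping placing shift $[n-k]$ in row $k$ agrees with the paper's system; your $n=2$ normalization check $\Lambda_{2;a}=S_{1;a}^{[1]}S_{1;a}-S_{2;a}^{[1]}$ is the one consistent with Eq.~(\ref{lineareq}) and with the stated formula. Where you genuinely diverge is Eq.~(\ref{SintermsofL}): the paper proves it symmetrically, solving a second system with the $S_{k;a}^{[k-1]}$ treated as the unknowns and with \emph{right} multiplication by the coefficient matrix (invoking Lemma \ref{shiftauto}), whereas you transport Eq.~(\ref{LintermsofS}), written for the sequence $\hat a$, through the anti-isomorphism $\omega_{\hat a}$. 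Your route is sound and non-circular: the two inputs you need --- the shift-duality $\omega_a(S_{k;a}^{[s]})=\Lambda_{k;\hat a}^{[-s]}$, which follows by comparing the coefficient formulas of Lemma \ref{sumlemma} using $\hat{\hat a}=a$, and the fact that an anti-homomorphism carries $|A|_{1n}$ (polynomial in its entries because of the Hessenberg shape) to the quasideterminant of the anti-diagonally reflected matrix --- both rest on material preceding the proposition, and your index computation giving $(k,\ell)$-entry $\Lambda_{\ell-k+1;a}^{[1-\ell]}$, together with $\Lambda_{1;a}^{[s]}=\Lambda_{1;a}$, reproduces the column-constant shifts of Eq.~(\ref{SintermsofL}) exactly. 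What the paper's route buys is uniformity: one Cramer argument applied twice, with no need for the transpose property of quasideterminants. What yours buys is that it avoids the right-module version of Cramer's rule entirely and makes the $S\leftrightarrow\Lambda$ exchange conceptual rather than computational; indeed, it is precisely the mechanism the paper itself deploys later in the proof of Corollary \ref{duality}, so your argument anticipates that duality rather than using it.
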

Note that these quasideterminants evaluate to polynomials in their entries by \cite{GGRW}*{Proposition 2.6}.
\begin{proof}
In order to prove Eq. (\ref{LintermsofS}), we use the system of noncommutative linear equations
\begin{align*}
\Lambda_{n;a}&=S_{1;a}^{[n-1]}\Lambda_{n-1;a}-S_{2;a}^{[n-1]}\Lambda_{n-2;a}\pm\ldots (-1)^{n-2}S_{n-1;a}^{[n-1]}\Lambda_{1;a}+(-1)^{n-1} S_{n;a}^{[n-1]},\\
\Lambda_{n-1;a}&=S_{1;a}^{[n-2]}\Lambda_{n-2;a} \mp\ldots (-1)^{n-3}S_{n-2;a}^{[n-2]}\Lambda_{1;a}+(-1)^{n-2} S_{n-1;a}^{[n-2]},\\
\vdots \qquad &=~\vdots\\
\Lambda_{1;a}&=S_{1;a},
\end{align*}
which is obtained from Eq. (\ref{lineareq}).
Here, the $\Lambda_{k;a}$ are thought of as indeterminants. The result follows from \cite{GR1}*{Theorem 1.8} (see also \cite{GGRW}*{Section 1.6.1}), a noncommutative Cramer's rule. Equation (\ref{SintermsofL}) is proved analogously, using Lemma \ref{shiftauto}, treating instead the $S_{k;a}^{[k-1]}$ as indeterminants in a similar system of noncommutative linear equations, with right multiplication by the coefficient matrix.
\end{proof}

Considering the case $a_i=0$ for all $i\in \mZ$, we recover the equations from \cite{Getal}. 

\begin{example}Special cases give that
\begin{align*}
\Lambda_{2;a}&=-S_{2;a}^{[-1]}+S_{1;a}^{[-1]}S_{1;a},\\
\Lambda_{3;a}&=S_{3;a}^{[-2]}-S_{1;a}^{[-2]}S_{2;a}^{[-1]}-S_{2;a}^{[-2]}S_{1;a}^{[-1]}+S_{1;a}^{[-2]}S_{1;a}^{[-1]}S_{1;a}.
\end{align*}
\end{example}

\subsection{Multiparameter Ribbon Schur Functions}\label{ribbonschur}

Ribbon Schur functions provide a $\mZ$-linear basis of the space $\Sym_{\mZ}$, see \cite{Getal}*{Section 3.2}. 
Here, we adapt the definition of these noncommutative symmetric functions to the $a$-shifted setup.

Let $\Theta$ be a  \emph{ribbon} \cite{Getal}*{Section 3.2}. Such a ribbon is encoded by a \emph{composition} $I=(i_1,\ldots, i_n)\in \mN_{\geq 1}^n$. We think of $\Theta$ as having $i_l$ boxes in the $l$-th row, arranged in a diagram such that the first box of the $(l+1)$-th row lies directly underneath the last box of the $l$-th row. We say that the composition $I$ has \emph{length} $n$.
We denote
\begin{equation}
d_I:=\sum_{j=1}^n i_j
\end{equation}
for the \emph{degree} of the composition $I$. Further denote, for $k=1,\ldots, n-1$,
\begin{align}\label{ribbonshifts}
s_k=\sum_{\nu=k}^{n-1} i_\nu.
\end{align}
In particular, $s_1=d_I-i_n$, and $s_{k+1}>s_{k}\geq n-k$.
With this notation, we define $a$-shifted ribbon Schur functions using an analgue of the classical Jacobi--Trudi formula involving certain shifts in the rows of the matrix.

\begin{definition}\label{Schurdef}
Let $I=(i_1,\ldots, i_n)$ be a composition. Define the \emph{multiparameter} (or \emph{$a$-shifted}) \emph{ribbon Schur function} $R_{I;a}$ by 
\begin{align}
R_{I;a}=(-1)^{n-1}\begin{vmatrix}
S_{i_1;a}^{[s_1]}&S_{i_1+i_2;a}^{[s_1]}&\hdots&S_{i_1+\ldots +i_{n-1};a}^{[s_1]}&\boxed{S_{i_1+\ldots +i_{n};a}^{[s_1]}}\\
1&S_{i_2;a}^{[s_2]}&\hdots&S_{i_2+\ldots+i_{n-1};a}^{[s_2]}&S_{i_2+\ldots+i_{n};a}^{[s_2]}\\
0&\ddots&\ddots&\vdots&\vdots\\
\vdots&\hdots&\ddots&S_{i_{n-1};a}^{[s_{n-1}]}&S_{i_{n-1}+i_n;a}^{[s_{n-1}]}\\
0&\hdots&\hdots&1&S_{i_n;a}
\end{vmatrix}.
\end{align}
\end{definition}

It follows from \cite{Getal}*{Proposition 2.6} that every $R_{I;a}$ is a $\mZ$-linear combination of polynomials of its entries, which are of the form $S_{k;a}^{[l]}$. Now, by Lemma \ref{sumlemma}, all $S_{k;a}^{[l]}$ are $\mZ[a]$-linear combinations of the generators $S_{k;a}$. Thus, $R_{I;a}\in \Sym^a_{\mZ}$ for any composition $I$.

\begin{example}
As in the unshifted case, $R_{I;a}$ is a polynomial in its entries by \cite{Getal}*{Proposition 2.6}, and $R_{(n);a}=S_{n;a}$, where $(n)$ is the length one composition of degree $n$. By  Proposition \ref{transitionprop}, $R_{(1^n);a}=\Lambda_{n;a}$, where $(1^n)=(1,\ldots, 1)$ is the composition of length $n$ containing only ones.
\end{example}

Definition \ref{Schurdef} compares to \cite{OO}*{Equation (3.9)} defining shifted Schur functions, and to \cite{ORV2}*{Eq. (3.4)} defining multiparameter Schur functions in the commutative setting. Note that we used different shifts in the row, which turn out to be more natural for both product formula and duality in the ribbon Schur setup.

\begin{remark}\label{equidistant}
In all Examples \ref{expl1}--\ref{expl3}, $\mZ[a]=\mZ$. This is, in general, the case for any sequence $a$ where $a_i-a_j\in \mZ$ for any $i,j$. We call such a sequence \emph{whole-distant}. In this case, the $a$-shifted ribbon Schur functions form a $\mZ$-basis for $\Sym_\mZ^a$.
\end{remark}

\begin{lemma}\label{RIrecursion}
Let $I=(i_1,i_2,\ldots, i_n)$ be a composition.  Then
\begin{align}\label{schurrec1}
R_{I;a}&=R_{(i_1,\ldots, i_{n-1});a}^{[i_{n-1}]}S_{i_n;a}-R_{(i_1,\ldots, i_{n-2},i_{n-1}+i_{n});a}^{[i_{n-1}]},\\
R_{I;a}&=\begin{cases}S_{i_1;a}^{[d_I-i_n]}R_{(i_2,\ldots,i_n);a}-R_{(i_1+i_2,i_3,\ldots, i_n);a},&n\geq 3\\
S_{i_1;a}^{[i_1]}S_{i_2;a}-S_{(i_1+i_2);a}^{[i_1]}, &n=2.
\end{cases}
\label{schurrec2}
\end{align}
\end{lemma}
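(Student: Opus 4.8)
The plan is to read off both recursions as corner expansions (noncommutative Schur complements) of the quasideterminant defining $R_{I;a}$. Write $A=(A_{pq})$ for the $n\times n$ matrix of Definition \ref{Schurdef}, so that $R_{I;a}=(-1)^{n-1}|A|_{1n}$, with $A_{pq}=S^{[s_p]}_{i_p+\cdots+i_q;a}$ for $p\le q$ (setting $s_n=0$), with $A_{p+1,p}=1$, and with $A_{pq}=0$ for $p>q+1$. The basic tool is the expansion of a quasideterminant about its boxed corner,
\begin{align*}
|A|_{1n}=A_{1n}-R_1\,(A^{1n})^{-1}\,C_n,
\end{align*}
where $R_1=(A_{11},\dots,A_{1,n-1})$ is the first row with the corner removed, $C_n=(A_{2n},\dots,A_{nn})^{\mathsf T}$ is the last column with the corner removed, and $A^{1n}$ is $A$ with row $1$ and column $n$ deleted. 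The key structural observation is that $A^{1n}$ is unipotent upper triangular: the subdiagonal ones of $A$ become its diagonal and everything strictly below becomes zero. Hence $(A^{1n})^{-1}$ exists and is again unipotent upper triangular, and the two recursions arise by blocking $A^{1n}$ in two different ways before inverting.

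To obtain \eqref{schurrec2} I would isolate the first column of $A$, which equals $(S^{[s_1]}_{i_1;a},1,0,\dots,0)^{\mathsf T}$. Blocking $A^{1n}=\left(\begin{smallmatrix}1&w^{\mathsf T}\\0&D\end{smallmatrix}\right)$, inverting, and substituting into the corner expansion, the terms regroup as
\begin{align*}
|A|_{1n}=\bigl(A_{1n}-R_1'D^{-1}C_n'\bigr)-S^{[s_1]}_{i_1;a}\bigl(A_{2n}-w^{\mathsf T}D^{-1}C_n'\bigr),
\end{align*}
with $R_1',C_n'$ the evident truncations. Each parenthesis is itself a corner quasideterminant: deleting row $1$ and column $1$ produces exactly the matrix of $R_{(i_2,\dots,i_n);a}$, while deleting row $2$ and column $1$ produces that of $R_{(i_1+i_2,i_3,\dots,i_n);a}$ — a direct check that the inherited exponents $s_2,\dots,s_{n-1}$ and the merged first part behave as required. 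Tracking the sign $(-1)^{n-1}$ and using $s_1=d_I-i_n$ yields \eqref{schurrec2} for $n\ge3$; the case $n=2$ is the immediate $2\times2$ computation $R_{(i_1,i_2);a}=-|A|_{12}=S^{[i_1]}_{i_1;a}S_{i_2;a}-S^{[i_1]}_{(i_1+i_2);a}$. Note that here no shift operator appears, since deleting the first row and column leaves all inherited exponents $s_k$ untouched.

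To obtain \eqref{schurrec1} I would instead isolate the last row $(0,\dots,0,1,S_{i_n;a})$, blocking $A^{1n}=\left(\begin{smallmatrix}E&f\\0&1\end{smallmatrix}\right)$ so as to split off the subdiagonal $1$ in position $(n,n-1)$. The same substitution now regroups as
\begin{align*}
|A|_{1n}=\bigl(A_{1n}-R_1''E^{-1}C_n''\bigr)-\bigl(A_{1,n-1}-R_1''E^{-1}f\bigr)S_{i_n;a}.
\end{align*}
The second parenthesis is the corner quasideterminant of the top-left $(n-1)\times(n-1)$ block $B$ of $A$, and the first is that of the block $C$ obtained by deleting row $n$ and column $n-1$. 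Comparing shift exponents, $B$ and $C$ are \emph{not} literally the defining matrices of the length-$(n-1)$ compositions $(i_1,\dots,i_{n-1})$ and $(i_1,\dots,i_{n-2},i_{n-1}+i_n)$: their natural shift convention uses $s_k'=s_k-i_{n-1}$, so $B$ and $C$ are the images of those defining matrices under $\phi^{[i_{n-1}]}$. Hence $|B|_{1,n-1}=(-1)^{n-2}R^{[i_{n-1}]}_{(i_1,\dots,i_{n-1});a}$ and $|C|_{1,n-1}=(-1)^{n-2}R^{[i_{n-1}]}_{(i_1,\dots,i_{n-2},i_{n-1}+i_n);a}$, and the sign $(-1)^{n-1}$ converts the displayed identity into \eqref{schurrec1}.

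The block inversion and the sign bookkeeping are routine; the one genuinely delicate point, and the step I would treat most carefully, is the appearance of the shift operator in \eqref{schurrec1}. Two things must be checked. First, that $\phi^{[s]}$ commutes with forming these quasideterminants: this holds because $\phi^{[s]}$ is an algebra homomorphism (Lemma \ref{shiftauto}) and, by \cite{GGRW}*{Proposition 2.6}, every $R_{J;a}$ is a polynomial in its entries, so $\phi^{[s]}$ may be applied entrywise. Second, the composition law $\phi^{[s]}\circ\phi^{[t]}=\phi^{[s+t]}$, equivalently $(S^{[t]}_{k;a})^{[s]}=S^{[s+t]}_{k;a}$, which is precisely what lets the inherited exponents $s_k$ in $B$ and $C$ be rewritten as $\phi^{[i_{n-1}]}$ applied to the exponents $s_k-i_{n-1}$; this follows from the generating-series identities \eqref{phidef} together with the uniqueness of such expansions in Lemma \ref{spowerreclemma-a}, and is transparent in the whole-distant cases of Remark \ref{equidistant}, where $\phi^{[s]}$ is literally the $s$-th power of the unit shift. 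Once this composition law is in hand, both recursions drop out, and — consistent with the $n=2$ check above — \eqref{schurrec1} and \eqref{schurrec2} agree when $n=2$.
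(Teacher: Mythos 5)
Your Schur-complement computations are sound, and for \eqref{schurrec2} they amount to a complete proof that is essentially the paper's argument in different clothing (the paper runs the same expansion through the homological relations of quasideterminants plus a column expansion, you through direct inversion of the unipotent triangular minor $A^{1n}$). The genuine gap sits exactly at the step you yourself flagged as delicate in \eqref{schurrec1}: the composition law $\phi^{[s]}\circ\phi^{[t]}=\phi^{[s+t]}$ is \emph{false} for a general parameter sequence $a$. Indeed, by Lemma \ref{sumlemma} one has $S^{[1]}_{2;a}=S_{2;a}+(a_1-a_0)S_{1;a}$ and $S^{[s]}_{1;a}=S_{1;a}$, so
\begin{equation*}
\phi^{[1]}\bigl(\phi^{[1]}(S_{2;a})\bigr)=S_{2;a}+2(a_1-a_0)S_{1;a},
\qquad
\phi^{[2]}(S_{2;a})=S^{[2]}_{2;a}=S_{2;a}+(a_1-a_{-1})S_{1;a},
\end{equation*}
which agree only when $a_1-a_0=a_0-a_{-1}$: the coefficients $\abinom{s}{\nu}_{k-\nu}^{\tau^{-s}a}$ depend on the actual entries of the shifted sequence, not merely on the relative shift, so the $\phi^{[s]}$ do not form a one-parameter group. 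Your fallback that the law is ``transparent in the whole-distant cases'' is also wrong: the sequence with $a_{-1}=a_0=0$, $a_1=1$ is whole-distant and violates it; what the law really needs is the equidistance of Assumption \ref{equiass}, which is not assumed in this lemma. Worse, with your reading of $R^{[i_{n-1}]}_{J;a}$ as $\phi^{[i_{n-1}]}(R_{J;a})$, identity \eqref{schurrec1} itself becomes false: already for $I=(1,1,1)$ and the sequence above, $\phi^{[1]}(\Lambda_{2;a})S_{1;a}-\phi^{[1]}(R_{(1,2);a})$ differs from $\Lambda_{3;a}$ by a nonzero element of $\Sym^a_{\mZ}$ (its expansion contains the stray term $-S_{1;a}S_{1;a}$).

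The repair is that no composition law is needed anywhere: the superscript in $R^{[s]}_{J;a}$ must be read determinantally, as the quasideterminant of Definition \ref{Schurdef} with every shift exponent increased by $s$ --- that is, the specialization $K=(t_1+s,\ldots,t_{m-1}+s,s)$ of the generalized functions $R^{[K]}_{J;a}$ which the paper introduces in the proof of Theorem \ref{products}, and which is how the paper's own proof (via the homological relations) reads the lemma. Under that reading your blocks $B$ and $C$ are \emph{literally} the defining matrices of $R^{[i_{n-1}]}_{(i_1,\ldots,i_{n-1});a}$ and $R^{[i_{n-1}]}_{(i_1,\ldots,i_{n-2},i_{n-1}+i_n);a}$, since their entries carry the shifts $s_p=(s_p-i_{n-1})+i_{n-1}$ by inspection, and your block expansion then finishes the proof of \eqref{schurrec1} with no further input.
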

\begin{proof}
The proof of Eq. (\ref{schurrec1}) follows that of \cite{Getal}*{Proposition 3.13} closely. Denote by $A$ the matrix used in Definition \ref{Schurdef} to define $R_{I;a}$, and use the same notation for the minor matrix $A^{ij}$ and the quasideterminant $|A|_{ij}$ as in \emph{loc.cit}. We use the homological relations of quasideterminants from \cite{GGRW}*{Theorem 1.4.2} to find that 
\begin{align*}
R_{I;a}&=(-1)^{n-1}|A|_{1n}=(-1)^{n-2} |A^{nn}|_{1,n-1}\cdot |A^{1n}|_{n,n-1}^{-1}\cdot|A|_{nn}\\
&=(-1)^{n-2} |A^{nn}|_{1,n-1}\cdot|A|_{nn}\\
&=(-1)^{n-2} |A^{nn}|_{1,n-1}\left(S_{i_{n};a}-|A^{nn}|_{1,n-1}^{-1}\cdot |A^{n,n-1}|_{1,n}\right)\\
&=R_{(i_1,\ldots, i_{n-1});a}^{[i_{n-1}]}\left(S_{i_n;a}-\left(R_{(i_1,\ldots, i_{n-1});a}^{[i_{n-1}]}\right)^{-1}R_{(i_1,\ldots, i_{n-2},i_{n-1}+i_{n});a}^{[i_{n-1}]}\right)\\
&=R_{(i_1,\ldots, i_{n-1});a}^{[i_{n-1}]}S_{i_n;a}-R_{(i_1,\ldots, i_{n-2},i_{n-1}+i_{n});a}^{[i_{n-1}]}.
\end{align*}
The third equality uses that the middle quasideterminant equals one, and the forth equality uses a row expansion along the last row of the second quasideterminant as in \cite{GGRW}*{Proposition 1.5.1}. The last equations use that the shift $\phi$ is an algebra morphism, and the $a$-shifted ribbon Schur functions are polynomial in their entries. The statement follows using $s_{n-1}=i_{n-1}$.

The second equality is easy in the case $n=2$, and for $n\geq 3$ follows from a similar computation using the other homological relations for the quasideterminants.
\begin{align*}
R_{I;a}&=(-1)^{n-1}|A_{1n}|=(-1)^{n-2}|A|_{11}|A^{1n}|_{21}^{-1}|A^{11}|_{2n}\\
&=(-1)^{n-2}\left( S_{i_1;a}^{[s_1]}-|A^{21}|_{1n}|A^{11}|_{2n}^{-1}\right)|A^{11}|_{2n}\\
&=S_{i_1;a}^{[s_1]}(-1)^{n-2}|A^{11}|_{2n}-|A^{21}|_{1n}\\
&=S_{i_1;a}^{[s_1]}R_{(i_2,\ldots, i_n);a}-R_{(i_1+i_2,i_3,\ldots ,i_n);a},
\end{align*}
this time using the column expansion of $|A|_{11}$ along the first column. We use that $s_1=d_I-i_n$.
\end{proof}

A special case of Eq. (\ref{schurrec1}) directly gives the hook formula
\begin{align}\label{eq52}
\Lambda_{k;a}^{[1]}S_{l;a}=R_{(1^k,l);a}+R_{(1^{k-1},l+1);a}^{[1]}.
\end{align}
This formula is an $a$-shifted analogue of \cite{Getal}*{Eq. (52)}.

\begin{lemma}For any composition $I=(i_1,\ldots, i_n)$ and $1\leq k\leq l\leq n$ denote $i_{k,l}=i_k+\ldots+i_l$. Then
\begin{align}\label{polynomialformula}
R_{I;a}=(-1)^{n-1}S_{i_{1,n};a}^{[s_1]}+\sum (-1)^{k}S_{i_{1,l_1};a}^{[s_1]}S_{i_{l_1+1,l_2};a}^{[s_{l_1+1}]}\ldots S_{i_{l_{k-1}+1,l_k};a}^{[s_{l_{k-1}+1}]}S_{i_{l_k+1,n};a}^{[s_{l_k+1}]},
\end{align}
where the sum is taken over all sequences $1\leq l_1<\ldots<l_k<n$, for $k=1,\ldots n-1$.
\end{lemma}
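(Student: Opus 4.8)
The plan is to induct on the length $n$ of the composition $I=(i_1,\ldots,i_n)$, using the left-hand recursion Eq.~(\ref{schurrec2}) of Lemma~\ref{RIrecursion}. First I would reinterpret the right-hand side of Eq.~(\ref{polynomialformula}): each summand is indexed by a choice of cut positions $1\le l_1<\ldots<l_k<n$, equivalently by a coarsening of $I$ obtained by grouping the parts into the consecutive blocks $(i_1,\ldots,i_{l_1}),(i_{l_1+1},\ldots,i_{l_2}),\ldots,(i_{l_k+1},\ldots,i_n)$, so that a coarsening into $k+1$ blocks contributes the monomial $S_{i_{1,l_1};a}^{[s_1]}S_{i_{l_1+1,l_2};a}^{[s_{l_1+1}]}\cdots S_{i_{l_k+1,n};a}^{[s_{l_k+1}]}$, with the shift on each factor governed by the starting index of the corresponding block via Eq.~(\ref{ribbonshifts}), and with coefficient $(-1)^{n-1-k}$. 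The base cases are immediate: $n=1$ gives $R_{(i_1);a}=S_{i_1;a}$ with $s_1=0$, and $n=2$ is exactly the second line of Eq.~(\ref{schurrec2}).

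For the inductive step with $n\ge 3$, I would split the coarsenings of $I$ into two families according to whether a cut is placed immediately after the first part. Those with a cut after position $1$ have leading block $(i_1)$ and are in bijection with coarsenings of $I'=(i_2,\ldots,i_n)$, each prepended by the factor $S_{i_1;a}^{[s_1]}$; by the inductive hypothesis applied to $I'$ this family assembles precisely into $S_{i_1;a}^{[s_1]}R_{(i_2,\ldots,i_n);a}$, the first term of Eq.~(\ref{schurrec2}). Those with no cut after position $1$ merge $i_1$ and $i_2$ into a single leading block and are in bijection with coarsenings of $I''=(i_1+i_2,i_3,\ldots,i_n)$; by the inductive hypothesis applied to $I''$ this family assembles into $\pm R_{(i_1+i_2,i_3,\ldots,i_n);a}$. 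Matching these two families against the two terms of the recursion is the core of the argument.

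Two verifications then remain, and I expect them to be the main obstacle, namely the shift and sign bookkeeping. For shifts, one checks from Eq.~(\ref{ribbonshifts}) that $s^{I'}_k=s^{I}_{k+1}$, and that $s^{I''}_1=s^{I}_1$ with $s^{I''}_k=s^{I}_{k+1}$ for $k\ge 2$, so that the per-factor shifts attached to each block are preserved under both bijections (here Lemma~\ref{shiftauto} may be invoked to treat $\phi^{[s]}$ as an algebra morphism where convenient, since in the formula the shifts are attached factor by factor). For signs, a coarsening of $I$ into $k+1$ blocks carries the coefficient $(-1)^{n-1-k}$, equivalently $(-1)^{n-1}(-1)^{k}$; under the first bijection both the number of blocks and the length drop by one, leaving this coefficient unchanged, matching $S_{i_1;a}^{[s_1]}R_{(i_2,\ldots,i_n);a}$, whereas under the second bijection the length drops by one but the block count is unchanged, flipping the sign, which is exactly compensated by the explicit minus sign in Eq.~(\ref{schurrec2}). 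The delicate point is to keep the global factor $(-1)^{n-1}$ consistent across the two recursion terms as the length decreases, and to confirm that the shift indices relabel correctly precisely when the first two parts are merged.
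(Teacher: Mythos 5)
Your proposal is correct, but it takes a genuinely different route from the paper. The paper's proof is a one-liner: it invokes \cite{Getal}*{Proposition 2.6}, the general expansion of a quasideterminant of an almost-triangular matrix (ones on the subdiagonal, zeros below) as a signed sum of products of its entries, and reads formula (\ref{polynomialformula}) off directly from the matrix in Definition \ref{Schurdef}. You instead run an induction on the length of $I$ using the recursion (\ref{schurrec2}), splitting the coarsenings of $I$ according to whether a cut occurs after the first part; your two bijections, the shift identities $s^{I'}_k=s^{I}_{k+1}$ for $I'=(i_2,\ldots,i_n)$ and $s^{I''}_1=s^{I}_1$, $s^{I''}_k=s^{I}_{k+1}$ ($k\geq 2$) for $I''=(i_1+i_2,i_3,\ldots,i_n)$, and the sign bookkeeping all check out, and there is no circularity since Lemma \ref{RIrecursion} is established beforehand without using (\ref{polynomialformula}) (its proof only needs polynomiality of the $R_{I;a}$, not the explicit expansion). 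What the paper's route buys is brevity and a structural explanation of the formula as a path expansion of the quasideterminant; what your route buys is a self-contained combinatorial argument (modulo the recursion) that makes the coarsening structure and the role of the shifts explicit. One further point in your favor: your reading of the coefficient of a term with $k$ cuts as $(-1)^{n-1-k}$ is the correct one, consistent with the recursion and with the paper's examples such as $R_{(i,j);a}=-S_{i+j;a}^{[i]}+S_{i;a}^{[i]}S_{j;a}$; the literal exponent $(-1)^{k}$ in the statement agrees with this only for odd $n$ and should be understood as counting merged boundaries rather than cuts.
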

\begin{proof}
This is a direct consequence of \cite{Getal}*{Proposition 2.6}.
\end{proof}

\begin{example}\begin{align*}
R_{(i,j);a}&=-S_{i+j;a}^{[i]}+S_{i;a}^{[i]}S_{j;a},\\
R_{(i,j,k);a}&=S_{i+j+k;a}^{[i+j]}-S_{i;a}^{[i+j]}S_{j+k;a}^{[j]}-S_{i+j;a}^{[i+j]}S_{k;a}+S_{i;a}^{[i+j]}S_{j;a}^{[j]}S_{k;a}.
\end{align*}
\end{example}

\begin{theorem}[Shifted MacMahon Formula]\label{products}
The set of $a$-shifted ribbon Schur functions $R_{I;a}$, where $I$ is any composition, forms a $\mZ[a]$-basis for $\Sym^a_{\mZ}$.

For two compositions $I=(i_1,\ldots,i_n)$ and $J=(j_1,\ldots, j_m)$ we obtain the product formula
\begin{align}\label{macmahon1}
R_{I;a}^{[d_J-j_m+i_n]}R_{J;a}&=R_{I\cdot J;a}+R_{I\triangleright J;a}, &\text{if } m\geq 2,\\
R_{I;a}^{[i_n]}S_{j_1}&=R_{I\cdot (j_1);a}+R_{I\triangleright (j_1);a}^{[i_n]}, &\text{if }m=1.\label{macmahon2}
\end{align}
Here, $$I\cdot J=(i_1,\ldots, i_n,j_1,\ldots, j_m), \qquad I\triangleright J=(i_1,\ldots, i_{n-1},i_n+j_1,j_2,\ldots, j_m).$$
\end{theorem}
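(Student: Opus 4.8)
The plan is to treat the two assertions separately: the basis statement by comparing $\Sym^a_{\mZ}$ with the unshifted theory of \cite{Getal} via an associated-graded argument, and the product formulas by induction on the length $m=\ell(J)$ of the second composition, using the recursion of Lemma \ref{RIrecursion}.

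For the basis statement, I filter $\Sym^a_{\mZ}$ by the total degree in the generators (with $\deg S_{k;a}=k$), so that $\gr\Sym^a_{\mZ}$ is again the free $\mZ[a]$-algebra on symbols $\bar S_k$ of degree $k$, that is, a copy of the unshifted $\Sym_{\mZ[a]}$. By Lemma \ref{sumlemma} each shift $S_{k;a}^{[s]}$ equals $S_{k;a}$ plus a $\mZ[a]$-combination of the $S_{k-\nu;a}$ with $\nu\ge 1$, hence agrees with $S_{k;a}$ modulo strictly lower degree. By Eq. (\ref{polynomialformula}), $R_{I;a}$ is a signed sum of products of shifts $S_{k;a}^{[s]}$, one for each coarsening of $I$, and every such product has total degree exactly $d_I$ before the shifts are expanded. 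Since expanding a shift only introduces terms of strictly smaller degree while preserving the leading term, the degree-$d_I$ part of $R_{I;a}$ is obtained by deleting all shifts, which is precisely the unshifted ribbon Schur function $R_I$; in other words $\gr(R_{I;a})=R_I$ in $\gr\Sym^a_{\mZ}$. As the $R_I$ form a homogeneous $\mZ[a]$-basis of $\Sym_{\mZ[a]}$ by \cite{Getal}*{Section 3.2}, and each filtration piece is a finite-rank free $\mZ[a]$-module, the standard lifting of a homogeneous basis of the associated graded shows that the $R_{I;a}$ form a $\mZ[a]$-basis of $\Sym^a_{\mZ}$.

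For the product formula I induct on $m$, first recording the additivity $\phi^{[s]}\phi^{[t]}=\phi^{[s+t]}$ of the shift automorphisms, which is immediate from the generating-series definition (\ref{phidef}). The base case $m=1$, namely $R_{I;a}^{[i_n]}S_{j_1}=R_{I\cdot(j_1);a}+R_{I\triangleright(j_1);a}^{[i_n]}$, is exactly Eq. (\ref{schurrec1}) applied to $I\cdot(j_1)=(i_1,\ldots,i_n,j_1)$ and rearranged. For the step with $m\ge 2$ I set $J'=(j_1,\ldots,j_{m-1})$ and $J''=(j_1,\ldots,j_{m-2},j_{m-1}+j_m)$, both of length $m-1$, and expand $R_{J;a}=R_{J';a}^{[j_{m-1}]}S_{j_m;a}-R_{J'';a}^{[j_{m-1}]}$ by Eq. (\ref{schurrec1}). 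A direct computation gives $d_{J'}-j_{m-1}+i_n=d_{J''}-(j_{m-1}+j_m)+i_n=(d_J-j_m+i_n)-j_{m-1}$, so that, using additivity together with the fact that each $\phi^{[j_{m-1}]}$ is an algebra map, both products $R_{I;a}^{[d_J-j_m+i_n]}R_{J';a}^{[j_{m-1}]}$ and $R_{I;a}^{[d_J-j_m+i_n]}R_{J'';a}^{[j_{m-1}]}$ become $\phi^{[j_{m-1}]}$ applied to a product to which the induction hypothesis (for $J'$, resp. $J''$) applies. Substituting the inductive expressions and applying $\phi^{[j_{m-1}]}$, the left-hand side becomes $(R_{I\cdot J';a}^{[j_{m-1}]}+R_{I\triangleright J';a}^{[j_{m-1}]})S_{j_m;a}-(R_{I\cdot J'';a}^{[j_{m-1}]}+R_{I\triangleright J'';a}^{[j_{m-1}]})$; regrouping the four terms into a ``$\cdot$'' part and a ``$\triangleright$'' part, each part is recognized as the right-hand side of Eq. (\ref{schurrec1}) applied to $I\cdot J$, resp. $I\triangleright J$ (splitting off $j_m$), yielding $R_{I\cdot J;a}+R_{I\triangleright J;a}$.

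The step I expect to be the main obstacle is the transition at $m=2$, where the induction hypothesis for the length-one compositions $J'=(j_1)$ and $J''=(j_1+j_2)$ is the base-case formula (\ref{macmahon2}), whose $\triangleright$-term already carries a shift $[i_n]$ rather than being unshifted. I would treat $m=2$ by a separate verification, checking that after applying $\phi^{[j_1]}$ the $\triangleright$-terms acquire the shift $[i_n+j_1]$, and that this is exactly the shift produced by Eq. (\ref{schurrec1}) for $I\triangleright J=(i_1,\ldots,i_{n-1},i_n+j_1,j_2)$ when $j_2$ is split off over the entry $i_n+j_1$. This matching of the exponent $[i_n+j_1]$ on both sides is what closes the base transition, and it is precisely what forces the different shifts appearing in the two cases (\ref{macmahon1}) and (\ref{macmahon2}) of the statement; getting this exponent bookkeeping right is the delicate part of the whole argument.
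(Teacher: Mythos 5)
Your basis argument is correct and is essentially the paper's own proof: both come down to the triangularity $R_{I;a}=S_{i_1;a}\cdots S_{i_n;a}+(\text{lower degree terms})$, which follows from Eq. (\ref{polynomialformula}) and Lemma \ref{sumlemma}; you phrase it via the degree filtration and $\gr(R_{I;a})=R_I$, the paper via leading monomials in the degree-lexicographic order, and the lifting is valid since each filtration piece is a finite-rank free $\mZ[a]$-module.

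The product-formula argument has a genuine gap: the additivity $\phi^{[s]}\phi^{[t]}=\phi^{[s+t]}$, which you declare immediate from Eq. (\ref{phidef}), is false for a general sequence $a$. By Lemma \ref{sumlemma} one has $\phi^{[1]}(S_{2;a})=S_{2;a}+(a_1-a_0)S_{1;a}$ and $\phi^{[1]}(S_{1;a})=S_{1;a}$, hence
\begin{equation*}
\phi^{[1]}\bigl(\phi^{[1]}(S_{2;a})\bigr)=S_{2;a}+2(a_1-a_0)S_{1;a},
\qquad
\phi^{[2]}(S_{2;a})=S_{2;a}+(a_1-a_{-1})S_{1;a},
\end{equation*}
which agree only when $a_1-a_0=a_0-a_{-1}$: additivity holds for equidistant sequences (such as $a_i=i-1$ or $a=0$) but not in the generality of the theorem. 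For the same reason your identification $R_{J;a}^{[s]}=\phi^{[s]}(R_{J;a})$ is unavailable: in the theorem and in Lemma \ref{RIrecursion} the superscript $[s]$ means that every row-shift in the quasideterminant of Definition \ref{Schurdef} is raised by $s$ (this is how the paper's proof specializes its generalized functions of Eq. (\ref{generalizedribbon})), and since the entries of $R_{J;a}$ already carry shifts $S_{\cdot;a}^{[t_k]}$, equating the row-shifted quasideterminant with $\phi^{[s]}(R_{J;a})$ is exactly the false identity $\phi^{[s]}(S_{k;a}^{[t]})=S_{k;a}^{[s+t]}$. Your base case $m=1$ is unaffected (it is Eq. (\ref{schurrec1}) read with row-shifts), but both reductions in your induction step --- rewriting $R_{I;a}^{[d_J-j_m+i_n]}R_{J';a}^{[j_{m-1}]}$ and $R_{I;a}^{[d_J-j_m+i_n]}R_{J'';a}^{[j_{m-1}]}$ as $\phi^{[j_{m-1}]}$ applied to products covered by the induction hypothesis --- collapse.

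The repair keeps your skeleton but discards $\phi$: the induction hypothesis must be strengthened so that the shifts are carried by the quasideterminant rows themselves. This is precisely what the paper does: it introduces $R_{I;a}^{[K]}$ for an arbitrary integer vector $K$ of row-shifts, proves row-shifted generalizations of both recursions of Lemma \ref{RIrecursion} by the same homological-relations computation, runs the induction on $m$ at that level, and recovers the theorem by specializing $K$ and $L$ to (uniformly shifted) standard shift vectors. Once that machinery is in place, your splitting off the \emph{last} entry of $J$ works just as well as the paper's splitting off the \emph{first} entry, and your regrouping into a ``$\cdot$'' part and a ``$\triangleright$'' part goes through; the $m=2$ subtlety you flag (the $\triangleright$-term of the $m=1$ formula carrying the extra shift $[i_n]$) is then not a separate obstacle but a symptom of the general fact that the shift bookkeeping must be done row by row throughout, not transported by an automorphism.
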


If $a$ is whole-distant (cf. Remark \ref{equidistant}), then the $R_{I;a}$ form a $\mZ$-basis for $\Sym^a_{\mZ}$.

\begin{proof}
We order monomials in $\Sym^a_{\mZ}$ according to the degree lexicographic order. That is, we have $S_{i_1;a}\ldots S_{i_n;a}>S_{j_1;a}\ldots S_{j_m;a}$ if $n>m$ and in case $n=m$ the inequality holds if $(i_1,\ldots, i_n)>(j_1,\ldots, j_m)$ in the lexicographic order. Note that Eq. (\ref{polynomialformula}) implies that 
\begin{align*}
R_{I;a}&=S_{i_1;a}^{[s_1]}S_{i_2;a}^{[s_2]}\ldots S_{i_n;a}+\text{(lower order terms)}\\
&=S_{i_1;a}S_{i_2;a}\ldots S_{i_n;a}+\text{(lower order terms)},
\end{align*}
using Lemma \ref{sumlemma} in the second equality (where all coefficients are in $\mZ[a]$).

Hence we establish a bijection between the $\mZ[a]$-basis of monomials of elementary $a$-shifted symmetric functions $S_{k;a}$ in $\Sym^a_{\mZ}$ and $a$-shifted ribbon functions $R_{I;a}$. This proves the basis claim. 

We prove a more general form of the product formula. For this, we introduce more general shifts of the $R_{I;a}$. We define, for $I=(i_1,\ldots, i_n)$ a composition and $K=(k_1,\ldots,k_n)\in \mZ^n$,
\begin{equation}\label{generalizedribbon}
R_{I;a}^{[K]}=(-1)^{n-1}\begin{vmatrix}
S_{i_1;a}^{[k_1]}&S_{i_1+i_2;a}^{[k_1]}&\hdots&S_{i_1+\ldots +i_{n-1};a}^{[k_1]}&\boxed{S_{i_1+\ldots +i_{n};a}^{[k_1]}}\\
1&S_{i_2;a}^{[k_2]}&\hdots&S_{i_2+\ldots+i_{n-1};a}^{[k_2]}&S_{i_2+\ldots+i_{n};a}^{[k_2]}\\
0&\ddots&\ddots&\vdots&\vdots\\
\vdots&\hdots&\ddots&S_{i_{n-1};a}^{[k_{n-1}]}&S_{i_{n-1}+i_n;a}^{[k_{n-1}]}\\
0&\hdots&\hdots&1&S_{i_n;a}^{[k_{n}]}
\end{vmatrix}.
\end{equation}
Clearly, setting $S=(s_1,\ldots, s_{n-1},0)$ recovers $R_{I;a}^{[S]}=R_{I;a}$.

We claim the more general product formula 
\begin{align}
R_{I;a}^{[K]}R_{J;a}^{[L]}&=R_{I\cdot J;a}^{[K,L]}+R_{I\triangleright J;a}^{[K,l_2,\ldots,l_m]},
\end{align}
for $I=(i_1,\ldots,l_n)$, $J=(j_1,\ldots, j_m)$ compositions and $K=(k_1,\ldots,k_n)$, $L=(l_1,\ldots,l_m)$ integer sequences. The proof is by induction on $m$. 
In the case $m=0$, there is nothing to show, and the case $m=1$ follows from the generalized form of Eq. (\ref{schurrec1}),
\begin{align}
R_{I\cdot j_1;a}^{[K,l_1]}&=R_{I;a}^{[K]}S_{j_1;a}^{[l_1]}-R_{I\triangleright(j_1);a}^{[K]}.
\end{align}
This follows using the same strategy as in the proof of Lemma \ref{RIrecursion}. The special case of $l_1=0$ and $K=(s_1+i_n,s_2+i_n,\ldots,s_{n-1}+i_n,i_n)$ gives Eq. (\ref{macmahon2}).
We further generalize Eq. (\ref{schurrec2}) to
\begin{equation}
R_{i_1\cdot J;a}^{[k_1,L]}=R_{i_1;a}^{[k_1]} R_{J;a}^{[L]}-R_{i\triangleright J;a}^{[k_1,l_2,\ldots, l_m]}.
\end{equation}

Assume the statement holds for all $k<m$ for some fixed $m\geq 1$. 
We derive for $J=(j_1,\ldots,j_m)$, that
\begin{align*}
R_{I;a}^{[K]}R_{J;a}^{[L]}=&R_{I;a}^{[K]}\left(S_{i_1;a}^{[l_1]}R_{(j_2,\ldots, j_{m});a}^{[l_2,\ldots,l_m]}-R_{(j_1+j_2,j_3,\ldots, j_m);a}^{[l_1,l_3,\ldots, l_m]}\right)\\
=&\left(R_{I\cdot (j_1);a}^{[K,l_1]}+R_{I\triangleright (j_1);a}^{[K]}\right)R_{(j_2,\ldots ,j_{m});a}^{[l_2,\ldots, l_m]}-R_{I;a}^{[K]}R_{(j_1+j_2,j_3,\ldots, j_m);a}^{[l_1,l_3,\ldots, l_m]}\\
=&R_{I\cdot J;a}^{[K,L]}+R_{I\cdot(j_1+j_2,\ldots,j_m);a}^{[K,l_1,l_3,\ldots,l_m]}+R_{I\triangleright J;a}^{[K,l_2,\ldots, l_m]}+R_{(i_1,\ldots, i_n+j_1+j_2,j_3,\ldots,j_m);a}^{[K,l_3,\ldots,l_m]}\\
&-R_{I\cdot(j_1+j_2,j_3,\ldots,j_m);a}^{[K,l_1,l_3,\ldots, l_m]}-R_{(i_1,\ldots, i_n+j_1+j_2,j_3,\ldots, j_m);a}^{[K,l_3,\ldots,l_m]}\\
=&R_{I\cdot J;a}^{[K,L]}+R_{I\triangleright J;a}^{[K,l_2,\ldots,l_m]}.
\end{align*}
Here, the first equality uses Eq. (\ref{schurrec2}), and the following steps use the induction step. Eq. (\ref{macmahon2}) follows by considering the case $K=(s_1+d_J-j_m+i_n,s_2+d_J-j_m+i_n,\ldots, s_{n-1}+d_J-j_m+i_n,d_J-j_m+i_n)$, $L=(l_1,\ldots,l_m)$ for $m\geq 2$.
\end{proof}

\begin{example}For $i,j,k,l\geq 1$,
\begin{align*}
R_{(i,j);a}^{[k+j]} R_{(k,l);a}&=R_{(i,j,k,l);a}+R_{(i,j+k,l);a}.
\end{align*}
\end{example}

Thus, $a$-shifted ribbon Schur functions for whole-distant parameter series give a  nonhomogeneous $\mZ$-basis for $\Sym^a_{\mZ}$. A formula for the products of $a$-shifted ribbon Schur functions can also be obtained from the proof of Theorem \ref{products}. In practice, rewriting the closed form in the basis $\lbrace R_{I;a}\rbrace_I$ can be cumbersome, as the easier examples below indicate.

\begin{corollary}For any partitions $I,J$ we have
\begin{align}
R_{I;a}R_{J;a}&=R_{I\cdot J;a}^{[s_1,\ldots, s_{n-1},0,t_1,\ldots,t_m]}+R_{I\triangleright J;a}^{[s_1,\ldots, s_{n-1},0,t_2,\ldots,t_m]},
\end{align}
where $s_k=\sum_{\nu=k}^{n-1}i_\nu$, for $k=1,\ldots, n-1$, and $t_k=\sum_{\nu=k}^{m-1}j_\nu$, for $k=1,\ldots, m-1$.
\end{corollary}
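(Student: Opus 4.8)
The plan is to obtain this as an immediate specialisation of the generalized product formula proved inside Theorem \ref{products}, namely
\begin{align*}
R_{I;a}^{[K]}R_{J;a}^{[L]}=R_{I\cdot J;a}^{[K,L]}+R_{I\triangleright J;a}^{[K,l_2,\ldots,l_m]},
\end{align*}
which holds for all compositions $I=(i_1,\ldots,i_n)$, $J=(j_1,\ldots,j_m)$ and all integer sequences $K=(k_1,\ldots,k_n)$, $L=(l_1,\ldots,l_m)$. The entire content is to choose $K$ and $L$ so that the two generalized shifted ribbon functions on the left collapse to the ordinary $R_{I;a}$ and $R_{J;a}$, and then simply transcribe the shift sequences produced on the right.

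First I would invoke the normalisation recorded immediately after Eq. (\ref{generalizedribbon}): taking the shift equal to $(s_1,\ldots,s_{n-1},0)$ with $s_k=\sum_{\nu=k}^{n-1}i_\nu$ yields $R_{I;a}^{[s_1,\ldots,s_{n-1},0]}=R_{I;a}$. Applying the same identity to $J$, I would set $L=(t_1,\ldots,t_{m-1},0)$ with $t_k=\sum_{\nu=k}^{m-1}j_\nu$, so that $R_{J;a}^{[L]}=R_{J;a}$. With the empty-sum convention $t_m=\sum_{\nu=m}^{m-1}j_\nu=0$ the last entry of $L$ is exactly $t_m$, allowing me to write uniformly $K=(s_1,\ldots,s_{n-1},0)$ and $L=(t_1,\ldots,t_m)$.

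Substituting these choices, the left-hand side becomes $R_{I;a}R_{J;a}$, the concatenation $[K,L]$ equals $[s_1,\ldots,s_{n-1},0,t_1,\ldots,t_m]$, and the truncated sequence $[K,l_2,\ldots,l_m]$ equals $[s_1,\ldots,s_{n-1},0,t_2,\ldots,t_m]$, which are precisely the two shifts in the statement; a length count ($n+m$ entries for $I\cdot J$, and $n+m-1$ entries for $I\triangleright J$) confirms consistency. I do not anticipate a genuine obstacle, since the argument is pure bookkeeping; the only points needing attention are the uniform $t_m=0$ convention and the verification that $(s_1,\ldots,s_{n-1},0)$ is indeed the normalising shift recovering the plain ribbon Schur function, rather than an unrelated shift sequence.
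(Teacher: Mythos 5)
Your proposal is correct and follows exactly the route the paper intends: the corollary is an immediate specialization of the generalized product formula $R_{I;a}^{[K]}R_{J;a}^{[L]}=R_{I\cdot J;a}^{[K,L]}+R_{I\triangleright J;a}^{[K,l_2,\ldots,l_m]}$ established in the proof of Theorem \ref{products}, with $K=(s_1,\ldots,s_{n-1},0)$ and $L=(t_1,\ldots,t_{m-1},0)$ being the normalizing shifts that recover $R_{I;a}$ and $R_{J;a}$. Your bookkeeping, including the empty-sum convention $t_m=0$ and the length counts for the two resulting shift sequences, is accurate.
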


Hence, the leading coefficients of the product of $a$-shifted ribbon Schur functions satisfies the same MacMahon type formula as in the unshifted case, see \cite{Getal}*{Proposition 3.13}, namely
$$R_{I;a}R_{j:a}=R_{I\cdot J;a}+R_{I\triangleright J;a} +\text{(lower degree terms)}.$$

\begin{example}
\begin{align}
\Lambda_{k;a}S_{l;a}&=R_{(1^k,l);a}+R_{(1^{k-1},l+1);a}^{[1]}+(a_1-a_k)\left(R_{(1^{k-1},l);a}+R_{(1^{k-2},l+1);a}\right),\\
S_{k;a}S_{l;a}&=\sum_{\nu=0}^{k-1}\abinom{\nu+k-1}{\nu}_{1-k}^{\tau^{k-\nu}a}\left(R_{(k-\nu,l);a}+S_{(k-\nu+l);a}^{[k-\nu]} \right), &\text{for }k\geq 2,\\
\Lambda_{k;a}\Lambda_{l;a}&=\sum_{\nu=0}^{k-1}\abinom{l}{\nu}_{k-\nu}^{\tau^{-l}\hat{a}}\left(\Lambda_{k-\nu+l;a}+R_{(1^{k-\nu-1},2,1^{l-1});a}\right), &\text{for }l\geq 2.
\end{align}
\end{example}

Let $I$ be a composition, and denote by $I^{\sim}=(j_1,\ldots, j_m)$ the conjugate composition, cf. \cite{Getal}*{Section 3.2}. That is, the ribbon diagram corresponding to $I$ is reflected in the main diagonal. With this notation, we obtain the following analogue of the classical N\"agelsberg--Kostka formula.

\begin{proposition}\label{ribboninL}
For any composition $I$ with conjugate composition $I^\sim=(j_1,\ldots, j_m)$, we have
\begin{equation}\label{dualribbon}
R_{I;a}^{[i_n-1]}=(-1)^{m-1}\begin{vmatrix}
\Lambda_{j_m;a}^{[t_1]}&\Lambda_{j_{m-1}+j_m;a}^{[t_{2}]}&\hdots&\Lambda_{j_2+\ldots +j_{m};a}^{[t_{m-1}]}&\boxed{\Lambda_{j_1+\ldots +j_{m};a}}\\
1&\Lambda_{j_{m-1};a}^{[t_{2}]}&\hdots&\Lambda_{j_2+\ldots+j_{m-1};a}^{[t_{m-1}]}&\Lambda_{j_1+\ldots+j_{m-1};a}\\
0&\ddots&\ddots&\vdots&\vdots\\
\vdots&\hdots&\ddots&\Lambda_{j_{2};a}^{[t_{m-1}]}&\Lambda_{j_{1}+j_2;a}\\
0&\hdots&\hdots&1&\Lambda_{j_1;a}
\end{vmatrix},
\end{equation}
where $t_k=\sum_{\nu=1}^{m-k} j_{\nu}$. 
\end{proposition}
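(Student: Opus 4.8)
The plan is to deduce this N\"agelsbach--Kostka formula from the Jacobi--Trudi-type Definition \ref{Schurdef} by transporting it across the duality anti-isomorphism $\omega_{\hat a}\colon \Sym^{\hat a}\to \Sym^a$ of Proposition \ref{involution} (recall $\hat{\hat a}=a$). Two ingredients drive this. First, $\omega_a$ reverses shifts: combining $\omega_a(S_{k;a})=\Lambda_{k;\hat a}$ with the explicit expansions of Lemma \ref{sumlemma} and matching the coefficients $\abinom{s}{\nu}_{k}^{a}$ gives $\omega_a(S_{k;a}^{[s]})=\Lambda_{k;\hat a}^{[-s]}$ and $\omega_a(\Lambda_{k;a}^{[s]})=S_{k;\hat a}^{[-s]}$; equivalently $\omega_a\phi_a^{[s]}=\phi_{\hat a}^{[-s]}\omega_a$ on generators. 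Second, since $\omega$ is an anti-algebra morphism and the quasideterminants here are polynomial in their entries, $\omega$ transposes quasideterminants, $\omega(|M|_{ij})=|(\omega M)^{t}|_{ji}$, where $\omega M$ denotes the entrywise image; this follows from the characterization $|M|_{ij}^{-1}=(M^{-1})_{ji}$, as in \cite{GGRW}.

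With these, I would apply $\omega_{\hat a}$ to the defining quasideterminant of $R_{I^\sim;\hat a}$ for the conjugate composition $I^\sim=(j_1,\ldots,j_m)$. The entries $S_{j_r+\ldots+j_s;\hat a}^{[\sigma_r]}$, with $\sigma_r=\sum_{\nu=r}^{m-1}j_\nu$, become $\Lambda_{j_r+\ldots+j_s;a}^{[-\sigma_r]}$, the transpose rule moves the boxed index from $(1,m)$ to $(m,1)$, and the structural $0$'s and $1$'s are preserved. A row-and-column reversal, which sends $|M|_{m1}$ to $|M^{\mathrm{rev}}|_{1m}$ and turns the resulting lower-Hessenberg array back into an upper-Hessenberg one (again via the inverse-entry characterization), then reproduces exactly the array in (\ref{dualribbon}): the subscripts $j_{m+1-b}+\ldots+j_{m+1-a}$ match, and in column $b$ the shift becomes $-\sigma_{m+1-b}$, which differs from the prescribed $t_b=\sum_{\nu=1}^{m-b}j_\nu$ by the column-independent amount $-\sigma_{m+1-b}-t_b=-\sum_{\nu=1}^{m-1}j_\nu=-(d_{I^\sim}-j_m)$. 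Hence $\omega_{\hat a}(R_{I^\sim;\hat a})$ equals the right-hand side of (\ref{dualribbon}) shifted globally by $\phi_a^{[-(d_{I^\sim}-j_m)]}$. Finally, the duality statement for ribbons (Corollary \ref{duality}) identifies $\omega_{\hat a}(R_{I^\sim;\hat a})$ with $R_{I;a}$ up to a shift; comparing this shift with the global offset $d_{I^\sim}-j_m$ and using $d_{I^\sim}=d_I$ pins the exponent to $i_n-1$, yielding (\ref{dualribbon}).

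The main obstacle is purely the bookkeeping of shifts and compositions. One must check that, after transposition and reversal, the doubly-indexed shifts $\sigma_r$ and $t_b$ collapse to a single uniform shift; this rests on the telescoping identity $\sum_{\nu=1}^{b-1}j_\nu+\sum_{\nu=b}^{m-1}j_\nu=d_{I^\sim}-j_m$, together with the additivity $\phi^{[s]}\phi^{[t]}=\phi^{[s+t]}$ of the shift automorphisms, so that a global shift may be pulled outside the quasideterminant. One also has to track the last part $j_m$ of $I^\sim$ against $i_n$ to confirm that the final exponent is exactly $i_n-1$, matching the normalization chosen in Corollary \ref{duality}. A self-contained alternative avoids $\omega$ altogether: expand both sides along the last row and column and induct on $m$ using the recursion of Lemma \ref{RIrecursion} and the hook identity (\ref{eq52}); this is more elementary, but its shift bookkeeping in the inductive step is heavier, so the duality argument is preferable.
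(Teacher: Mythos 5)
Your argument is circular. The step that closes it --- identifying $\omega_{\hat{a}}(R_{I^\sim;\hat{a}})$ with a shift of $R_{I;a}$ --- is exactly Corollary \ref{duality}, and in the paper that corollary is a \emph{consequence} of Proposition \ref{ribboninL}: its proof consists precisely of the $\omega$-transposition bookkeeping you carry out (anti-morphisms reflect the quasideterminant matrix along the anti-diagonal, $\omega_a(S_{k;a}^{[s]})=\Lambda_{k;\hat{a}}^{[-s]}$ via Lemma \ref{sumlemma}, a global shift pulled outside), applied to the already-established identity (\ref{dualribbon}). So what you have shown is that the Proposition and the Corollary are equivalent to one another --- the paper's derivation of the Corollary, run in reverse --- and you then assume one to get the other. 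Note also that nothing in your argument ever invokes the relation (\ref{lineareq}) between the $S_{k;a}$ and the $\Lambda_{k;a}$. Since $\omega_a$ merely renames one free generating set to another, no amount of transposition can, by itself, convert a quasideterminant in the $\Lambda$'s into one in the $S$'s inside the \emph{same} algebra; that conversion is the actual content of the statement. A symptom of the circularity shows up in your shift bookkeeping: taking the Corollary's stated shift $[j_m-d_J+i_n]$ at face value, your chain produces $R_{I;a}^{[i_n]}$, not $R_{I;a}^{[i_n-1]}$, and this off-by-one cannot be settled by comparing the two statements against each other, because neither normalization has been established independently in your argument.

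The missing substantive ingredient is what the paper supplies. From Eq. (\ref{lineareq}) one builds the mutually inverse triangular matrices $B_{ij}=S_{j-i;a}^{[d-i]}$ and $A_{ij}=(-1)^{d+1-j}\Lambda_{j-i;a}^{[d+1-j]}$ of size $d_I+1$, and then applies Jacobi's theorem for quasi-minors of inverse matrices (\cite{Getal}*{Theorem 2.14}) to the index set $M=\lbrace i_1+1, i_1+i_2+1,\ldots\rbrace$ determined by $I$; the conjugate composition $I^\sim$ arises combinatorially from the complementary index set, and the shift $[i_n-1]$ falls out of the explicit form of $B$. If you want a proof in the spirit of your proposal, you would first need an independent proof of Corollary \ref{duality} --- e.g.\ by induction on the length of $I$ using the product formula of Theorem \ref{products} together with the fact that conjugation exchanges the concatenation products, $(I\cdot J)^\sim=J^\sim\triangleright I^\sim$ and $(I\triangleright J)^\sim=J^\sim\cdot I^\sim$, starting from $\omega_a(S_{n;a})=\Lambda_{n;\hat{a}}$. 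Your sketched alternative (induction on $m$ via Lemma \ref{RIrecursion} and Eq. (\ref{eq52})) is also plausible, but since you did not carry it out, it cannot repair the submitted proof.
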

Eq. (\ref{dualribbon}) generalizes Eq. (\ref{SintermsofL}), and provides an alternative may to define $a$-shifted ribbon Schur functions, as polynomials in the $\Lambda_{k;a}$.
\begin{proof}
The proposition follows by applying Jacobi's theorem for the quasi-minors of the inverse matrix, see \cite{Getal}*{Theorem 2.14}, to the matrix used in the definition of $R_{I;a}$. Indeed, for given $d$, consider the $(d+1)\times (d+1)$-matrices 
$A$ and $B$, with
\begin{align*}
B_{ij}=S_{j-i;a}^{[d-i]},&& A_{ij}=(-1)^{d+1-j}\Lambda_{j-i;a}^{[d+1-j]}, 
\end{align*}
where we use the convention that $\Lambda_{-k;a}=S_{-k;a}=0$ for $k>0$. It follows from Eq. (\ref{lineareq}) that $A$ and $B$ are inverse matrices. Note that for this we use that the shift is an algebra morphism.

Now let $I=(i_1,\ldots, i_n)$ be a composition and consider $A$, $B$ of size $d_I+1$. Consider the subset $M=\lbrace i_1+1,i_1+i_2+1,\ldots, i_1+i_2+\ldots+i_{n-1}+1\rbrace$. Applying Jacobi's Theorem for quasideterminants as stated in \cite{Getal}*{Proposition 2.14} to this set $M$, with $L=M$, $i=d_I+1$ and $j=1$ gives $$(-1)^{n-1}|B_{M\cup \lbrace j\rbrace,L\cup \lbrace i\rbrace}|_{j,i}=R_{I;a}^{[i_n-1]}.$$
On the other hand, this $a$-shifted ribbon Schur function equals
\begin{align*}
(-1)^{n-1}\left|A_{P\cup\lbrace i\rbrace, Q\cup\lbrace j\rbrace}\right|_{d_I+1,1}^{-1}&=(-1)^n\left(\left|A_{P\cup\lbrace i\rbrace, Q\cup\lbrace j\rbrace}^{{1,1}}\right|_{d_I+1,d_I+1}\cdot \left|A_{P\cup\lbrace i\rbrace, Q\cup\lbrace j\rbrace}^{d_I+1,1}\right|_{1,d_I+1}^{-1}\right)^{-1}\\
&=(-1)^{n}\left|A_{P\cup\lbrace i\rbrace, Q\cup\lbrace j\rbrace}^{1,d_I+1}\right|_{1,d_I+1}.
\end{align*}
First, we observe that $P\cup\lbrace d_I+1\rbrace=Q\cup\lbrace 1\rbrace$, which implies that this minor of $A$ only has entries $\pm 1$ on the diagonal, and zeros below the diagonal by the definition of $A$. Next, we used expansion along the first column in the second equality. There is only one nonzero entry in this column, and $\pm 1$ on the diagonal of this minor quasideterminant of $A$. 

We need the following combinatorial interpretation of $I^{\sim}$. Consider the set $Q\cup\lbrace d_I+1\rbrace=(\mu_1,\ldots, \mu_m)$ as above in increasing order and define $j_i=m_i-m_{i-1}$, with $m_0=1$. Then $J=(j_m,\ldots, j_1)$. This implies that
$$(-1)^{n}\left|A_{P\cup\lbrace i\rbrace, Q\cup\lbrace j\rbrace}^{1,d_I+1}\right|_{1,d_I+1}=(-1)^{m-1}|C_{1,d_I+1}|_{1,d_I+1},$$
where $C$ is the matrix of the right hand side of  Eq. (\ref{dualribbon}).
\end{proof}

\begin{example}
\begin{align*}
R_{(2,1,1);a}&=\begin{vmatrix}
S_{2;a}^{[3]}&S_{3;a}^{[3]}&\boxed{S_{4;a}^{[3]}}\\
1&S_{1;a}^{[1]}&S_{2;a}^{[1]}\\
0&1&S_{1;a}
\end{vmatrix}=(-1)\begin{vmatrix}
\Lambda_{1;a}^{[3]}&\boxed{\Lambda_{4;a}}\\
1&\Lambda_{3;a}
\end{vmatrix},\\
R_{(1,3,2,1);a}&=(-1)\begin{vmatrix}
S_{1;a}^{[6]}&S_{4;a}^{[6]}&S_{6;a}^{[6]}&\boxed{S_{7;a}^{[6]}}\\
1&S_{3;a}^{[5]}&S_{5;a}^{[5]}&S_{6;a}^{[5]}\\
0&1&S_{2;a}^{[2]}&S_{3;a}^{[2]}\\
0&0&1&S_{1;a}
\end{vmatrix}=(-1)\begin{vmatrix}
\Lambda_{2;a}^{[5]}&\Lambda_{3;a}^{[4]}&\Lambda_{5;a}^{[2]}&\boxed{\Lambda_{7;a}}\\
1&\Lambda_{1;a}^{[4]}&\Lambda_{3;a}^{[2]}&\Lambda_{5;a}\\
0&1&\Lambda_{2;a}^{[2]}&\Lambda_{4;a}\\
0&0&1&\Lambda_{2;a}
\end{vmatrix}.
\end{align*}
\end{example}

\begin{corollary}[Duality]\label{duality}
The anti-algebra isomorphism $\omega_a\colon \Sym^a\to \Sym^{\hat{a}}$ from Proposition \ref{involution} satisfies the equation
\begin{align}
\omega_a\left(R_{I;a}\right)=R_{I^\sim;\hat{a}}^{[j_m-d_J+i_n]},
\end{align}
where $I=(i_1,\ldots,i_n)$ and $I^\sim=(j_1,\ldots, j_m)$.
\end{corollary}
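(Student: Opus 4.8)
The plan is to prove the duality formula
$$\omega_a\left(R_{I;a}\right)=R_{I^\sim;\hat{a}}^{[j_m-d_J+i_n]}$$
by combining the N\"agelsbach--Kostka formula of Proposition \ref{ribboninL} with the defining property of the anti-algebra isomorphism $\omega_a$. The key observation is that $\omega_a$ reverses products (it is an anti-algebra morphism) and interchanges the two families of generators, $\omega_a(S_{k;a})=\Lambda_{k;\hat{a}}$ and $\omega_a(\Lambda_{k;a})=S_{k;\hat{a}}$, by Proposition \ref{involution}. Since $R_{I;a}$ is a polynomial in the entries $S_{k;a}^{[l]}$, and since $\omega_a$ intertwines the shift structure, applying $\omega_a$ to the quasideterminant defining $R_{I;a}$ should transform it into a quasideterminant in the $\Lambda_{k;\hat{a}}$, which is exactly the shape appearing on the right-hand side of Eq. (\ref{dualribbon}) for the conjugate composition.

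First I would establish precisely how $\omega_a$ interacts with the shift automorphisms $\phi^{[s]}$. The generators $S_{k;a}^{[l]}$ are, by Lemma \ref{sumlemma}, $\mZ[a]$-linear combinations of the $S_{k;a}$ with coefficients built from the $\abinom{\cdot}{\cdot}^{\cdot}$ symbols. I expect the correct statement to be that $\omega_a$ carries $S_{k;a}^{[l]}$ to some shift $\Lambda_{k;\hat{a}}^{[l']}$, where the new shift $l'$ is determined by the relation $\widehat{\tau^{-s}a}=\tau^s\hat{a}$ already used in the proof of Lemma \ref{shiftauto}. This is the bookkeeping heart of the argument: one must track how the shift indices $s_1,\ldots,s_{n-1}$ attached to the rows of the $R_{I;a}$ matrix are transformed under $\omega_a$ and then matched against the indices $t_1,\ldots,t_{m-1}$ appearing in Eq. (\ref{dualribbon}). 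Working with the apex shift $R_{I;a}^{[i_n-1]}$ provided by Proposition \ref{ribboninL} should make the two sides line up, since that proposition already expresses a specific shift of $R_{I;a}$ as a quasideterminant in the $\Lambda_{j;a}$.

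The cleanest route, I think, is to \emph{not} attack the quasideterminant entrywise but rather to use Proposition \ref{ribboninL} as a black box. That proposition writes $R_{I;a}^{[i_n-1]}$ as an $(-1)^{m-1}$-signed quasideterminant whose entries are shifts $\Lambda_{j_m+\cdots;a}^{[t_\bullet]}$; applying $\omega_a$ to \emph{that} identity turns the $\Lambda$-entries into $S$-entries over $\hat a$ and, after reversing the order induced by the anti-morphism, should yield precisely the quasideterminant defining $R_{I^\sim;\hat a}$ up to a global shift. Comparing the residual shift with the stated exponent $j_m-d_J+i_n$ then finishes the proof. Concretely, I would (i) apply $\omega_a$ to Eq. (\ref{dualribbon}) for the composition $I$, (ii) use $\omega_a(\Lambda_{k;a})=S_{k;\hat a}$ together with the shift-compatibility from step one to identify the image, and (iii) recognize the result as a shifted $R_{I^\sim;\hat a}$, reading off the shift exponent from the indices $t_k=\sum_{\nu=1}^{m-k}j_\nu$ and the relation $d_{I^\sim}=d_I$.

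The main obstacle will be the shift-index arithmetic in step (ii): one must verify that the anti-algebra morphism, which reverses the order of the quasideterminant's rows and columns, transforms the shift pattern $(t_1,\ldots,t_{m-1})$ of the $\Lambda$-quasideterminant into exactly the pattern $(s_1,\ldots,s_{n-1})$ (up to the uniform shift $j_m-d_J+i_n$) demanded by the definition of $R_{I^\sim;\hat a}$. Because $\omega_a$ is an anti-morphism acting on a quasideterminant, care is needed to confirm that transposing the defining matrix and reversing the composition are compatible with how quasideterminants behave under transposition, and that the dual-parameter substitution $a\mapsto\hat a$ correctly accounts for the direction of all shifts via $\widehat{\tau^{-s}a}=\tau^s\hat a$. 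I anticipate that checking a small case, such as $I=(2,1,1)$ against the worked example preceding this corollary, will be the most reliable way to pin down the exact shift exponent and sign before committing to the general bookkeeping.
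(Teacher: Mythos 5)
Your proposal follows essentially the same route as the paper's proof: apply $\omega_a$ to the N\"agelsbach--Kostka identity of Proposition \ref{ribboninL}, use the generator exchange from Proposition \ref{involution} together with the shift compatibility $\omega_a\bigl(S_{k;a}^{[s]}\bigr)=\Lambda_{k;\hat{a}}^{[-s]}$ (which, as you anticipate, comes from $\widehat{\tau^{-s}a}=\tau^{s}\hat{a}$ and Lemma \ref{sumlemma}), and account for the anti-morphism reflecting the defining matrix along the anti-diagonal so that the result is recognized as the shifted $R_{I^\sim;\hat{a}}$. This is precisely the paper's argument, so your plan is correct and not a genuinely different approach.
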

\begin{proof}
After application of $\omega_a$, this follows --- under  use of Proposition \ref{involution} --- 
 from Proposition \ref{ribboninL}. Note that these $a$-shifted ribbon Schur functions are polynomial in their entries and that switching to the opposite algebra in $|A|_{1n}$ reflects the matrix $A$ along the anti-diagonal. Further, we use that $\omega_a(S_{k;a}^{[s]})=\Lambda_{k;\hat{a}}^{[-s]}$, which follows from Lemma \ref{sumlemma}.
\end{proof}

\begin{example}
For $I=(2,2,3,2)$, $I^\sim=(1,3,2,2,1)$, we see that
\begin{align*}
\omega_a\left(R_{I;a}\right)=-\omega_a\left(\begin{vmatrix}
S_{2;a}^{[7]}&S_{4;a}^{[7]}&S_{7;a}^{[7]}&\boxed{S_{9;a}^{[7]}}\\
1&S_{2;a}^{[5]}&S_{5;a}^{[5]}&S_{7;a}^{[5]}\\
0&1&S_{3;a}^{[3]}&S_{5;a}^{[3]}\\
0&0&1&S_{2;a}
\end{vmatrix}\right)=\begin{vmatrix}
S_{1;a}^{[1]}&S_{4;a}^{[1]}&S_{6;a}^{[1]}&S_{8;a}^{[1]}&\boxed{S_{9;a}^{[1]}}\\
1&S_{3;a}&S_{5;a}&S_{7;a}&S_{8;a}\\
0&1&S_{2;a}^{[-3]}&S_{4;a}^{[-3]}&S_{5;a}^{[-3]}\\
0&0&1&S_{2;a}^{[-5]}&S_{3;a}^{[-5]}\\
0&0&0&1&S_{1;a}^{[-7]}
\end{vmatrix}.
\end{align*}
\end{example}

Note that Corollary \ref{duality} is a main justification for the use of the shift parameters $s_1,\ldots,s_n-1$ in the definition of the $R_{I;a}$. In particular, these shifts are determined by requiring a definition of $R_{I;a}$ which is preserved under $\omega_a$ up to shift and recovers $S_{n;a}$, $\Lambda_{n;a}$ as special cases without adjustment, as well as recovering the unshifted ribbon Schur functions $R_I$ if $a=0$. 


\subsection{Noncommutative Shifted Power Sums}\label{powersection}

The hook formula Eq. (\ref{eq52}) suggest, in analogy with \cite{Getal}*{Eq. (53)}, a definition of $a$-shifted power sums. These provide an alternative set of free algebra generators for $\Sym^a$.

\begin{definition}
For $n\geq 0$, we define the \emph{$a$-shifted power sum}
\begin{align}
\Psi_{n;a}&=\sum_{k=0}^{n-1}(-1)^{k}(n-k)\Lambda_{k;a}^{[n-k]}S_{n-k;a}^{[n-k-1]}.
\end{align}
\end{definition}

The definition is justified, in parts, by the following analogue of \cite{Getal}*{Corollary 3.14}, which is a corollary of Eq. (\ref{eq52}).

\begin{corollary}
\begin{align}
\Psi_{n;a}&=\sum_{k=0}^{n-1} (-1)^{k}R_{(1^{k},n-k);a}^{[n-k-1]},\\
\Psi_{n;a}&=\sum_{k=1}^n(-1)^{k-1}k \Lambda_{k;a}^{[n-k]}S_{n-k;a}^{[n-k-1]}.\label{PsiLrel}
\end{align}
\end{corollary}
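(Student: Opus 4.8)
The plan is to prove both identities from the definition of $\Psi_{n;a}$, which is itself the middle expression we must reconcile with the two displayed forms. The second identity, Eq.~(\ref{PsiLrel}), is the closest to the definition, so I would dispatch it first and then derive the ribbon expression from Eq.~(\ref{eq52}).

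First I would establish Eq.~(\ref{PsiLrel}). The definition reads $\Psi_{n;a}=\sum_{k=0}^{n-1}(-1)^k(n-k)\Lambda_{k;a}^{[n-k]}S_{n-k;a}^{[n-k-1]}$. Reindexing by $m=n-k$ (so $k=n-m$, with $k=0,\ldots,n-1$ corresponding to $m=n,\ldots,1$) turns the summand factor $(n-k)$ into $m$ and the sign $(-1)^k$ into $(-1)^{n-m}$. To match Eq.~(\ref{PsiLrel}), which sums $(-1)^{k-1}k\,\Lambda_{k;a}^{[n-k]}S_{n-k;a}^{[n-k-1]}$ over $k=1,\ldots,n$, I observe this is just the same sum written with the summation index renamed: the term indexed by $k$ in Eq.~(\ref{PsiLrel}) is $(-1)^{k-1}k\,\Lambda_{k;a}^{[n-k]}S_{n-k;a}^{[n-k-1]}$, while the definition's term indexed by $k'$ is $(-1)^{k'}(n-k')\Lambda_{k';a}^{[n-k']}S_{n-k';a}^{[n-k'-1]}$. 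These do not coincide term-by-term under the identity substitution, so I expect Eq.~(\ref{PsiLrel}) is in fact a \emph{second, independent} expression whose equality with the definition must itself be shown. The natural route is to exploit the symmetry between the roles of $\Lambda$ and $S$: I would apply the relation Eq.~(\ref{lineareq}), $\sum_{i+j=m}(-1)^j S_{i;a}^{[m-1]}\Lambda_{j;a}=\delta_{m,0}$ (suitably shifted), to rewrite the products $\Lambda_{k;a}^{[n-k]}S_{n-k;a}^{[n-k-1]}$ and collect terms, verifying that the weighting by $(n-k)$ versus $k$ differs by a total that telescopes to zero.

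Next I would derive the ribbon expression. The hook formula Eq.~(\ref{eq52}) states $\Lambda_{k;a}^{[1]}S_{l;a}=R_{(1^k,l);a}+R_{(1^{k-1},l+1);a}^{[1]}$. Applying the shift automorphism $\phi^{[n-k-1]}$ from Lemma~\ref{shiftauto} (which is an algebra morphism, so it distributes over products and commutes with forming ribbons via Eq.~(\ref{generalizedribbon})) to an appropriately shifted instance of Eq.~(\ref{eq52}) converts each product $\Lambda_{k;a}^{[n-k]}S_{n-k;a}^{[n-k-1]}$ into a sum of two shifted ribbons $R_{(1^k,n-k);a}^{[n-k-1]}$ and $R_{(1^{k-1},n-k+1);a}^{[n-k]}$. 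Summing $(-1)^{k-1}k$ times these over $k$ and reindexing the second family of ribbons so that its indices align with the first produces a telescoping cancellation, leaving precisely $\sum_{k=0}^{n-1}(-1)^k R_{(1^k,n-k);a}^{[n-k-1]}$.

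The main obstacle I anticipate is bookkeeping the shift superscripts $[\,\cdot\,]$ correctly through the hook formula and the telescoping sum: the shifts on the two ribbons produced by Eq.~(\ref{eq52}) differ, and one must verify that after applying $\phi^{[s]}$ the shift on $R_{(1^{k-1},n-k+1);a}^{[n-k]}$ at index $k$ matches the shift on $R_{(1^{(k-1)},n-(k-1));a}^{[n-(k-1)-1]}$ at index $k-1$, so that the coefficients $(-1)^{k-1}k$ and $(-1)^{k-2}(k-1)$ combine to yield the claimed coefficient $(-1)^{k-1}$. Confirming that these shift indices are genuinely equal, rather than merely close, is the delicate point; everything else is a routine reindexing, and once the shift-matching is checked the two corollary formulas follow directly from the definition and from Eq.~(\ref{eq52}).
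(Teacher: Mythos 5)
Your second step --- applying $\phi^{[n-k-1]}$ to the hook formula Eq.~(\ref{eq52}) and telescoping --- is exactly the paper's intended argument, and your shift-matching check is correct: $R_{(1^{k-1},n-k+1);a}^{[n-k]}$ is literally $R_{(1^{k-1},n-(k-1));a}^{[n-(k-1)-1]}$, so the second family of ribbons aligns with the first. The gap is in your first step. Extending both sums to $k=0,\ldots,n$ (the added terms carry the factors $n-k$ resp.\ $k$ and vanish), the difference between the defining expression for $\Psi_{n;a}$ and the right-hand side of Eq.~(\ref{PsiLrel}) is
\begin{equation*}
\sum_{k=0}^{n}(-1)^{k}\bigl((n-k)+k\bigr)\Lambda_{k;a}^{[n-k]}S_{n-k;a}^{[n-k-1]}
=n\sum_{k=0}^{n}(-1)^{k}\Lambda_{k;a}^{[n-k]}S_{n-k;a}^{[n-k-1]},
\end{equation*}
so what you actually need is the vanishing of $\sum_{k=0}^{n}(-1)^{k}\Lambda_{k;a}^{[n-k]}S_{n-k;a}^{[n-k-1]}$, which is Eq.~(\ref{otherfundamental}). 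Eq.~(\ref{lineareq}) does not deliver this: it concerns products $S_{i;a}^{[n-1]}\Lambda_{j;a}$ with the complete functions on the \emph{left}, whereas here the elementary functions stand on the left, and in this noncommutative algebra the two orderings are genuinely different identities. Duality does not bridge them either: applying $\phi^{[n-1]}\circ\omega_a$ to the sum above returns a sum of the very same shape in $\Sym^{\hat{a}}$, not one of the shape of Eq.~(\ref{lineareq}). In the paper, the left-handed identity Eq.~(\ref{otherfundamental}) (the first assertion of Lemma \ref{psieq}) is itself deduced from the hook formula Eq.~(\ref{eq52}); so your appeal to Eq.~(\ref{lineareq}) cannot be made to work without, in effect, reproving that statement by other means.

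The repair is cheap and collapses your two steps into one, which is what the paper's one-line proof amounts to. Set $T_k=\Lambda_{k;a}^{[n-k]}S_{n-k;a}^{[n-k-1]}$ and $A_k=R_{(1^{k},n-k);a}^{[n-k-1]}$. Your shifted hook formula reads $T_k=A_k+A_{k-1}$ for $1\leq k\leq n-1$, with boundary cases $T_0=A_0$ and $T_n=\Lambda_{n;a}=R_{(1^{n});a}=A_{n-1}$. Then in $\sum_{k=0}^{n-1}(-1)^k(n-k)T_k$ the coefficient of $A_j$ is $(-1)^j(n-j)+(-1)^{j+1}(n-j-1)=(-1)^j$, and in $\sum_{k=1}^{n}(-1)^{k-1}kT_k$ it is $(-1)^{j-1}j+(-1)^{j}(j+1)=(-1)^j$ (the end cases $j=0$ and $j=n-1$ check the same way). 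Hence the defining expression, the right-hand side of Eq.~(\ref{PsiLrel}), and $\sum_{j=0}^{n-1}(-1)^{j}A_j$ all coincide, proving both identities simultaneously from Eq.~(\ref{eq52}) alone. Note also that, as written, your derivation of the ribbon formula starts from the weights $(-1)^{k-1}k$ of Eq.~(\ref{PsiLrel}) and therefore inherits the gap of your first step; telescoping with the definition's weights $(-1)^{k}(n-k)$ removes that dependency.
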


\begin{example}
\begin{align*}
\Psi_{1;a}&=S_{1;a}=\Lambda_{1;a},\\
\Psi_{2;a}&=2S_{2;a}^{[1]}-\Lambda_{1;a}^{[1]}S_{1;a}=S_{2;a}^{[1]}-\Lambda_{2;a},\\
\Psi_{3;a}&=3S_{3;a}^{[2]}-2\Lambda_{1;a}^{[2]}S_{2;a}^{[1]}+\Lambda_{2;a}^{[1]}S_{1;a}=S_{3;a}^{[2]}-R_{(1,2);a}^{[1]}+\Lambda_{3;a}.
\end{align*}
\end{example}

\begin{proposition}
The $a$-shifted power sums form a set of free generators for $\Sym^{a}$.
\end{proposition}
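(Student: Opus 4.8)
The plan is to show that the elements $\Psi_{1;a},\Psi_{2;a},\ldots$ freely generate $\Sym^a$ as an associative $\Bbbk$-algebra. Since $\Sym^a=\Bbbk\langle S_{1;a},S_{2;a},\ldots\rangle$ is a free graded algebra on generators $S_{k;a}$ of degree $k$, it suffices to prove that the $\Psi_{n;a}$ form an alternative family of free generators. The standard criterion I would use is the following: if the $\Psi_{n;a}$ are homogeneous of degree $n$ and the transition between the monomial basis in the $S_{k;a}$ and the family $\Psi_{n;a}$ is triangular with invertible (here, unit) leading coefficients with respect to a suitable filtration or leading-term order, then the $\Psi_{n;a}$ also freely generate. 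Concretely, I want to exhibit that $\Psi_{n;a}=nS_{n;a}+(\text{products of lower-degree }S_{k;a})$ plus lower-order corrections, so that after extracting leading terms the change of generators is governed by an invertible scalar triangular matrix.

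The key steps, in order, are as follows. First I would record that each $\Psi_{n;a}$ is homogeneous of degree $n$: this is immediate from the definition, since $\Lambda_{k;a}^{[n-k]}$ has degree $k$ and $S_{n-k;a}^{[n-k-1]}$ has degree $n-k$, and shifts $\phi^{[s]}$ preserve degree (Lemma \ref{shiftauto}). Second, I would compute the leading term of $\Psi_{n;a}$ with respect to the degree-lexicographic order already used in the proof of Theorem \ref{products}. Using Lemma \ref{sumlemma}, every shifted generator $S_{m;a}^{[s]}=S_{m;a}+(\text{lower order})$ and $\Lambda_{m;a}^{[s]}=\Lambda_{m;a}+(\text{lower order})$, with unit leading coefficient. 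The $k=0$ term of the defining sum contributes $nS_{n;a}$, which has strictly the largest leading monomial $S_{n;a}$ among all summands (all terms with $k\geq 1$ are products of two or more generators, hence longer words, or equivalently of lower order in this grading-then-length ordering). Thus $\Psi_{n;a}=nS_{n;a}+(\text{strictly lower terms})$.

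Third, since we work over a field $\Bbbk$ of characteristic zero, the scalar $n$ is invertible, so the family $\{\Psi_{n;a}\}_{n\geq 1}$ is related to $\{S_{n;a}\}_{n\geq 1}$ by a triangular (with respect to degree and then word-length) transformation with invertible diagonal entries $n$. This lets me invert the relation degree by degree and express each $S_{n;a}$ as a noncommutative polynomial in $\Psi_{1;a},\ldots,\Psi_{n;a}$, and conversely. Because $\Sym^a$ is free on the $S_{k;a}$ and the two families generate the same subalgebra with a bijective correspondence on a homogeneous basis of monomials, I conclude that $\Sym^a=\Bbbk\langle \Psi_{1;a},\Psi_{2;a},\ldots\rangle$ is free on the $\Psi_{n;a}$. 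The main obstacle I anticipate is formalizing the triangularity argument cleanly in the noncommutative, graded setting: one must verify that the leading-monomial map sending a product of $\Psi$'s to the corresponding product of $S$'s (taking leading terms) is a bijection on the basis of monomials in each degree, which amounts to checking that passing to leading terms is multiplicative for this order and that the induced map on the free monoid of generators is the identity up to the invertible scalars $n$. Once that compatibility is established, freeness follows exactly as in the corresponding argument for ordinary noncommutative symmetric functions in \cite{Getal}*{Section 3}.
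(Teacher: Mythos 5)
Your proposal is correct and, at its core, it is the paper's own argument: a leading-term triangularity with respect to a degree-first monomial order. The paper runs it in the $\Lambda$-picture, ordering monomials in the $\Lambda_{k;a}$ (degree first, then lexicographically) and using that $\Psi_{n;a}=(-1)^{n-1}n\,\Lambda_{n;a}+(\text{lower order terms})$, which follows from Eq.\ (\ref{PsiLrel}); you run it in the $S$-picture with $\Psi_{n;a}=nS_{n;a}+(\text{lower order terms})$, which follows from the definition together with Lemma \ref{sumlemma}. This difference is immaterial, and your explicit handling of the scalar $n$ (invertible since $\Bbbk$ has characteristic zero) is, if anything, more careful than the paper's wording, which suppresses the factor $(-1)^{n-1}n$.

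Two imprecisions in your write-up deserve correction, though neither is fatal. First, your opening claim that each $\Psi_{n;a}$ is homogeneous of degree $n$ is false: the shift $\phi^{[s]}$ of Lemma \ref{shiftauto} does \emph{not} preserve the grading, only the filtration, since by Lemma \ref{sumlemma} the element $S_{k;a}^{[s]}$ is a sum of $S_{k-\nu;a}$ over $\nu\geq 0$. For instance, $\Psi_{2;a}=2S_{2;a}+2(a_1-a_0)S_{1;a}-S_{1;a}S_{1;a}$ has a degree-one component. Your later steps only use that $\Psi_{n;a}$ equals $nS_{n;a}$ plus terms strictly smaller in the chosen order, and lower-degree terms are indeed smaller, so nothing breaks. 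Second, the order you cite from the proof of Theorem \ref{products} compares the \emph{number of factors} first, with longer words larger; under that order the leading term of $\Psi_{n;a}$ is $(-1)^{n-1}(S_{1;a})^n$, so all $\Psi$-monomials of a fixed total degree would share the same leading word and the triangularity (hence linear independence) would fail. The order you actually describe afterwards --- degree first, with the single letter $S_{n;a}$ beating every longer word of the same degree, which is the order used in the paper's proof of this very proposition --- is the right one, and with it your argument is complete.
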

\begin{proof}
We use the degree lexicographical ordering on monomials in $\lbrace\Lambda_{k;a}\rbrace_{k\geq 1}$, where $\Lambda_{k;a}>\Lambda_{l;a}$ if and only if $k>l$, in $\Sym^a$. That is, when comparing two monomials in $\Lambda_{k;a}$'s, we first compare their degrees (with the grading $\deg \Lambda_{k;a}=k$). If the degrees are equal, we use the lexicographical order. This way, we observe that $$\Psi_{n;a}=\Lambda_{n;a}+\text{(lower order terms)}.$$
This way, we see that monomials in $\lbrace\Psi_{k;a}\rbrace_{k\geq 1}$, and hence the $\Psi_{k;a}$ are free algebra generators as the $\Lambda_{k;a}$ are. 
\end{proof}

We obtain the following $a$-shifted analogue of the noncommutative Wronski and Newton formula, Eq. (\ref{wronski}) and (\ref{newton}).

\begin{lemma}\label{psieq}For $n\geq 1$, we find
\begin{align}\label{otherfundamental}
S_{n;a}^{[n-1]}&=\sum_{k=1}^{n}(-1)^{k-1}\Lambda_{k;a}^{[n-k]}S_{n-k;a}^{[n-1-k]},\\
nS_{n;a}^{[n-1]}&=\sum_{k=0}^{n-1}S_{k;a}^{[n-1]}\Psi_{n-k;a}.\label{wronski},\\
n\Lambda_{n;a}&=\sum_{k=0}^{n-1}(-1)^{n-k-1}\Psi_{n-k;a}^{[k]}\Lambda_{k;a}.\label{newton}.
\end{align}
\end{lemma}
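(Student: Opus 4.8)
The plan is to prove the three identities in Lemma \ref{psieq} by exploiting the generating-series relation (\ref{powersumequation}) together with the definitions of the shifts $S_{k;a}^{[s]}$, $\Lambda_{k;a}^{[s]}$ and the power sum $\Psi_{n;a}$. First I would establish Eq. (\ref{otherfundamental}). This is really a restatement of Eq. (\ref{lineareq}): the linear recursion $\sum_{i+j=n}(-1)^jS_{i;a}^{[n-1]}\Lambda_{j;a}=\delta_{n,0}$ expresses $S_{n;a}^{[n-1]}$ as an alternating sum of products of complete homogeneous and elementary generators, and after matching the shift indices via Lemma \ref{sumlemma} (using that $\phi^{[s]}$ is an algebra morphism so shifts compose additively) one recovers exactly the claimed form. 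The main care here is bookkeeping: checking that the shift on $\Lambda_{k;a}$ is $[n-k]$ and on $S_{n-k;a}$ is $[n-1-k]$, which one can verify by applying the shift automorphism $\phi^{[-(n-1)]}$ to (\ref{lineareq}) and using $\widehat{\tau^{-s}a}=\tau^s\hat a$.

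Next I would prove the Wronski-type formula (\ref{wronski}). The cleanest route is to rewrite $\Psi_{n-k;a}=\sum_{j=1}^{n-k}(-1)^{j-1}j\,\Lambda_{j;a}^{[n-k-j]}S_{n-k-j;a}^{[n-k-j-1]}$ using Eq. (\ref{PsiLrel}), substitute into the right-hand side $\sum_{k=0}^{n-1}S_{k;a}^{[n-1]}\Psi_{n-k;a}$, and then collapse the double sum. The key algebraic input is that the shift exponents align so that $S_{k;a}^{[n-1]}\Lambda_{j;a}^{[\cdots]}$ telescopes against the recursion (\ref{otherfundamental}); concretely one reindexes by the combined degree and uses Eq. (\ref{otherfundamental}) to resum the inner alternating sum, after which the weight $j$ combines to produce the factor $n$. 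I expect this telescoping to be the main obstacle, since it requires the shift indices on every factor to match those in (\ref{otherfundamental}) precisely — one must verify that $S_{k;a}^{[n-1]}S_{n-k-j;a}^{[n-k-j-1]}$ carries exactly the shift pattern the recursion demands, and this is where the noncommutativity forbids any casual rearrangement.

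Finally, Eq. (\ref{newton}) should follow by applying the duality anti-isomorphism $\omega_a\colon \Sym^a\to\Sym^{\hat a}$ of Proposition \ref{involution} to the Wronski formula (\ref{wronski}). Because $\omega_a$ is an \emph{anti}-algebra map it reverses products, turning $\sum_k S_{k;a}^{[n-1]}\Psi_{n-k;a}$ into a sum of reversed products; using $\omega_a(S_{k;a})=\Lambda_{k;\hat a}$, $\omega_a(\Lambda_{k;a})=S_{k;\hat a}$, and the compatibility $\omega_a(S_{k;a}^{[s]})=\Lambda_{k;\hat a}^{[-s]}$ recorded in the proof of Corollary \ref{duality}, the right-hand side becomes $\sum_k(-1)^{\cdots}\Psi_{n-k;\hat a}^{[k]}\Lambda_{k;\hat a}$ up to signs. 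One then needs to check that $\omega_a$ sends $\Psi_{n;a}$ to (a shift of) $\Psi_{n;\hat a}$ with the expected sign, which follows from the symmetric definition of $\Psi_{n;a}$ in terms of $\Lambda^{[\,\cdot\,]}S^{[\,\cdot\,]}$ and the interchange of $S$ with $\Lambda$ under $\omega_a$; replacing $\hat a$ by $a$ (legitimate since $a$ was arbitrary) then yields the stated formula. The sign tracking and the verification that the shift exponents transform correctly under $\omega_a$ are the delicate points, but they are mechanical once the behavior of $\omega_a$ on shifted generators is in hand.
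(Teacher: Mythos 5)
You have the right ingredients on the table, but two of your three steps have genuine gaps, and both come from product order in the free algebra $\Sym^a$. Equation (\ref{otherfundamental}) is \emph{not} a restatement of Eq. (\ref{lineareq}): in Eq. (\ref{lineareq}) every product has the complete homogeneous generator on the left, $S_{i;a}^{[n-1]}\Lambda_{j;a}$, with a shift $[n-1]$ independent of the summation index, whereas in Eq. (\ref{otherfundamental}) the elementary generator sits on the left, $\Lambda_{k;a}^{[n-k]}S_{n-k;a}^{[n-1-k]}$, with index-dependent shifts. No shift bookkeeping can pass between the two, because $\phi^{[s]}$ is an algebra (not anti-algebra) morphism: applying $\phi^{[-(n-1)]}$ to Eq. (\ref{lineareq}) gives $\sum_{i+j=n}(-1)^jS_{i;a}\Lambda_{j;a}^{[1-n]}=\delta_{n,0}$, still with $S$ on the left. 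Equation (\ref{otherfundamental}) is really the mirror identity coming from the \emph{other} factorization $\lambda^{\hat{a}}(-t)\sigma^{a}(t)=1$ in Eq. (\ref{powersumequation}); the paper instead obtains it at once from the hook formula Eq. (\ref{eq52}): apply $\phi^{[n-k-1]}$ to Eq. (\ref{eq52}) with $l=n-k$, multiply by $(-1)^{k-1}$, and sum over $k$; the ribbon terms cancel in pairs, leaving $R_{(n);a}^{[n-1]}=S_{n;a}^{[n-1]}$.

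For Eq. (\ref{wronski}) the substitute-and-collapse plan fails and cannot be repaired by bookkeeping. Substituting Eq. (\ref{PsiLrel}) into $\sum_{k}S_{k;a}^{[n-1]}\Psi_{n-k;a}$ and regrouping by combined degree $m=k+j$ produces inner sums $\sum_{j=1}^{m}(-1)^{j-1}j\,S_{m-j;a}^{[n-1]}\Lambda_{j;a}^{[n-m]}$, which carry the weight $j$ and have the order $S\cdot\Lambda$; Eq. (\ref{otherfundamental}) is unweighted and has the order $\Lambda\cdot S$, so it cannot resum them. Concretely, for $m=2$ the inner sum is $S_{1;a}S_{1;a}-2\Lambda_{2;a}^{[n-2]}$, and expanding in the basis of monomials in the $S_{k;a}$ shows it differs from every shift $\phi^{[s]}(\Psi_{2;a})$ in the coefficient of $S_{1;a}$ for generic $a$; it is not a shifted power sum at all. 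Worse, the manipulation is circular even in the commutative specialization, where it merely reproduces $\sum_k h_kp_{n-k}$. The missing idea is the paper's induction on $n$: rewrite the definition of $\Psi_{n;a}$ as $nS_{n;a}^{[n-1]}=\Psi_{n;a}+\sum_{k=1}^{n-1}(-1)^{k-1}(n-k)\Lambda_{k;a}^{[n-k]}S_{n-k;a}^{[n-k-1]}$, apply the induction hypothesis to each factor $(n-k)S_{n-k;a}^{[n-k-1]}$, and identify the coefficient of $\Psi_{n-\nu;a}$ as $S_{\nu;a}^{[n-1]}$ using the $\phi^{[n-\nu]}$-shift of Eq. (\ref{otherfundamental}) --- whose order $\Lambda\cdot S$ is exactly what makes this identification possible.

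Your third step, by contrast, is sound and genuinely different from the paper, which proves Eq. (\ref{newton}) by a second induction based on Eq. (\ref{PsiLrel}). Applying $\omega_a$ to the defining sum for $\Psi_{n;a}$ and comparing with the $\phi^{[1-n]}$-shift of Eq. (\ref{PsiLrel}) for the parameters $\hat{a}$ gives $\omega_a(\Psi_{n;a})=(-1)^{n-1}\Psi_{n;\hat{a}}^{[1-n]}$; applying $\omega_a$ to Eq. (\ref{wronski}) and then the automorphism $\phi^{[n-1]}$ yields Eq. (\ref{newton}) for $\hat{a}$, hence for all $a$ since $\hat{\hat{a}}=a$. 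Duality thus replaces an entire induction, which is an attractive economy; but it is conditional on Eq. (\ref{wronski}), so it cannot stand until your second step is repaired.
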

\begin{proof}
Eq. (\ref{otherfundamental}) follows directly from Eq. (\ref{eq52}). We prove Eq. (\ref{wronski}) by induction on $n$. The case $n=1$ is clear. Fix $n\geq 2$ and assume the statement holds for smaller values than $n$. By definition, we find
\begin{align*}
nS_{n;a}^{[n-1]}&=\Psi_{n;a}+\sum_{k=1}^{n-1}(-1)^{k-1}(n-k)\Lambda_{k;a}^{[n-k]}S_{n-k;a}^{[n-k-1]}\\
&=\Psi_{n;a}+\sum_{k=1}^{n-1}(-1)^{k-1}\Lambda_{k;a}^{[n-k]}\sum_{l=0}^{n-k-1}S_{l;a}^{[n-k-1]}\Psi_{n-k-l;a},
\end{align*}
where in the second equality, we use the induction hypothesis. We can identify the coefficient $c_\nu$ of $\Psi_{n-\nu;a}$ as
\begin{align*}
c_{\nu}&=(-1)^{k-1}\Lambda_{k;a}^{[n-k]}+(-1)^{k-2}\Lambda_{k-1;a}^{[n-k+1]}S_{1;a}^{[n-k]}\pm\ldots\pm \Lambda_1^{[n-1]}S_{k-1;a}^{[n-2]}.
\end{align*}
Now repeated application of Eq. (\ref{otherfundamental}) gives that $c_{\nu}=S_{k;a}^{[n-1]}$, using the expansion of $S_{k;a}^{[n-1]}$ in terms of a polynomial in the shifts of the $\Lambda_{l;a}$'s from Eq. (\ref{SintermsofL}).

The proof of Eq. (\ref{newton}) follows a very similar pattern, using Eq. (\ref{PsiLrel}) instead of the definition of $\Psi_{n;a}$.
\end{proof}

We obtain the following translation identities, analogue to \cite{Getal}*{Corollary 3.6}, using quasi-determinants.

\begin{proposition}\label{transitionpower}For any $n\geq 1$ we find that
\begin{align}
\label{PsiintermsofS}
\Psi_{n;a}&=\begin{vmatrix}
1S_{1;a}^{[n-1]}&S_{2;a}^{[n-1]}&\hdots&S_{n-2;a}^{[n-1]}&S_{n-1;a}^{[n-1]}&\boxed{nS_{n;a}^{[n-1]}}\\
1&S_{1;a}^{[n-2]}&\hdots&S_{n-3;a}^{[n-2]}&S_{n-2;a}^{[n-2]}&(n-1)S_{n-1;a}^{[n-2]}\\
0&\ddots&\ddots&\vdots&\vdots&\vdots\\
\vdots&\hdots&\ddots&S_{1;a}^{[2]}&S_{2;a}^{[2]}&3S_{3;a}^{[2]}\\
\vdots&\hdots&\hdots&1&S_{1;a}^{[1]}&2S_{2;a}^{[1]}\\
0&\hdots&\hdots&0&1&1S_{1;a}
\end{vmatrix},\\
\label{PsiintermsofL}
\Psi_{n;a}&=(-1)^{n-1}\begin{vmatrix}
\Lambda_{1;a}^{[n-1]}&2\Lambda_{2;a}^{[n-2]}&\hdots&(n-2)\Lambda_{n-2;a}^{[2]}&(n-1)\Lambda_{n-1;a}^{[1]}&\boxed{n\Lambda_{n;a}}\\
1&\Lambda_{1;a}^{[n-2]}&\hdots&\Lambda_{n-3;a}^{[2]}&\Lambda_{n-2;a}^{[1]}&\Lambda_{n-1;a}\\
0&\ddots&\ddots&\vdots&\vdots&\vdots\\
\vdots&\hdots&\ddots&\Lambda_{1;a}^{[2]}&\Lambda_{2;a}^{[1]}&\Lambda_{3;a}\\
\vdots&\hdots&\hdots&1&\Lambda_{1;a}^{[1]}&\Lambda_{2;a}\\
0&\hdots&\hdots&0&1&\Lambda_{1;a}
\end{vmatrix},\end{align}\begin{align}
\label{SintermsofPsi}
nS_{n;a}^{[n-1]}&=\begin{vmatrix}
\Psi_{1;a}^{[n-1]}&\Psi_{2;a}^{[n-2]}&\Psi_{3;a}^{[n-3]}&\hdots&\Psi_{n-1;a}^{[1]}&\boxed{\Psi_{n;a}}\\
-1&\Psi_{1;a}^{[n-2]}&\Psi_{2;a}^{[n-3]}&\hdots&\Psi_{n-2;a}^{[1]}&\Psi_{n-1;a}\\
0&-2&\Psi_{1;a}^{[n-3]}&\hdots&\Psi_{n-3;a}^{[1]}&\Psi_{n-2;a}\\
\vdots&\ddots&\ddots&\ddots&\vdots&\vdots\\
\vdots&\hdots&\hdots&2-n&\Psi_{1;a}^{[1]}&\Psi_{2;a}\\
0&\hdots&\hdots&0&1-n&\Psi_{1;a}
\end{vmatrix},\\\label{LintermsofPsi}
n\Lambda_{n;a}&=(-1)^{n-1}\begin{vmatrix}
\Psi_{1;a}^{[n-1]}&\Psi_{2;a}^{[n-2]}&\Psi_{3;a}^{[n-3]}&\hdots&\Psi_{n-1;a}^{[1]}&\boxed{\Psi_{n;a}}\\
n-1&\Psi_{1;a}^{[n-2]}&\Psi_{2;a}^{[n-3]}&\hdots&\Psi_{n-2;a}^{[1]}&\Psi_{n-1;a}\\
0&n-2&\Psi_{1;a}^{[n-3]}&\hdots&\Psi_{n-3;a}^{[1]}&\Psi_{n-2;a}\\
\vdots&\ddots&\ddots&\ddots&\vdots&\vdots\\
\vdots&\vdots&\ddots&2&\Psi_{1;a}^{[1]}&\Psi_{2;a}\\
0&\hdots&\hdots&0&1&\Psi_{1;a}
\end{vmatrix}.
\end{align}\end{proposition}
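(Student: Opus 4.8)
The plan is to derive all four identities from the noncommutative Cramer's rule of \cite{GR1}*{Theorem 1.8} (see also \cite{GGRW}*{Section 1.6.1}), applied to triangular systems of linear equations built from the Wronski and Newton relations Eq.~(\ref{wronski}) and Eq.~(\ref{newton}) of Lemma \ref{psieq}. This is exactly the mechanism used in the proof of Proposition \ref{transitionprop}: once a recurrence $x_m=\sum_{k<m}c_{mk}x_k+d_m$ is exhibited with the $x_m$ treated as indeterminates, the sought element $x_n$ is recovered as the displayed quasideterminant. The four formulas split into two pairs, Eq.~(\ref{PsiintermsofS})/Eq.~(\ref{SintermsofPsi}) governed by the Wronski relation and Eq.~(\ref{PsiintermsofL})/Eq.~(\ref{LintermsofPsi}) governed by the Newton relation; within each pair one direction is immediate and the other requires a shift alignment.

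Two of the formulas are direct. For Eq.~(\ref{PsiintermsofS}) I would rewrite Eq.~(\ref{wronski}) as the recurrence
\[\Psi_{m;a}=mS_{m;a}^{[m-1]}-\sum_{k=1}^{m-1}S_{k;a}^{[m-1]}\Psi_{m-k;a},\qquad m=1,\ldots,n,\]
and regard $\Psi_{1;a},\ldots,\Psi_{n;a}$ as the unknowns. Crucially the $\Psi$'s occur here without any shift, so the system is consistent and upper triangular in the $\Psi$'s, and Cramer's rule returns $\Psi_{n;a}$ as the quasideterminant in Eq.~(\ref{PsiintermsofS}); the scalar factors from $mS_{m;a}^{[m-1]}$ populate the last column and the subdiagonal is filled with $1$'s. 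For Eq.~(\ref{LintermsofPsi}) I would read Eq.~(\ref{newton}) directly as a recurrence for $\Lambda_{m;a}$ in terms of the (shifted) power sums and the lower $\Lambda_{k;a}$; again the $\Lambda$'s appear unshifted, so $\Lambda_{1;a},\ldots,\Lambda_{n;a}$ form consistent unknowns, and the integer coefficients together with the alternating signs of Eq.~(\ref{newton}) produce the subdiagonal integers and the prefactor $(-1)^{n-1}$.

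The remaining two formulas, Eq.~(\ref{SintermsofPsi}) and Eq.~(\ref{PsiintermsofL}), require a preliminary shift alignment, which I expect to be the main obstacle. Read naively, solving Eq.~(\ref{wronski}) for the $S$'s forces the unknown $S_{k;a}$ to occur at the incompatible shifts $[m-1]$ for different equations $m$, and solving Eq.~(\ref{newton}) for the $\Psi$'s makes $\Psi_{m;a}$ occur at the varying shifts $[k]$. To cure this I would apply the shift automorphism $\phi^{[s]}$ of Lemma \ref{shiftauto} to the relevant relation, using the composition law $\phi^{[s]}\phi^{[t]}=\phi^{[s+t]}$ (hence $\phi^{[s]}(S_{k;a}^{[t]})=S_{k;a}^{[s+t]}$, and, by the definition of the power sums as polynomials in the $\Lambda_{k;a}^{[\cdot]}$ and $S_{k;a}^{[\cdot]}$, also $\phi^{[s]}(\Psi_{n;a})=\Psi_{n;a}^{[s]}$), which follows from the generating-series definition Eq.~(\ref{phidef}) together with Lemma \ref{spowerreclemma-a}. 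Taking $s=n-m$ turns Eq.~(\ref{wronski}) into
\[mS_{m;a}^{[n-1]}=\sum_{k=0}^{m-1}S_{k;a}^{[n-1]}\Psi_{m-k;a}^{[n-m]},\]
in which every $S$ now carries the common shift $[n-1]$, so that $u_k:=S_{k;a}^{[n-1]}$ are consistent unknowns and the power sums appear precisely at the shifts $[n-j]$ required by the columns of Eq.~(\ref{SintermsofPsi}); Cramer's rule then extracts $nS_{n;a}^{[n-1]}=nu_n$. The analogous substitution $s=n-m$ in Eq.~(\ref{newton}) collapses the shift on $\Psi_{m-k;a}$ to $[n-(m-k)]$, so that $z_p:=\Psi_{p;a}^{[n-p]}$ become consistent unknowns with $z_n=\Psi_{n;a}$, yielding Eq.~(\ref{PsiintermsofL}).

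The remaining work is bookkeeping: checking that the re-shifted relations really collapse to a single shift on the unknowns (the arithmetic $[k+s]=[n-(m-k)]$ for $s=n-m$), confirming the composition law $\phi^{[s]}\phi^{[t]}=\phi^{[s+t]}$, and matching the precise subdiagonal integers (the entries $-k$ in Eq.~(\ref{SintermsofPsi}) and $n-k$ in Eq.~(\ref{LintermsofPsi})) together with the global sign $(-1)^{n-1}$ against the quasideterminant delivered by Cramer's rule. I would pin these down against the cases $n=1,2$, where all four identities reduce to $\Psi_{1;a}=S_{1;a}=\Lambda_{1;a}$ and $\Psi_{2;a}=2S_{2;a}^{[1]}-S_{1;a}^{[1]}S_{1;a}=-2\Lambda_{2;a}+\Lambda_{1;a}^{[1]}\Lambda_{1;a}$, which already exhibits the compatibility of the two sides of each pair.
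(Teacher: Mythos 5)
Your overall strategy coincides with the paper's: the four identities are obtained by solving the triangular systems coming from Lemma \ref{psieq} with the noncommutative Cramer rule, exactly as in Proposition \ref{transitionprop}, with Eq.~(\ref{PsiintermsofS}) and Eq.~(\ref{LintermsofPsi}) immediate (the unknowns $\Psi_{k;a}$, resp.\ $\Lambda_{k;a}$, occur unshifted) and the other two requiring a re-alignment of shifts. You have also correctly identified that re-alignment as the crux. The gap is the tool you use to achieve it: the composition law $\phi^{[s]}\phi^{[t]}=\phi^{[s+t]}$, equivalently $\phi^{[s]}(S_{k;a}^{[t]})=S_{k;a}^{[s+t]}$, is \emph{false} for a general parameter sequence $a$. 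Indeed, by Lemma \ref{sumlemma} one has $S_{1;a}^{[s]}=S_{1;a}$, $S_{2;a}^{[1]}=S_{2;a}+(a_1-a_0)S_{1;a}$ and $S_{2;a}^{[2]}=S_{2;a}+(a_1-a_{-1})S_{1;a}$, so that
\begin{equation*}
\phi^{[1]}\bigl(S_{2;a}^{[1]}\bigr)=S_{2;a}+2(a_1-a_0)S_{1;a}\neq S_{2;a}+(a_1-a_{-1})S_{1;a}=S_{2;a}^{[2]}
\end{equation*}
unless $a_{-1}-2a_0+a_1=0$. The shift $[s]$ records a re-expansion of the \emph{same} series $\sigma^a(t)$ in the basis attached to $\tau^{-s}a$, and these operations anchored at $a$ do not form a one-parameter group (they do precisely in the equidistant case, e.g.\ $a_i=i-1$). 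Consequently your re-shifted Wronski relation $mS_{m;a}^{[n-1]}=\sum_{k=0}^{m-1}S_{k;a}^{[n-1]}\Psi_{m-k;a}^{[n-m]}$ is false if $\Psi_{p;a}^{[u]}$ is read as $\phi^{[u]}(\Psi_{p;a})$: for $m=2$, $n=3$ it would force $2S_{2;a}^{[2]}=\phi^{[1]}(\Psi_{2;a})+S_{1;a}\phi^{[1]}(\Psi_{1;a})=2S_{2;a}+4(a_1-a_0)S_{1;a}$, contradicting the value of $S_{2;a}^{[2]}$ above. Note that your sanity checks at $n=1,2$ cannot detect this, since they only involve $S_{1;a}^{[s]}=S_{1;a}$; the first failure occurs at $n=3$.

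The repair is to prove \emph{shifted versions} of the identities of Lemma \ref{psieq} directly, rather than transporting the unshifted ones by $\phi^{[s]}$. By Eq.~(\ref{phidef}), the families $\lbrace S_{k;a}^{[s]}\rbrace_k$ and $\lbrace\Lambda_{k;a}^{[s]}\rbrace_k$ are the coefficients of the same series $\sigma^{a}(t)$ and $\lambda^{\hat a}(-t)$ expanded in the bases attached to the translated sequence $\tau^{-s}a$; writing $\sigma^a(t)=\sum_i S_{i;a}^{[s]}/\spower{t}{\tau^{-s}a}^i$ and $\lambda^{\hat a}(-t)=1+\sum_j(-1)^j\Lambda_{j;a}^{[s]}/\spower{t}{\tau^{-s-j}a}^j$ and running the proof of Eq.~(\ref{lineareq}) with $\tau^{-s}a$ as anchor yields
\begin{equation*}
\sum_{i+j=n}(-1)^jS_{i;a}^{[n-1+s]}\Lambda_{j;a}^{[s]}=\delta_{n,0}\qquad\text{for all }s,
\end{equation*}
and hence, by the same inductions as in Lemma \ref{psieq}, the Wronski and Newton relations with \emph{all} bracket superscripts translated uniformly by $s$. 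In particular the symbols $\Psi_{p;a}^{[u]}$ in the Proposition must be understood as the formal translate $\sum_{k}(-1)^k(p-k)\Lambda_{k;a}^{[p-k+u]}S_{p-k;a}^{[p-k-1+u]}$ of the defining expression, which differs from $\phi^{[u]}(\Psi_{p;a})$ for non-equidistant $a$. Once these uniformly shifted relations are in place, your four Cramer-rule computations, including the bookkeeping of the subdiagonal integers and the sign $(-1)^{n-1}$, go through exactly as you describe.
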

\begin{proof}These identities are proved similarly to \cite{Getal} by solving systems of linear equations arising from Lemma \ref{psieq}.
\end{proof}

We note that all $\Lambda_{n;a}$, $S_{n;a}$ are hence polynomials in the $\Psi_{l;a}$ with coefficients in $\mQ[a]$. The corresponding identities for noncommutative symmetric functions are recovered if $a=0$.

\begin{example}
\begin{align*}
\Psi_{2;a}&=-2\Lambda_{2;a}+\Lambda_{1;a}^{[1]}\Lambda_{1;a}=2S_{2;a}^{[1]}-S_{1;a}^{[1]}S_{1;a},\\
\Psi_{3;a}&=3\Lambda_{3;a}-2\Lambda_{2;a}^{[1]}\Lambda_{1;a}-\Lambda_{1;a}^{[2]}\Lambda_{2;a}+\Lambda_{1;a}^{[2]}\Lambda_{1;a}^{[1]}\Lambda_{1;a}\\
&=3S_{3;a}^{[2]}-2S_{2;a}^{[2]}S_{1;a}-S_{1;a}^{[2]}S_{2;a}^{[1]}+S_{1;a}^{[2]}S_{1;a}^{[1]}S_{1;a}.
\end{align*}
\end{example}


\subsection{A Hopf Algebra Structure}\label{hopf}

In this section, we propose a Hopf algebra structure on $\Sym^a$ by making the generators $\Psi_{n;a}$ primitive elements. Actions on other generators $S_{n;a}$, and $\Lambda_{n;a}$ can be computed using the translation formulas from Lemma \ref{psieq}. In the commutative case,  a Hopf algebra structure on $\Lambda^a$ was introduced in \cite{Mol2}*{Section 4}.

\begin{proposition}
We define a coproduct on $\Sym^a$ by requiring $\Psi_{n;a}$ to be primitive elements, i.e.
\begin{align}
\Delta(\Psi_{n;a})&=\Psi_{n;a}\otimes 1+1\otimes \Psi_{n;a}.
\end{align}
This way, $\Sym^a$ becomes a Hopf algebra, with antipode defined by $S(\Psi_{n;a})=-\Psi_{n;a}$.
\end{proposition}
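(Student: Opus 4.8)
The plan is to establish that $\Sym^a$ is a Hopf algebra by exploiting the fact that it is a free associative algebra on the $a$-shifted power sums $\Psi_{n;a}$, as proved in the preceding proposition. First I would define the coproduct $\Delta$ on generators by $\Delta(\Psi_{n;a})=\Psi_{n;a}\otimes 1+1\otimes\Psi_{n;a}$ and extend it to all of $\Sym^a$ as an algebra homomorphism $\Sym^a\to\Sym^a\otimes\Sym^a$. Because the $\Psi_{n;a}$ freely generate $\Sym^a$, the universal property of the free algebra guarantees that this extension exists and is unique, so $\Delta$ is well-defined with no relations to check. Likewise, I would define the counit $\varepsilon\colon\Sym^a\to\Bbbk$ as the algebra homomorphism sending each $\Psi_{n;a}\mapsto 0$ (equivalently, projection onto the degree-zero component), and the antipode $S$ as the anti-algebra homomorphism determined by $S(\Psi_{n;a})=-\Psi_{n;a}$.

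Next I would verify the bialgebra axioms. Coassociativity $(\Delta\otimes\ide)\Delta=(\ide\otimes\Delta)\Delta$ and the counit axiom $(\varepsilon\otimes\ide)\Delta=\ide=(\ide\otimes\varepsilon)\Delta$ both hold on the generators $\Psi_{n;a}$ by a direct check — primitivity makes each side compute to the standard cocommutative coproduct on a primitive element. Since all the structure maps are algebra homomorphisms (and the target algebras $\Sym^a\otimes\Sym^a\otimes\Sym^a$, etc., are associative), verifying these identities on the free generators suffices to conclude they hold identically. This reduces the entire bialgebra verification to the trivial computation on primitives, which is the whole point of choosing the $\Psi_{n;a}$ as generators rather than the $S_{n;a}$ or $\Lambda_{n;a}$.

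Finally I would confirm the antipode axiom $m(S\otimes\ide)\Delta=\eta\varepsilon=m(\ide\otimes S)\Delta$. On a primitive generator $\Psi_{n;a}$ we compute $m(S\otimes\ide)\Delta(\Psi_{n;a})=S(\Psi_{n;a})+\Psi_{n;a}=-\Psi_{n;a}+\Psi_{n;a}=0=\eta\varepsilon(\Psi_{n;a})$ for $n\geq 1$, and similarly on the other side. Here one must take care that $S$ is an \emph{anti}-homomorphism, so the convolution identity has to be checked as an identity of maps rather than merely on generators; however, since $\Sym^a$ is a free algebra and $S$ is the unique anti-endomorphism with the prescribed values, the general theory of connected graded bialgebras applies. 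Indeed, $\Sym^a$ is connected and $\mN$-graded with each graded piece finite-dimensional, so once the bialgebra structure is in place the antipode exists automatically and is necessarily the anti-homomorphism described. The main obstacle, and the only genuinely nontrivial point, is ensuring consistency between the abstract statement ``$S(\Psi_{n;a})=-\Psi_{n;a}$'' and the fact that $S$ must be an anti-algebra map; I would resolve this by invoking the connectedness of the grading to guarantee existence of the antipode and then noting that any two anti-endomorphisms agreeing on free generators coincide.
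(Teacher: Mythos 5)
The paper states this proposition without proof, so there is no argument of the authors' to compare against; your proposal supplies the standard one, and it is essentially correct: freeness of $\Sym^a$ on the $\Psi_{n;a}$ (the preceding proposition) makes $\Delta$ and $\varepsilon$ well-defined algebra maps, and coassociativity together with the counit axiom follow by checking on generators, since all maps involved there are algebra homomorphisms.

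One caveat in your last step deserves attention. When you invoke connectedness of the grading to produce the antipode, you must use the grading in which the $\Psi_{n;a}$ are homogeneous of degree $n$ (i.e., the grading transported through the identification of $\Sym^a$ with the free algebra on the $\Psi_{n;a}$), \emph{not} the paper's original grading with $\deg S_{k;a}=k$. With respect to the original grading, $\Delta$ is not a graded map: the paper's own example $\Delta(S_{3;a})=S_{3;a}\otimes 1+S_{2;a}\otimes S_{1;a}+S_{1;a}\otimes S_{2;a}+1\otimes S_{3;a}+\tfrac{4}{3}(a_0-a_1)S_{1;a}\otimes S_{1;a}$ contains a term of total degree $2$. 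Since $\Psi_{n;a}$ is itself inhomogeneous in the $S$-grading (e.g. $\Psi_{2;a}=2S_{2;a}-S_{1;a}S_{1;a}+2(a_1-a_0)S_{1;a}$), the two gradings genuinely differ, so your phrase ``$\Sym^a$ is connected and $\mN$-graded'' is only correct after this regrading; with it, $\Sym^a$ is a connected graded bialgebra and the antipode exists and is unique, as you argue. Alternatively, you can bypass graded-bialgebra theory entirely: if $S$ is any anti-algebra endomorphism of a bialgebra $H$, then the set of $h\in H$ satisfying $m(S\otimes\ide)\Delta(h)=\varepsilon(h)1=m(\ide\otimes S)\Delta(h)$ is a subalgebra of $H$ (a short Sweedler-notation computation: $\sum S(k_{(1)})S(h_{(1)})h_{(2)}k_{(2)}=\varepsilon(h)\sum S(k_{(1)})k_{(2)}=\varepsilon(h)\varepsilon(k)1$, using that $S$ reverses products). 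Hence verifying the antipode identity on the free generators $\Psi_{n;a}$, where it reads $-\Psi_{n;a}+\Psi_{n;a}=0=\varepsilon(\Psi_{n;a})1$, already suffices; this closes the one step you flagged as delicate without any appeal to gradings.
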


It is in general not straightforward to compute the coproducts of $S_{n;a}$, and $\Lambda_{n;a}$. We restrict ourselves to include the following examples in the noncommutative setup:
\begin{align*}
\Delta(S_{2;a})&=S_{2;a}\otimes 1+\frac{1}{2}S_{1;a}^{[-1]}\otimes S_{1;a}+\frac{1}{2}S_{1;a}\otimes S_{1;a}^{[-1]}+1\otimes S_{2;a}\\
&=S_{2;a}\otimes 1+S_{1;a}\otimes S_{1;a}+1\otimes S_{2;a},\\
\Delta(S_{3;a})&=S_{3;a}\otimes 1+S_{2;a}\otimes S_{1;a}+S_{1;a}\otimes S_{2;a}+1\otimes S_{3;a}+\frac{4}{3}(a_0-a_1)S_{1;a}\otimes S_{1;a}.
\end{align*}

\subsection{Multiparameter Quasi-Schur Functions}\label{quasiSchur}

In this section, we define multiparameter quasi-Schur functions and present a version of Giambelli's formula. We note, as in \cite{Getal}*{Section 3.3}, that the quasi-Schur functions are \emph{not} elements of $\Sym^a$ but are contained in the skew-field freely generated by $S_{1;a},S_{2;a},S_{3;a},\ldots$. The treatment here is similar to the unshifted case in \cite{Getal}. 

\begin{definition}
Let $\lambda=(\lambda_1\leq \lambda_2\leq \ldots \leq \lambda_n)$ be a partition. We define the \emph{multiparameter quasi-Schur function} $\breve{S}_{\lambda;a}$ as the quasideterminant
\begin{align}
\breve{S}_{\lambda;a}=(-1)^{n-1}\begin{vmatrix}
S_{\lambda_1;a}^{[n-1]}&S_{\lambda_2+1;a}^{[n-1]}&\ldots&\boxed{S_{\lambda_n+n-1;a}^{[n-1]}}\\
S_{\lambda_1-1;a}^{[n-2]}&S_{\lambda_2;a}^{[n-2]}&\ldots&S_{\lambda_n+{n}-2;a}^{[n-2]}\\
\vdots&\vdots&\ddots&\vdots\\
S_{\lambda_1-{n}+1;a}&S_{\lambda_2-n+2;a}&\ldots&S_{\lambda_n;a}
\end{vmatrix}.
\end{align}
Here, we use the convention that $S_{-k;a}=0$ for $k>0$.
\end{definition}

This definition is similar to the formula from \cite{OO}*{Theorem 13.1}, for $a_i=i-1$,  and \cite{Mol2} for more general parameters $a$, in the commutative case. In the case $a_i=i-1$, if all $S_{k;a}=S_k^*$ commute, we obtain a ratio of shifted Schur functions $s_\lambda^*/s_{\lambda'}^*$, where $\lambda'=(\lambda_1-1,\ldots, \lambda_{n-1}-1)$.

\begin{example}
\begin{gather*}
\breve{S}_{k;a}=S_{k;a},\qquad  \breve{S}_{(1^k);a}=\Lambda_{k;a},\qquad
\breve{S}_{(1^2,3);a}=\begin{vmatrix}
S_{1;a}^{[2]}&S_{2;a}^{[2]}&S_{5;a}^{[2]}\\
1&S_{1;a}^{[1]}&S_{4;a}^{[1]}\\
0&1&S_{3;a}
\end{vmatrix}=R_{(1^2,3);a},\\
\breve{S}_{(2,1,2);a}=\begin{vmatrix}
S_{2;a}^{[2]}&S_{2;a}^{[2]}&S_{4;a}^{[2]}\\
S_{1;a}^{[1]}&S_{1;a}^{[1]}&S_{3;a}^{[1]}\\
1&1&S_{2;a}
\end{vmatrix}\neq R_{(2,1,2);a}.
\end{gather*}
\end{example}

A formula similar to Proposition \ref{ribboninL} appears naturally for $a$-shifted quasi-Schur functions, without the corrective shift. This is a noncommutative analogue for \cite{OO}*{Eq. (13.10)}, and an $a$-shifted version of \cite{Getal}*{Proposition 3.18}. For this, denote by $\lambda^{\sim}$ the \emph{conjugate} partition to $\lambda$, i.e. the partition whose diagram is obtained by interchanging the rows and columns of $\lambda$.

\begin{proposition}For any partition $\lambda=(\lambda_1\leq \lambda_2\leq \ldots \leq \lambda_n)$ we find
\begin{align}
\breve{S}_{\lambda^{\sim};a}=(-1)^{n-1}\begin{vmatrix}
\Lambda_{\lambda_{n};a}&\Lambda_{\lambda_{n-1}+1;a}^{[-1]}&\ldots &\boxed{\Lambda_{\lambda_{1}+{n}-1;a}^{[1-n]}}\\
\Lambda_{\lambda_{n}-1;a}&\Lambda_{\lambda_{n-1};a}^{[-1]}&\ldots &\Lambda_{\lambda_{1}+{n}-2;a}^{[1-n]}\\
\vdots&\vdots&\ddots&\vdots\\
\Lambda_{\lambda_{n}-n+1;a}&\Lambda_{\lambda_{n-1}-n+2;a}^{[-1]}&\ldots &\Lambda_{\lambda_{1};a}^{[1-n]}
\end{vmatrix}.
\end{align}
\end{proposition}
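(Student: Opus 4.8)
The plan is to closely follow the proof of Proposition \ref{ribboninL}, replacing the sparse ribbon matrix by the dense quasi-Schur matrix and enlarging the ambient inverse pair. Write $\mu=\lambda^\sim=(\mu_1\le\cdots\le\mu_\ell)$ with $\ell=\lambda_n$, so that $\mu_\ell=n$. As before, the key tool is the fundamental identity Eq. (\ref{lineareq}): I would take the $(\ell+n)\times(\ell+n)$ matrices $A$ and $B$ with entries
\[
B_{ij}=S_{j-i;a}^{[\ell-i]},\qquad A_{ij}=(-1)^{\ell+1-j}\Lambda_{j-i;a}^{[\ell+1-j]},
\]
(with $S_{-k;a}=\Lambda_{-k;a}=0$ for $k>0$), which are mutually inverse by the same computation as in Proposition \ref{ribboninL}, using that each shift $\phi^{[s]}$ is an algebra morphism. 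Both are upper unitriangular, so the relevant quasiminors are invertible and polynomial in their entries by \cite{GGRW}*{Proposition 2.6}. Note that the ambient size must be $\ell+n$ rather than $d_I+1$, since the boxed entry $S_{n+\ell-1;a}^{[\ell-1]}$ of the quasi-Schur matrix forces columns far to the right of the $B$-rows it uses.

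First I would realize the defining quasideterminant of $\breve{S}_{\lambda^\sim;a}$ as a quasiminor of $B$. The $(p,q)$-entry of the matrix in the definition of $\breve{S}_{\mu;a}$ is $S_{\mu_q+q-p;a}^{[\ell-p]}$, whose row-dependent shift $[\ell-p]$ coincides with the shift $[\ell-i]$ of $B$ precisely on the consecutive row block $I=\{1,\dots,\ell\}$; matching the subscripts then pins down the staircase column set $J=\{\mu_q+q:q=1,\dots,\ell\}\subseteq\{1,\dots,\ell+n\}$, with the box sitting at row $1$, column $\mu_\ell+\ell=\ell+n$. Thus
\[
\breve{S}_{\lambda^\sim;a}=(-1)^{\ell-1}\,|B_{I,J}|_{1,\ell+n}.
\]
Because the entire run of shifts $[\ell-1],\dots,[0]$ already appears inside $B$, no global corrective shift is introduced — this is the structural reason the right-hand side carries no analogue of the $[i_n-1]$ appearing in Proposition \ref{ribboninL}.

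Next I would apply Jacobi's theorem for the quasiminors of an inverse matrix (\cite{Getal}*{Theorem 2.14}) to rewrite $|B_{I,J}|_{1,\ell+n}$ as a complementary quasiminor of $A=B^{-1}$, with the roles of rows and columns interchanged. The resulting $A$-quasiminor has rows $J^c$ and the consecutive columns $I^c=\{\ell+1,\dots,\ell+n\}$; since $A_{ij}=(-1)^{\ell+1-j}\Lambda_{j-i;a}^{[\ell+1-j]}$ has the column-dependent shift $[\ell+1-j]$, evaluating on $j=\ell+1,\dots,\ell+n$ produces exactly the column shifts $[0],[-1],\dots,[1-n]$ of the target matrix. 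The signs $(-1)^{\ell+1-j}$ collected from the diagonal of $A$, together with the prefactor $(-1)^{\ell-1}$, recombine to $(-1)^{n-1}$.

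The main obstacle is the combinatorial bookkeeping of the complementation, which is where the conjugation $\mu^\sim=(\lambda^\sim)^\sim=\lambda$ enters. I would argue, exactly as in the passage of Proposition \ref{ribboninL} using the set $Q\cup\{d_I+1\}$ and its successive differences $m_i-m_{i-1}$, that the complement of the staircase $\beta$-set $J=\{\mu_q+q\}$ inside $\{1,\dots,\ell+n\}$ is, after the reflection $x\mapsto \ell+n+1-x$, the $\beta$-set $\{\lambda_p+p\}$ of the conjugate partition $\lambda$. Tracking the induced subscripts through $A_{ij}=(-1)^{\ell+1-j}\Lambda_{j-i;a}^{[\ell+1-j]}$ then shows the $(p,q')$-entry of the resulting $A$-quasiminor equals $\Lambda_{\lambda_{n-q'+1}+q'-p;a}^{[1-q']}$, the convention $\Lambda_{-k;a}=0$ supplying the zeros below the diagonal; this is precisely the claimed right-hand side. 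The only remaining care is to confirm the corner placement of the pivot survives Jacobi's theorem, so that the box lands in the top-right position.
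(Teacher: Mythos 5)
Your strategy is exactly the one the paper intends (the paper gives no proof here, only the pointer to Proposition \ref{ribboninL} and \cite{Getal}*{Proposition 3.18}): embed the defining quasideterminant of $\breve{S}_{\lambda^{\sim};a}$ as the quasiminor of $B_{ij}=S_{j-i;a}^{[\ell-i]}$ on rows $I=\{1,\ldots,\ell\}$ and columns $J=\{\mu_q+q\}$, pass to the inverse matrix via Jacobi's theorem, strip the inversion with the homological relations as in Proposition \ref{ribboninL}, and identify $J^c$ by $\beta$-set complementation; your identity $J^c=\{\ell+n+1-(\lambda_p+p)\}$ is also correct. The error is in the final bookkeeping step, and it is not cosmetic. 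The rows of the resulting $\Lambda$-quasiminor are the elements of $J^c$ taken in increasing order, $r_p=\ell+p-\lambda_{n+1-p}$ (the reflection $x\mapsto\ell+n+1-x$ reverses order, so the \emph{smallest} row label pairs with the \emph{largest} part $\lambda_n$), and its columns are $\ell+q$ for $q=1,\ldots,n$; hence the $(p,q)$-entry has $\Lambda$-subscript $(\ell+q)-r_p=\lambda_{n+1-p}+q-p$. The part index is attached to the row $p$, not to the column $q$ as you assert. Since quasideterminants are not invariant under transposition, your claimed entry $\Lambda_{\lambda_{n-q+1}+q-p;a}^{[1-q]}$ is genuinely different from what the computation produces, and your argument, carried out honestly, proves instead
\[
\breve{S}_{\lambda^{\sim};a}=(-1)^{n-1}\begin{vmatrix}
\Lambda_{\lambda_{n};a}&\Lambda_{\lambda_{n}+1;a}^{[-1]}&\ldots &\boxed{\Lambda_{\lambda_{n}+n-1;a}^{[1-n]}}\\
\Lambda_{\lambda_{n-1}-1;a}&\Lambda_{\lambda_{n-1};a}^{[-1]}&\ldots &\Lambda_{\lambda_{n-1}+n-2;a}^{[1-n]}\\
\vdots&\vdots&\ddots&\vdots\\
\Lambda_{\lambda_{1}-n+1;a}&\Lambda_{\lambda_{1}-n+2;a}^{[-1]}&\ldots &\Lambda_{\lambda_{1};a}^{[1-n]}
\end{vmatrix},
\]
in which the parts $\lambda_n,\ldots,\lambda_1$ run down the rows rather than across the columns.

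This mismatch is in fact evidence that the printed statement itself carries a transposition typo, which your blind write-up inherited rather than detected. The paper's own example following the proposition supports the row-indexed form: for $\lambda=(1,3)$, $\lambda^{\sim}=(1,1,2)$, it expresses $\breve{S}_{(1,1,2);a}$ through the matrix with rows $(\Lambda_{3;a},\Lambda_{4;a}^{[-1]})$ and $(1,\Lambda_{1;a}^{[-1]})$, so the top-right entry is $\Lambda_{\lambda_2+1;a}^{[-1]}=\Lambda_{4;a}^{[-1]}$, whereas the printed formula would put $\Lambda_{\lambda_1+1;a}^{[-1]}=\Lambda_{2;a}^{[-1]}$ there. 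A direct check rules the printed form out: for the self-conjugate partition $\lambda=(1,2)$ in the unshifted case $a=0$, the printed matrix has two equal columns $(\Lambda_2,\Lambda_1)^{T}$, so its quasideterminant vanishes, while $\breve{S}_{(1,2)}=S_1S_2-S_3=\Lambda_2\Lambda_1-\Lambda_3\neq 0$ (the row-indexed display above gives exactly $\Lambda_2\Lambda_1-\Lambda_3$). So your proof outline is sound and establishes the corrected identity, but the sentence ``this is precisely the claimed right-hand side'' is false and cannot be repaired. Two smaller slips of the same kind as in the paper's own Proposition \ref{ribboninL}: your $A$ is not literally $B^{-1}$ --- the genuine inverse has entries $(-1)^{k-j}\Lambda_{k-j;a}^{[\ell+1-k]}$, and the column sign you use differs from it by a row sign that must be tracked through the boxed row; and the final matrix need not have zeros below the diagonal (its $(n,1)$ entry is $\Lambda_{\lambda_1+1-n;a}$, which equals $1$ in the example), so no triangularity argument is available there.
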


\begin{example}The case $\lambda=(1,1,2)$ gives $\lambda^{\sim}=(1,3)$ and
\begin{align*}
\breve{S}_{(1,1,2);a}&=(-1)\begin{vmatrix}
S_{1;a}^{[2]}&S_{2;a}^{[2]}&\boxed{S_{4;a}^{[2]}}\\
1&S_{1;a}^{[1]}&S_{3;a}^{[1]}\\
0&1&S_{2;a}
\end{vmatrix}=
\begin{vmatrix}
\Lambda_{3;a}&\boxed{\Lambda_{4;a}^{[-1]}}\\
1&\Lambda_{1;a}^{[-1]}
\end{vmatrix}.
\end{align*}
\end{example}

We can extend the anti-algebra morphism $\omega_a\colon \Sym^a\to \Sym^{\hat{a}}$ to an anti-algebra isomorphism of the associated skew-fields. It follows from the above proposition that
\begin{align}
\omega_a\left( \breve{S}_{\lambda;a} \right)=\breve{S}_{\lambda^{\sim};\hat{a}}.
\end{align}

Recall the \emph{Frobenius form} of a partition, i.e. a decomposition of $\lambda$ into hook sub-diagrams, cf. the notation of \cite{Getal}*{Section 3.3}. In particular, we denote the partition $(n|m):=(1^n,m+1)$.
We obtain the following analogue of Giambelli's formula for $a$-shifted quasi-Schur functions,  analogously to \cite{Getal}*{Proposition 3.20}. 

\begin{proposition}[Giambelli Formula]\label{giambelli}Given $\lambda$ in Frobenius notation as $(\beta_1,\ldots, \beta_k|\alpha_1,\ldots, \alpha_k)$,
\begin{align}
\breve{S}_{\lambda;a}&=\begin{vmatrix}
\breve{S}_{(\beta_1|\alpha_1);a}&\breve{S}_{(\beta_1|\alpha_2);a}&\ldots& \breve{S}_{(\beta_1|\alpha_k);a}\\
\breve{S}_{(\beta_2|\alpha_1);a}&\breve{S}_{(\beta_2|\alpha_2);a}&\ldots& \breve{S}_{(\beta_2|\alpha_k);a}\\
\vdots&\vdots&\ldots&\vdots\\
\breve{S}_{(\beta_k|\alpha_1);a}&\breve{S}_{(\beta_k|\alpha_2);a}&\ldots& \boxed{\breve{S}_{(\beta_k|\alpha_k);a}}
\end{vmatrix}.
\end{align}
\end{proposition}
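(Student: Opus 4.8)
The plan is to reduce the $a$-shifted Giambelli formula to its unshifted counterpart, \cite{Getal}*{Proposition 3.20}, by mirroring the quasideterminant manipulation used there but tracking the shift superscripts $[\,\cdot\,]$ throughout. The key observation is that both the left-hand side $\breve{S}_{\lambda;a}$ and each hook entry $\breve{S}_{(\beta_i|\alpha_j);a}$ are defined via quasideterminants whose matrix entries are the shifted complete homogeneous functions $S_{k;a}^{[l]}$, and the shift decoration is determined purely by the row index of the defining matrix. Since the shift $\phi^{[s]}$ is an algebra automorphism of $\Sym^a$ (Lemma \ref{shiftauto}) and the quasideterminants in question are polynomial in their entries (by \cite{GGRW}*{Proposition 2.6}), the entire argument of \cite{Getal} can be transported provided the bookkeeping of shifts is consistent across the Frobenius decomposition.

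First I would recall the proof strategy of \cite{Getal}*{Proposition 3.20} in the unshifted case. There, one writes $\lambda$ in Frobenius notation and applies a sequence of quasideterminant identities --- the homological relations of \cite{GGRW}*{Theorem 1.4.2} together with Bazin-type or Sylvester-type expansions --- to express the $n\times n$ quasideterminant defining $\breve{S}_\lambda$ as a $k \times k$ quasideterminant whose entries are the hook quasi-Schur functions $\breve{S}_{(\beta_i|\alpha_j)}$. I would carry out the same sequence of reductions on the matrix defining $\breve{S}_{\lambda;a}$. The crucial point is that in this matrix the entry in row $r$ carries shift superscript $[n-r]$, which depends only on its row; consequently, when one extracts a minor corresponding to a hook $(\beta_i|\alpha_j)$, the induced shifts on that minor are precisely those appearing in the definition of $\breve{S}_{(\beta_i|\alpha_j);a}$, up to a uniform shift that can be absorbed using that $\phi^{[s]}$ is an algebra homomorphism. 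I would verify this compatibility of shifts explicitly for the hook case first, confirming that the defining quasideterminant of $\breve{S}_{(\beta|\alpha);a}$ with $(\beta|\alpha)=(1^\beta,\alpha+1)$ matches the minor extracted from the big matrix with its inherited shift decoration.

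The main obstacle I anticipate is exactly this shift-compatibility bookkeeping: in the unshifted case the entries $S_k$ are homogeneous scalars under the quasideterminant operations, but here each algebraic manipulation (multiplying quasideterminants, inverting them, expanding along rows or columns) must commute appropriately with the shift operators $\phi^{[s]}$. Because the shifts attached to entries in the Giambelli matrix on the right-hand side are all trivial (the hook entries $\breve{S}_{(\beta_i|\alpha_j);a}$ appear with no external shift), I expect the reduction to work out cleanly only because the row-dependent shifts in the big matrix are exactly the ones that, after the hook extraction, become internal to each hook's own defining quasideterminant. Establishing that no residual shift survives --- i.e. that the shifts assemble correctly into the hook definitions with the outer $k\times k$ quasideterminant carrying no shift --- is the heart of the argument and the step where the particular choice of shift convention $[n-r]$ in the definition of $\breve{S}_{\lambda;a}$ must be used.

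Once the shift-compatibility is established, the remainder is formal: the quasideterminant identities are purely algebraic and hold over any noncommutative ring, so the same chain of reductions that proves the unshifted Giambelli formula yields the $a$-shifted version verbatim, with the shift automorphism ensuring all intermediate expressions remain polynomial in the $S_{k;a}^{[l]}$ and hence well-defined. I would therefore present the proof as a careful adaptation of \cite{Getal}*{Proposition 3.20}, emphasizing the shift bookkeeping at each step and invoking Lemma \ref{shiftauto} and the polynomiality from \cite{GGRW}*{Proposition 2.6} as the two new ingredients beyond the unshifted argument.
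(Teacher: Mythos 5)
Your proposal takes essentially the same route as the paper: the paper also transports the proof of \cite{Getal}*{Proposition 3.20} to the shifted setting by introducing a column-label notation for quasideterminants in which the shift superscript depends only on the row index, so that the hook quasi-minors extracted from the big matrix (via \cite{Getal}*{Proposition 2.12}) inherit exactly the shifts appearing in the definition of $\breve{S}_{(\beta_i|\alpha_j);a}$, with no residual external shift --- precisely the shift-independence you identify as the heart of the argument. The one point to sharpen is that the quasideterminant identity doing the work there is specifically Bazin's theorem, which you name only as one of several possibilities.
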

\begin{proof}
This follows as in \cite{Getal}*{Proposition 3.20}, using a version of Bazin's theorem for quasideterminants. We write
\begin{align*}
\begin{vmatrix}
i_1&i_2&\ldots&\boxed{i_n}
\end{vmatrix}=
\begin{vmatrix}
S_{i_1;a}^{[n-1]}&S_{i_2;a}^{[n-1]}&\ldots&\boxed{S_{i_n;a}^{[n-1]}}\\
S_{i_1-1;a}^{[n-2]}&S_{i_2-1;a}^{[n-2]}&\ldots&S_{i_n-1;a}^{[n-2]}\\
\vdots&\vdots&\ldots&\vdots\\
S_{i_1-n+1;a}&S_{i_2-n+1;a}&\ldots&S_{i_n-n+1;a}
\end{vmatrix}.
\end{align*}
The statement is now proved as in \cite{Getal}*{Proposition 3.20}. We note that indeed
$$\begin{vmatrix}0&1&\ldots& k-2&k&k+1\ldots&n&\boxed{n+m+2}\end{vmatrix}
=\breve{S}_{(n-k|m)},$$
which follows using \cite{Getal}*{Proposition 2.12}.
\end{proof}

This formula generalizes \cite{OO}*{Eq. (13.17)}, which was remarked by G. Olshanski is independent of shifts. The same observation applies here.

\subsection{A Realization of \texorpdfstring{$\Sym^a$}{Sym-a} inside of \texorpdfstring{$\Sym$}{Sym}}

We can adapt the point of view from \cite{ORV2}*{Section \S 3} to the noncommutative setup and realize multiparameter ribbon Schur functions in $\Sym$ by requiring that
\begin{align*}
\sigma^{a}(t)&:=1+\sum_{k=1}^{\infty}{\frac{S_{k;a}}{\spower{t}{a}^k}}=1+\sum_{k=1}^{\infty}{\frac{S_k}{t^k}}=\sigma(1/t),\\
\lambda^{\hat{a}}(t)&:=1+\sum_{k=1}^{\infty}{\frac{\Lambda_{k; a}}{{\spower{t}{\hat{a}}^k}}}=1+\sum_{k=1}^{\infty}{\frac{\Lambda_k}{t^k}}=\lambda(1/t).
\end{align*}
These assumptions are not made in other sections of the paper, where we cannot relate the generating series of $a$-shifted symmetric functions to the unshifted ones.

In this setup, rather than defining a new ring $\Sym^a$, we can relate the unshifted symmetric functions to the $a$-shifted ones by more precise formulas.

\begin{lemma}\label{rewritelemma}For any series of parameters $a=(a_i)$,
\begin{align}
S_n&=\sum_{i=0}^{n} h_i(a_1,\ldots,a_{n-i})S_{n-i;a}, &
\Lambda_n&=\sum_{i=0}^{n} (-1)^i h_i(a_1,\ldots,a_{-n+i+2})\Lambda_{n-i;a}.\label{nonshiftintermsofshift}\\
S_{n;a}&=\sum_{i=0}^{n} (-1)^ie_i(a_1,\ldots,a_{n-1})S_{n-i}, &
\Lambda_{n;a}&=\sum_{i=0}^{n} e_i(a_1,\ldots,a_{-n+3})\Lambda_{n-i}.\label{shiftintermsofnonshift}
\end{align}
\end{lemma}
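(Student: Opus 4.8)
The plan is to work directly with the generating series identities that define the realization, namely $\sigma^a(t) = \sigma(1/t)$ and $\lambda^{\hat a}(t) = \lambda(1/t)$, and to extract the four coefficient identities by comparing the two different expansions of the same series. First I would treat the first identity in Eq.~(\ref{shiftintermsofnonshift}), expressing $S_{n;a}$ in terms of the $S_n$. Writing $\sigma(1/t) = 1 + \sum_{k\geq 1} S_k/t^k$ on one side and $1 + \sum_{k\geq 1} S_{k;a}/\spower{t}{a}^k$ on the other, the task reduces to re-expanding each $1/\spower{t}{a}^k = 1/\bigl((t-a_1)\cdots(t-a_k)\bigr)$ as a series in $1/t$, and conversely re-expanding $1/t^k$ in the basis $\{1/\spower{t}{a}^j\}_j$. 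The key elementary computation is the partial-fraction-style expansion
\begin{align*}
\frac{1}{\spower{t}{a}^{k}} = \sum_{j\geq 0} \frac{h_j(a_1,\ldots,a_{k+j-1})}{t^{k+j}} \cdot t^{\,?},
\end{align*}
which I would pin down precisely by induction on $k$ using the telescoping relation $1/\spower{t}{a}^{k} = (t-a_k)^{-1}/\spower{t}{a}^{k-1}$ together with the geometric expansion of $(t-a_k)^{-1}$.

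The cleaner route, which I expect to be the actual engine of the proof, is to observe that the two change-of-basis matrices between $\{1/t^k\}$ and $\{1/\spower{t}{a}^k\}$ are triangular with ones on the diagonal, and that their entries are precisely complete homogeneous ($h_i$) and elementary ($e_i$) symmetric polynomials evaluated at the shift parameters. Concretely, expanding $\spower{t}{a}^k = (t-a_1)\cdots(t-a_k) = \sum_i (-1)^i e_i(a_1,\ldots,a_k)\, t^{k-i}$ gives the $e_i$-coefficients, while inverting this expansion (or expanding $1/\spower{t}{a}^k$) produces the $h_i$-coefficients, by the standard fact that the generating series of the $e_i$ and of the $h_i$ are mutually inverse. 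Matching the coefficient of each basis element $1/\spower{t}{a}^{n}$ (respectively $1/t^n$) on both sides of the series identity then yields the four stated formulas directly; the shift in the index of the arguments (e.g. $a_1,\ldots,a_{n-1}$ versus $a_1,\ldots,a_{n-i}$, and the appearance of negatively-indexed parameters $a_{-n+i+2}$ in the $\Lambda_n$ formula) is bookkeeping that tracks exactly how many factors $(t-a_j)$ enter each term and, for the $\Lambda$ identities, the use of the dual parameters $\hat a$ together with $\hat a_i = -a_{-i+1}$.

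For the two $\Lambda$-identities I would apply the same argument to $\lambda^{\hat a}(t) = \lambda(1/t)$, being careful to substitute $\hat a$ and then translate back via $\hat a_i = -a_{-i+1}$; this is what introduces the signs $(-1)^i$ and the negatively-indexed arguments, since the dualization reverses and negates the parameter list. Alternatively, once the two $S$-identities are established, I could deduce the two $\Lambda$-identities formally by applying the duality anti-algebra isomorphism $\omega_a$ of Proposition \ref{involution}, which interchanges $S_{k;a}$ with $\Lambda_{k;\hat a}$ and $a$ with $\hat a$; this avoids repeating the computation but requires checking that $\omega_a$ is compatible with the realization inside $\Sym$.

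The main obstacle I anticipate is purely combinatorial rather than conceptual: verifying that the coefficients produced by the series inversion are genuinely the symmetric functions $h_i$ and $e_i$ with the precise argument ranges stated, rather than some shifted or truncated variants. In particular the asymmetry between the two lines of Eq.~(\ref{nonshiftintermsofshift}) and Eq.~(\ref{shiftintermsofnonshift}) --- one pair using $h_i$ and the other using $e_i$, and the $\Lambda$ formulas running the index downward into negative parameters --- must be reconciled with the single underlying fact that $\sum_i e_i\, u^i$ and $\sum_i (-1)^i h_i\, u^i$ are reciprocal series. I would verify the argument ranges and signs on the low-degree cases $n=1,2$ before asserting the general pattern, and then confirm them by the induction on $k$ sketched above.
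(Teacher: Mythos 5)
Your plan is essentially the paper's own proof: the paper simply expands each factor $1/(t-a_k)$ by geometric series, compares coefficients of $1/t^n$ (respectively of $1/\spower{t}{a}^n$) in the defining identities $\sigma^a(t)=\sigma(1/t)$ and $\lambda^{\hat a}(t)=\lambda(1/t)$, and notes that the inverted formulas follow similarly --- which is exactly your triangular change-of-basis argument, with the dual parameters $\hat a_i=-a_{-i+1}$ accounting for the signs and negative indices in the $\Lambda$-formulas just as you say. The one detail to fix in your sketch is the tentative expansion: the direct formula is $1/\spower{t}{a}^{k}=\sum_{j\geq 0}h_j(a_1,\ldots,a_k)/t^{k+j}$ with the argument list of \emph{fixed} length $k$, whereas the truncations of varying length (of the type $a_1,\ldots,a_{m+i-1}$) occur only in the inverse expansion $1/t^{m}=\sum_{i\geq 0}(-1)^i e_i(a_1,\ldots,a_{m+i-1})/\spower{t}{a}^{m+i}$, which is what produces the fixed list $e_i(a_1,\ldots,a_{n-1})$ in Eq.~(\ref{shiftintermsofnonshift}).
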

Here, $h_k(a_1,\ldots,a_l)$ denotes the $k$-th commutative complete homogeneous symmetric function evaluated at the sequence $(a_1,\ldots,a_l)$, and similarly $e_{k}(a_1,\ldots, a_n)$ for the elementary symmetric function. We use the convention that for $j>i$ in $h_j(a_1,\ldots,a_i)$ all terms involving more than $i$ variables are simply omitted and only terms that do not require more than $b$ distinct entries appear in the sum.
E.g $S_3(a_1,a_2)=a_1^3+a_2^3+a_1a_2^2+a_1^2a_2$.

\begin{proof}[Proof of Lemma \ref{rewritelemma}]
Use the geometric series to write
\begin{align*}
\frac{1}{t-a_k}=\frac{1}{t}\sum_{i\geq 0}\frac{a_k^i}{t^i},
\end{align*}
for any $k$. The results follows by comparing coefficients in Eq. (\ref{Eka}), respectively Eq. (\ref{Lka}). The inverted formulas follow similarly. 
\end{proof}
Note that in particular, that in the context of this subsection, 
\begin{align*}
S_{1;a}&=S_1,&
S_{2;a}&=S_2-a_1S_{1},&
S_{3;a}&=S_3-(a_1+a_2)S_{2}+a_1a_2S_{1}.
\end{align*}
Eqs. (\ref{nonshiftintermsofshift})--(\ref{shiftintermsofnonshift}) hold equally well in the commutative case, as the generating series have the same form.

This way, the multiparameter ribbon Schur functions form a nonhomogeneous $\mZ[a]$-bases for the algebra $\mZ[a]\otimes_{\mZ}\Sym_{\mZ}$. For this, we use define Schur functions as in Definition \ref{Schurdef}, and replace appearances of $S_{i;a}^{[s]}$ by $S_{k;\tau^{-s}a}$.

This setup particularly applies to Example \ref{expl3}, with parameters $a_i=i-1/2$. In the commutative setup, this was used in \cites{Mol,ORV2} to define Frobenius--Schur functions.

\section{Specialization}\label{sect3}

In this section, we define specializations of the noncommutative $a$-shifted symmetric functions $S_{k;a}$ and $\Lambda_{k;a}$ in terms of noncommutative rational functions in a list of indeterminants $(x_1,\ldots, x_n)$. Following \cite{Getal}*{Section~7}, this approach uses quasideterminants in order to obtain analogues of the expressions of the shifted commutative symmetric functions in terms of quotients of determinants in \cite{OO}*{Eq. (0.3)}. For this, we fix noncommuting indeterminants $x_1,\ldots, x_n$. We work in the free skew field generated by the $x_i$, cf. \cite{Getal}.

\subsection{Shifted Symmetric Specialization}\label{sect3.1}

We have to restrict generality for the constructions of this section, yielding deformations of noncommutative functions, but not for arbitrary sequences of shift-parameters.

\begin{assumption}\label{equiass}
In this section, assume that $a$ is \emph{equidistant}, i.e. there exists a constant $c\in \Bbbk$ such that $a_{n+k}-a_k=c\cdot n$ for all $n,k$ in $\mZ$. Examples \ref{expl1}--\ref{expl3} all satisfy this assumption.  
\end{assumption}

\begin{lemma}\label{equishift}
Let $a$ be equidistant, with $a_{n+k}-a_k=c\cdot n$ for all $n,k$ in $\mZ$. Then
\begin{align}
\abinom{k}{\nu}_k^{a}=c^\nu\binom{k}{\nu}\opower{(k+\nu-1)}{\nu}.
\end{align}
In fact, $\Sym^a$ is isomorphic to $\Sym^*$ as a graded algebra if $c\neq 0$, and $\Sym^a\cong \Sym$ if $c=0$.
\end{lemma}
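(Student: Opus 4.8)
The plan is to prove the two claims of Lemma \ref{equishift} in turn. For the first identity, I would specialize the general formula \eqref{abinom} for $\abinom{k}{\nu}_k^a$ under the equidistant hypothesis $a_{i+j}-a_j=c\cdot i$. Writing out
\[
\abinom{k}{\nu}_k^a=\sum_{1\le s_1<\ldots<s_\nu\le k}\prod_{i=1}^\nu(a_{k+(\nu-i)+s_i}-a_{s_i}),
\]
each factor becomes $a_{k+(\nu-i)+s_i}-a_{s_i}=c\cdot(k+\nu-i)$ by equidistance, which is independent of $s_i$. Hence the product over $i$ equals $c^\nu\prod_{i=1}^\nu(k+\nu-i)=c^\nu\opower{(k+\nu-1)}{\nu}$, again independent of the chosen subset $\{s_1<\ldots<s_\nu\}$. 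The number of such subsets of $\{1,\ldots,k\}$ of size $\nu$ is $\binom{k}{\nu}$, so summing gives $c^\nu\binom{k}{\nu}\opower{(k+\nu-1)}{\nu}$, establishing the first formula. This is a direct computation; the key observation making it clean is that under equidistance every factor collapses to a summand-independent value, decoupling the product from the indexing set.

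For the isomorphism statement, the plan is to exhibit a graded algebra isomorphism explicitly. When $c=0$ every $a_i$ differs from $a_1$ by an integer multiple of $0$, so $a$ is a constant sequence; I would first reduce to the case $a=0$ by noting that the defining generating series \eqref{Eka}–\eqref{powersumequation} depend on the $a_i$ only through the products $\spower{t}{a}^k=(t-a_1)\cdots(t-a_k)$, and a uniform translation $t\mapsto t+a_1$ matches these to the unshifted series, giving $\Sym^a\cong\Sym$. When $c\ne 0$, the idea is to rescale: the equidistant sequence $a$ with constant $c$ is related to the sequence $a^*_i=i-1$ (constant $1$, defining $\Sym^*$) by an affine change of the indexing parameter. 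Concretely, I would construct the map on generators by comparing the generating series $\sigma^a(t)$ with $\sigma^*(s)$ under a substitution of the form $t=c\,s+\text{const}$, so that $\spower{t}{a}^k$ becomes $c^k\opower{s}{k}$ up to the additive constant; this forces a rescaling $S_{k;a}\mapsto c^{-k}S_k^*$ (or a similar scaling respecting the grading) and produces a graded algebra map.

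The key steps, in order, are: (i) verify the first displayed formula by the collapse computation above; (ii) handle $c=0$ via the translation reducing $a$ to the unshifted case; (iii) for $c\ne 0$, identify the affine substitution $t\mapsto cs+\mathrm{const}$ relating $\spower{t}{a}^k$ to $c^k\opower{s}{k}$ and read off the induced scaling of generators; (iv) check that the resulting assignment respects the defining relation \eqref{powersumequation} — so that it descends to a well-defined algebra homomorphism on the free algebra — and that it is a graded bijection with inverse given by the reverse substitution. I expect the main obstacle to be step (iv): since $\Sym^a$ is defined as a free algebra on the $S_{k;a}$, any assignment on generators automatically extends to an algebra map, but I must confirm that the image generators $\Lambda_{k;a}$ (which are \emph{determined} by $S_{k;a}$ through \eqref{powersumequation}) are sent correctly, i.e. that the generating-series identity is preserved under the substitution and rescaling. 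Because both sides are governed by the same relation $\lambda^{\hat a}(-t)\sigma^a(t)=1$, and the substitution is a bijection of formal-series variables, this should follow once the substitution is set up precisely, but keeping track of the dual parameters $\hat a$ and the constant shifts under the affine change is where the bookkeeping is most delicate.
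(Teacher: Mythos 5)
Your proposal is correct; note that the paper states Lemma \ref{equishift} with no proof at all, and your argument supplies the natural one. The first part is exactly right: under equidistance each factor $a_{k+(\nu-i)+s_i}-a_{s_i}$ in (\ref{abinom}) collapses to $c(k+\nu-i)$, independently of the index set, so the sum contributes $\binom{k}{\nu}$ identical terms $c^\nu\opower{(k+\nu-1)}{\nu}$. For the second part your plan also goes through, and the worry you flag in step (iv) resolves exactly as you predict: writing $a_i=a_1+c(i-1)$ one gets $\hat a_i=-a_1+ci$, and under the substitution $t=cs+a_1$ (for $c\neq 0$) both $\spower{t}{a}^k=c^k\opower{s}{k}$ and $\spower{-t}{\hat a}^k=(-1)^kc^k(s+1)\cdots(s+k)=c^k\opower{(-s-1)}{k}$. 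Hence the map $\varphi(S_{k;a})=c^kS^*_k$ turns $\sigma^a(t)$ into $\sigma^*(s)$, and since (\ref{powersumequation}) then says $\varphi(\lambda^{\hat a}(-t))$ is the two-sided inverse of $\sigma^*(s)$, uniqueness of inverses of formal series together with (\ref{serieseq}) forces $\varphi(\Lambda_{k;a})=c^k\Lambda^*_k$, so elementary generators go to (rescaled) elementary generators. The case $c=0$ is the same with $t=s+a_1$.

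Two minor points. Your rescaling is written in the inverted direction: since $\spower{t}{a}^k=c^k\opower{s}{k}$, the coefficient of $1/\opower{s}{k}$ in $\sigma^a$ is $c^{-k}S_{k;a}$, so the isomorphism $\Sym^a\to\Sym^*$ must send $S_{k;a}\mapsto c^{k}S^*_k$, not $c^{-k}S^*_k$; your hedge ``or a similar scaling respecting the grading'' covers this, and it is immaterial for the bare statement. Second, it is worth observing that $\Sym^a$, $\Sym^*$, and $\Sym$ are all \emph{free} graded algebras with exactly one generator in each positive degree, so the abstract graded-isomorphism claim holds trivially for \emph{any} parameter sequence $a$; the substance of your argument --- and presumably the intended content of the lemma --- is that for equidistant $a$ the isomorphism can be chosen to intertwine the distinguished generating series, i.e. to respect both families $S_{k;a}$ and $\Lambda_{k;a}$ simultaneously, which is what your substitution establishes.
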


This deformation point of view comes from \cite{OO}*{Remark 1.7} in the commutative case.

\begin{definition}
The elementary $a$-shifted symmetric functions $\Lambda_{k;a}(x_1,\ldots,x_n)$ are defined by setting
\begin{equation}\label{specializedseries}
\sum_{k\geq 0}\frac{\Lambda_{k;a}(x_1,\ldots,x_n)}{\spower{-t}{\hat{a}}^k}=\begin{vmatrix}
1&1&\ldots&1&1/\spower{t}{\tau^{-n} a}^n\\
\spower{x_1}{\tau^{1-n}a}^1&\spower{x_2}{\tau^{2-n}a}^1&\ldots&\spower{x_n}{a}^1&1/\spower{t}{\tau^{1-n} a}^{n-1}\\
\spower{x_1}{\tau^{1-n}a}^{2}&\spower{x_2}{\tau^{2-n}a}^2&\ldots&\spower{x_n}{a}^2&1/\spower{t}{\tau^{2-n} a}^{n-2}\\
\vdots&\vdots&\ldots&\vdots&\vdots\\
\spower{x_1}{\tau^{1-n}a}^n&\spower{x_2}{\tau^{2-n}a}^n&\ldots&\spower{x_n}{a}^n&\boxed{1}
\end{vmatrix}.
\end{equation}
That is, the right hand side is used to specify the generating series $\lambda^{\hat{a}}(-t)$.
\end{definition}

Observe that with this specialization, we can express $\Lambda_{k;a}(x_1,\ldots, x_n)$ as a quotient of quasideterminants.

\begin{theorem}[Specialization]\label{specialize} Let $a$ be equidistant. 
For $1\leq k\leq n$ we find that $\Lambda_{k;a}(x_1,\ldots,x_n)$ equals
\begin{equation*}
(-1)^{k-1}
\begin{vmatrix}
1&\ldots&1\\
\vdots&\ldots&\vdots\\
\spower{x_1}{\tau^{1-n}a}^{n-k-1}&\ldots&\spower{x_n}{a}^{n-k-1}\\
\spower{x_1}{\tau^{1-n}a}^{n-k+1}&\ldots&\spower{x_n}{a}^{n-k+1}\\
\vdots&\ldots&\vdots\\
\spower{x_1}{\tau^{1-n}a}^n&\ldots&\boxed{\spower{x_n}{a}^n}
\end{vmatrix}\cdot
{\begin{vmatrix}
1&\ldots&1\\
\vdots&\ldots&\vdots\\
\spower{x_1}{\tau^{1-n}a}^{n-k}&\ldots&\boxed{\spower{x_n}{ a}^{n-k}}\\
\vdots&\ldots&\vdots\\
\spower{x_1}{\tau^{1-n}a}^{n-1}&\ldots&\spower{x_n}{a}^{n-1}
\end{vmatrix}}^{-1}.
\end{equation*}
(The first quasi-minor is read in the way that the row with $(n-k)$-th $a$-shifted powers is deleted, and we start with the $0$-th $a$-shifted powers, unless $n-k=0$.)
Further, $\Lambda_{1;a}(x_1,\ldots, x_n)=1$ and $\Lambda_{k;a}(x_1,...,x_n)=0$ as soon as $k>n$.

Moreover, the $a$-shifted complete homogeneous symmetric function $S_{k;a}(x_1,\ldots,x_n)$ equals
\begin{equation*}
\begin{vmatrix}
1&\ldots&1\\
\vdots&\ldots&\vdots\\
\spower{x_1}{\tau^{1-n}a}^{n-3}&\ldots&\spower{x_n}{a}^{n-3}\\
\spower{x_1}{\tau^{1-n}a}^{n-2}&\ldots&\spower{x_n}{a}^{n-2}\\
\spower{x_1}{\tau^{1-n}a}^{n+k-1}&\ldots&\boxed{\spower{x_n}{a}^{n+k-1}}
\end{vmatrix}\cdot
{\begin{vmatrix}
1&\ldots&1\\
\vdots&\ldots&\vdots\\
\spower{x_1}{\tau^{1-n}a}^{n-1}&\ldots&\boxed{\spower{x_n}{a}^{n-1}}
\end{vmatrix}}^{-1},
\end{equation*}
and $S_{0;a}(x_1,\ldots, x_n)=1$, as well as $S_{k;a}(x_1,...,x_n)=0$ as soon as $k>n$
\end{theorem}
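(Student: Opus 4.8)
The plan is to establish the two quotient-of-quasideterminant formulas by starting from the defining generating-series identity in Eq.~(\ref{specializedseries}) and extracting the appropriate coefficients. First I would analyze the right-hand side of Eq.~(\ref{specializedseries}): it is a single quasideterminant whose boxed entry sits in the top-right corner, and whose last column carries the $t$-dependence through the terms $1/\spower{t}{\tau^{j-n}a}^{n-j}$. The key structural observation is that, by the column-expansion rules for quasideterminants (cf.~\cite{GGRW}*{Section 1.5} and \cite{Getal}*{Proposition 2.6}), this quasideterminant expands as a $\Bbbk$-linear combination of the basis elements $1/\spower{t}{\tau^{k-n}a}^{n-k}$ with coefficients that are themselves ratios of quasideterminants in the $\spower{x_j}{\cdot}^{l}$ entries. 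I would then match these coefficients against the left-hand expansion $\sum_{k\geq 0}\Lambda_{k;a}(x_1,\ldots,x_n)/\spower{-t}{\hat{a}}^k$, using the identity $\spower{-t}{\hat{a}}^k=\spower{t}{\tau^{-k}a}^k$ (which follows from $\hat{a}_i=-a_{-i+1}$ together with the definition of shifted powers) to align the two bases in $1/t$.

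The core computation is a Cramer's-rule argument. I would treat Eq.~(\ref{specializedseries}) as encoding a system of noncommutative linear relations: fixing the last column as the ``right-hand side'' vector and the matrix of $a$-shifted powers $\spower{x_j}{\tau^{j-n}a}^{l}$ (for $l=0,\ldots,n$ and $j=1,\ldots,n$) as the coefficient matrix, the noncommutative Cramer's rule of \cite{GR1}*{Theorem 1.8} expresses each coefficient $\Lambda_{k;a}(x_1,\ldots,x_n)$ as a quotient of two quasideterminants, where the numerator is obtained by deleting the row of $(n-k)$-th powers and the denominator keeps rows $0$ through $n-1$. This is precisely the structure asserted. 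The formula for $S_{k;a}(x_1,\ldots,x_n)$ is then obtained either by the same column-extraction applied to the series $\sigma^a(t)$ directly, or — more economically — by invoking the defining relation $\lambda^{\hat{a}}(-t)\sigma^a(t)=1$ from Eq.~(\ref{powersumequation}) together with the base-change formulas of Proposition \ref{transitionprop}; I would present the direct extraction to keep the two formulas parallel, noting that the numerator for $S_{k;a}$ replaces the bottom row by the row of $(n+k-1)$-th powers.

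The boundary cases are handled by direct inspection. The claim $\Lambda_{1;a}(x_1,\ldots,x_n)=1$ follows because the relevant numerator and denominator quasideterminants coincide (deleting the $(n-1)$-th row from the full matrix and comparing), and $S_{0;a}(x_1,\ldots,x_n)=1$ is immediate from the normalization $S_{0;a}=1$ and the constant term of the series. The vanishing $\Lambda_{k;a}(x_1,\ldots,x_n)=0$ and $S_{k;a}(x_1,\ldots,x_n)=0$ for $k>n$ is a dimension count: for $k>n$ the numerator quasideterminant would require a row of powers beyond the $n$ available columns to be nonzero, so by the alternating (rank-deficiency) property of these Vandermonde-type quasideterminants the numerator vanishes while the denominator does not. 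Here the equidistance hypothesis (Assumption \ref{equiass}) enters to guarantee, via Lemma \ref{equishift}, that the $\spower{x_j}{\tau^{j-n}a}^{l}$ genuinely behave like a noncommutative generalized Vandermonde system so that the relevant denominators are invertible in the skew field.

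The step I expect to be the main obstacle is the careful bookkeeping of the shift indices $\tau^{j-n}a$ across the columns and the verification that the expansion of the single defining quasideterminant in Eq.~(\ref{specializedseries}) against the basis $\{1/\spower{t}{\tau^{k-n}a}^{n-k}\}_{k}$ produces exactly the stated numerator/denominator pattern, with the correct signs $(-1)^{k-1}$ and the correct ``deleted row'' convention. In the commutative case this is a routine Laplace expansion, but in the noncommutative setting the ordering of factors and the use of the homological relations for quasideterminants (\cite{GGRW}*{Theorem 1.4.2}) must be tracked precisely, since the quotient must be read as a \emph{left} or \emph{right} quotient consistently; a sign or ordering error here would propagate into the final formulas. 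I would manage this by first verifying the pattern explicitly for small $n$ (say $n=2,3$) to fix conventions before presenting the general Cramer's-rule argument.
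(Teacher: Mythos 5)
Your treatment of the $\Lambda_{k;a}(x_1,\ldots,x_n)$ formula is essentially the paper's argument: Eq.~(\ref{specializedseries}) \emph{defines} the specialized series $\lambda^{\hat a}(-t)$, and the quotient of quasideterminants falls out by expanding the defining quasideterminant along its last column via \cite{Getal}*{Proposition 2.12} and reading off the coefficient of each $1/\spower{t}{\tau^{-i}a}^{i}$; your Cramer's-rule phrasing is an acceptable variant of this. (One small point: your justification of $\Lambda_{1;a}(x_1,\ldots,x_n)=1$ by ``numerator equals denominator'' does not hold --- for $k=1$ the two quasi-minors are genuinely different, as the paper's own example $\Lambda_1^*(x_1,x_2)=(x_2(x_2-1)-(x_1+1)x_1)(x_2-x_1-1)^{-1}$ shows; that clause of the statement is best read as the normalization $\Lambda_{0;a}=1$.)

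The genuine gap is in the $S_{k;a}$ half, which is where all the difficulty of the theorem lies. There is no defining quasideterminant identity for a specialized series $\sigma^{a}(t)$, so your primary route --- ``the same column-extraction applied to the series $\sigma^a(t)$ directly'' --- has nothing to extract from. The specialization of $S_{k;a}$ is \emph{forced} by the algebra structure: one must prove that substituting the stated Vandermonde-type quotients into the Jacobi--Trudi relation (\ref{LintermsofS}) (equivalently into Eq.~(\ref{lineareq})) reproduces the $\Lambda_{k;a}(x_1,\ldots,x_n)$ already defined by Eq.~(\ref{specializedseries}). Your fallback (invoke $\lambda^{\hat a}(-t)\sigma^a(t)=1$ together with Proposition \ref{transitionprop}) is the correct idea, but you supply no mechanism for carrying out the verification, and that verification is the theorem. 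Concretely, relation (\ref{LintermsofS}) involves the \emph{algebraic} shifts $S_{k;a}^{[l]}$ defined through generating series, whereas the operation available on the rational functions is the \emph{variable} shift $\psi$ (replacing each $x_i$ by $x_i+c$); these do not coincide --- the paper shows $\psi S_{k;a}(x)=\phi^{[1]}S_{k;a}(x)+nc\,S_{k-1;a}(x)$, Eq.~(\ref{shiftsrelation}). Bridging them requires (i) the lemma $\psi S_{k;a}(x)=S_{k;a}(x)+c(n+k-1)S_{k-1;a}(x)$ of Eq.~(\ref{varshiftrel}), (ii) the identity (\ref{qdetphipsi}) equating the Jacobi--Trudi quasideterminant built from the $S_{k;a}^{[l]}$ with the one built from the $\psi^l(S_{k;a})$, proved by an involved falling-power/binomial computation and row operations, and (iii) the Krob--Leclerc version of Bazin's theorem for quasideterminants (Theorem \ref{bazin}), applied to a $2n\times n$ matrix of shifted powers, which is what actually shows the stated quotients satisfy the Jacobi--Trudi relation. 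None of these ingredients appear in your outline; the obstacle you flag (sign and left/right-quotient bookkeeping in the homological relations) is minor by comparison. Relatedly, you misplace the role of the equidistance hypothesis: it is not needed to make denominators invertible in the free skew field, but precisely to make the $\psi$--$\phi$ comparison and the row operations on the Vandermonde-type quasideterminants work (cf.\ Lemma \ref{equishift}), which is why the theorem is stated under Assumption \ref{equiass}.
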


First, in order to derive the formula for $\Lambda_{k;a}(x_1,\ldots,x_n)$ we use the second expansion formula from \cite{Getal}*{Proposition 2.12} with $l=n$ and apply it to Eq. (\ref{specializedseries}). Then the parameter $1/\spower{t}{\tau^{-i} a}^{i}$ has coefficient given by  $|A^{{n-i+1},n+1}|_{n+1,n}|A^{n+1,n+1}|^{-1}_{n-i+1,n}$, where $i=1,\ldots, n$. The first minor $|A^{n-i+1,n+1}|_{n+1,n}$ deletes the last column and the row containing the $(n-i)$-th $a$-shifted powers and is the quasideterminant evaluated at the left bottom corner. Similarly, we recognize the second minor quasideterminant as stated.

The proof of the formula for $S_{k;a}$ uses a similar strategy to \cite{Getal}*{Proposition 7.5}. First, we introduce some simplifying notation. We want to verify that the $S_{k;a}(x_1,\ldots, x_n)$ defined by the formulas in the statement of this theorem satisfy Eq. (\ref{LintermsofS}). For this, we define
$$\dpower{s}{m_1,\ldots, \boxed{m_t},\ldots, m_n}=\begin{vmatrix}
\spower{x_1}{\tau^{1-s}a}^{m_1}&\spower{x_2}{\tau^{2-s}a}^{m_1}&\hdots& \spower{x_n}{\tau^{n-s}a}^{m_1}\\
\spower{x_1}{\tau^{1-s}a}^{m_2}&\spower{x_2}{\tau^{2-s}a}^{m_2}&\hdots& \spower{x_n}{\tau^{n-s}a}^{m_2}\\
\vdots&\vdots&\hdots&\vdots\\
\spower{x_1}{\tau^{1-s}a}^{m_t}&\spower{x_2}{\tau^{2-s}a}^{m_t}&\hdots&\boxed{\spower{x_n}{\tau^{n-s}a}^{m_t}}
\\
\vdots&\vdots&\hdots&\vdots\\
\spower{x_1}{\tau^{1-s}a}^{m_n}&\spower{x_2}{\tau^{2-s}a}^{m_n}&\hdots& \spower{x_n}{\tau^{n-s}a}^{m_n}\\
\end{vmatrix}.$$
Then we can write
\begin{align}
\Lambda_{k;a}(x_1,\ldots,x_n)&=\dpower{n}{0,\ldots n-k-1,n-k+1, \ldots, \boxed{n}}\cdot\dpower{n}{0,\ldots \boxed{n-k},\ldots n-1}^{-1},\label{appreviatedL}\\
S_{k;a}(x_1,\ldots,x_n)&=\dpower{n}{0,\ldots n-2, \boxed{n+k-1}}\cdot\dpower{n}{0,\ldots n-2, \boxed{n-1}}^{-1}.\label{appreviatedS}
\end{align}
Note that by elementary properties of quasideterminants, found in \cite{GGRW}*{Section 1.3}, for $i=1,\ldots, n-1$, multiplying the $i$-th column by $x_i-a_{i-s}$ on the right or left does not change the value of the quasideterminant. However, multiplying the $n$-th and last column by $x_n-a_{n-s}$ gives
\begin{align*}\dpower{s+1}{m_1+1,\ldots, \boxed{m_t+1},\ldots, m_n+1}
&=\dpower{s}{m_1,\ldots, \boxed{m_t},\ldots, m_n}(x_n-a_{n-s})\\
&=(x_n-a_{n-s})\dpower{s}{m_1,\ldots, \boxed{m_t},\ldots, m_n},
\end{align*}
where in the last equality we use that in the $n$-th column only shifts of $x_n$ appear and therefore left multiplication by 
$x_n-a_{n-s}$ equals right multiplication by this element.
Hence, we find the alternative expressions 
\begin{align}\label{altrel1}
\begin{split}\Lambda_{k;a}(x_1,\ldots,x_n)=&\dpower{n+s}{s,\ldots, n+s-k-1,n+s-k+1, \ldots, \boxed{n+s}}\\ &\cdot\dpower{n+s}{s,\ldots, \boxed{n+s-k},\ldots, n+s-1}^{-1},\end{split}
\\ \label{altrel2}
\begin{split}S_{k;a}(x_1,\ldots,x_n)=&\dpower{n+s}{s,\ldots, n+s-2, \boxed{n+s+k-1}}\\ &\cdot\dpower{n+s}{s,\ldots, n+s-2, \boxed{n+s-1}}^{-1},\end{split}
\end{align}
for any integer $s\geq 0$. 

\begin{definition}\label{variableshift}
The \emph{variable shifts} $\psi^{[s]} f(x_1,\ldots, x_n)$ are given by 
\begin{gather}
\psi^{[s]}(S_{k;a}(x_1,\ldots,x_n))=\dpower{n+s}{0,\ldots n-2, \boxed{n+k-1}}\cdot\dpower{n+s}{0,\ldots n-2, \boxed{n-1}}^{-1},\\
\begin{split}\psi^{[s]}(\Lambda_{k;a}(x_1,\ldots,x_n))=&\dpower{n+s}{0,\ldots n+k-1,n-k+1, \ldots, \boxed{n}}\\&\cdot\dpower{n+s}{0,\ldots \boxed{n-k},\ldots, n-1}^{-1}.\end{split}
\end{gather}
\end{definition}

As $a$ is equidistant, we see that 
\begin{equation}
\psi^{[s]}(S_{k;a}(x_1,\ldots, x_n))=S_{k;a}(x_1+sc,\ldots, x_n+sc).
\end{equation}
In particular, $\psi^{[s]}=\psi^s$, for $\psi^{[1]}=\psi$ in this case. 

Note that the shift $\psi$ does not coincide with the shift $\phi$ obtained using generating series in Section \ref{shiftsection}. However, we can describe the relationship between the two operations.

\begin{lemma}
If $a$ is equidistant, then the equation
\begin{align}\label{varshiftrel}\begin{split}
\psi S_{k;a}(x_1,\ldots, x_n)&=S_{k;a}(x_1+c,\ldots, x_n+c)\\
&=S_{k;a}(x_1,\ldots, x_n)+c(n+k-1)S_{k-1;a}(x_1,\ldots,x_n)\end{split}
\end{align}
holds for all $k\leq n$.
\end{lemma}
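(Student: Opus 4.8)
The plan is to verify the identity by expanding the $a$-shifted power in the distinguished (last) column of the quasideterminant formula for $\psi S_{k;a}(x_1,\ldots,x_n)$. Recall from Eq.~(\ref{appreviatedS}) and Definition~\ref{variableshift} that
\begin{align*}
\psi S_{k;a}(x_1,\ldots,x_n)=\dpower{n+1}{0,\ldots,n-2,\boxed{n+k-1}}\cdot\dpower{n+1}{0,\ldots,n-2,\boxed{n-1}}^{-1},
\end{align*}
where, by the equidistance assumption, $\psi=\psi^{[1]}$ shifts every variable $x_i\mapsto x_i+c$. The key observation is that the denominator quasideterminant $\dpower{n+1}{0,\ldots,n-2,\boxed{n-1}}$ is exactly the same Vandermonde-type denominator appearing in the unshifted expression $S_{k;a}(x_1,\ldots,x_n)$ via Eq.~(\ref{altrel2}) with $s=1$, so the whole problem reduces to relating the numerators $\dpower{n+1}{0,\ldots,n-2,\boxed{n+k-1}}$ in the two presentations.

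First I would use the alternative expressions Eqs.~(\ref{altrel1})--(\ref{altrel2}) with $s=1$ to rewrite $S_{k;a}(x_1,\ldots,x_n)$ itself with the parameter sequence shifted by one, so that both $\psi S_{k;a}$ and $S_{k;a}$ are presented using the same family of brackets $\dpower{n+1}{\cdots}$; this isolates the difference entirely in the top row of the numerator, where the $a$-shifted powers differ by precisely the translation $x_i\mapsto x_i+c$ versus $x_i$. Next I would expand the top-row entry $\spower{x_i+c}{\tau^{\cdots}a}^{n+k-1}$: since $a$ is equidistant the translated shifted power factors telescopically, and one checks that
\begin{align*}
\spower{x+c}{a}^{m}=(x+c-a_1)\cdots(x+c-a_m)=(x-a_0)(x-a_1)\cdots(x-a_{m-1})=\spower{x}{\tau^{-1}a}^{m},
\end{align*}
using $a_j-c=a_{j-1}$. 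Thus the translation by $c$ is equivalent to a reindexing of the parameter sequence, converting $\psi S_{k;a}$ into a bracket $\dpower{n}{\cdots}$ against the unshifted parameters, after which I would apply a column-linearity/multirow expansion of the numerator quasideterminant along its last column.

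The main step is then a quasideterminant identity: expanding $\dpower{n+1}{0,\ldots,n-2,\boxed{n+k-1}}$ evaluated at the shifted powers $\spower{x}{\tau^{-1}a}^{\bullet}$ and collecting terms produces exactly two contributions, the numerator of $S_{k;a}(x_1,\ldots,x_n)$ and a multiple of the numerator of $S_{k-1;a}(x_1,\ldots,x_n)$. I would identify the scalar coefficient of the latter by tracking the leading correction term coming from $x_i-a_{\bullet}\mapsto (x_i-a_{\bullet})+c$, which contributes a factor of $c$ times the sum of the exponents available in the $(n+k-1)$-th row, giving the claimed coefficient $c(n+k-1)$. Concretely, since the extra additive $c$ in each of the $n+k-1$ factors of the top-row power yields, to first order after dividing by the common denominator, a sum of $n+k-1$ equal contributions each equal to $c\cdot S_{k-1;a}(x_1,\ldots,x_n)$, the coefficient is $c(n+k-1)$.

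The hard part will be controlling the quasideterminant arithmetic rigorously: unlike ordinary determinants, quasideterminants are not multilinear in rows or columns in a naive sense, so the ``expand the translated power and split into two terms'' step must be justified through the elementary row/column operation rules of \cite{GGRW}*{Section 1.3} rather than by determinant expansion. I expect the cleanest route is to perform the splitting at the level of the numerator \emph{together with} its fixed denominator, treating the ratio $\dpower{n+1}{0,\ldots,n-2,\boxed{m}}\cdot\dpower{n+1}{0,\ldots,n-2,\boxed{n-1}}^{-1}$ as a single well-defined element $S_{(m-n+1);a}(x_1,\ldots,x_n)$ of the skew field, so that the additive splitting of the top-row power passes to an honest sum of two such ratios. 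The bookkeeping of indices under the reindexing $a\mapsto\tau^{-1}a$ and the equidistance identity $a_j-c=a_{j-1}$ is where the care is required, but no genuinely new tool beyond Lemma~\ref{equishift} and the elementary quasideterminant operations is needed.
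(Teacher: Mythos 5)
Your skeleton matches the paper's proof --- convert the translation $x_i\mapsto x_i+c$ into a reindexing of the parameter sequence (your identity $\spower{x+c}{a}^{m}=\spower{x}{\tau^{-1}a}^{m}$ is correct and gives the first equality), then compare the two quasideterminant ratios by matching denominators and splitting the numerator --- but both of the steps that carry the actual content have gaps. The first gap is the reduction: Eq.~(\ref{altrel2}) with $s=1$ writes $S_{k;a}$ as $\dpower{n+1}{1,\ldots,n-1,\boxed{n+k}}\cdot\dpower{n+1}{1,\ldots,n-1,\boxed{n}}^{-1}$, and this does \emph{not} have the same denominator as $\psi S_{k;a}=\dpower{n+1}{0,\ldots,n-2,\boxed{n+k-1}}\cdot\dpower{n+1}{0,\ldots,n-2,\boxed{n-1}}^{-1}$: the power lists differ in every row, so nothing is ``isolated in the top row.'' The equality of denominators you actually need, $\dpower{n+1}{0,\ldots,n-2,\boxed{n-1}}=\dpower{n}{0,\ldots,n-2,\boxed{n-1}}$, is a nontrivial claim; the paper proves it by observing, via Eq.~(\ref{elementaryrec}), that row $i$ of the left bracket equals row $i$ of the right bracket plus $c(i-1)$ times the row above it, and that such row operations leave the quasideterminant unchanged (\cite{GGRW}). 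The same row operations are needed to re-index the unmarked rows of the numerator bracket, a step your proposal never addresses.

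The second gap is the coefficient extraction, which you run as a first-order expansion in $c$; that is a heuristic, not a proof. Expanding $\prod_j(x+c-b_j)$ produces terms of all orders in $c$, and the first-order pieces $\prod_{l\neq j}(x-b_l)$ are not shifted powers at all when $j$ is an interior index --- so they are not numerators of $S_{k-1;a}$, and your claim of ``$n+k-1$ equal contributions, each $c\cdot S_{k-1;a}$'' fails term by term, with nothing controlling the higher-order terms. What makes the lemma true is an exact two-term identity, namely Eq.~(\ref{elementaryrec}): under equidistance, $\spower{x}{\tau^{-1}b}^{m}=\spower{x}{b}^{m}+cm\,\spower{x}{b}^{m-1}$ with no further corrections (the cross terms telescope precisely because consecutive parameters differ by $c$). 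The paper's proof first re-indexes all unmarked rows by quasideterminant-preserving row operations, then applies this identity with $m=n+k-1$ to the marked row alone, and finally uses the row expansion of the quasideterminant in the marked row to split it into exactly two brackets: the numerator of $S_{k;a}$ plus $c(n+k-1)$ times the numerator of $S_{k-1;a}$. If you replace your Taylor expansion by Eq.~(\ref{elementaryrec}) and supply the row-operation argument for the denominator and the unmarked rows, your outline becomes the paper's proof; as written, it does not go through.
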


\begin{proof}
First observe the elementary equality
\begin{equation}\label{elementaryrec}
\spower{x}{a}^{n}=\spower{x}{\tau a}^n+(a_{n+1}-a_1)\spower{x}{\tau a}^{n-1}.
\end{equation}
We use this formula to find that
\[
\dpower{n+1}{0,\ldots n-2, \boxed{n-1}}=\dpower{n}{0,\ldots n-2, \boxed{n-1}}.
\]
Indeed, this follows as the $i$-th row 
\[
\begin{pmatrix}\spower{x_1}{\tau^{-n}a}^{i-1}&\spower{x_2}{\tau^{1-n}a}^{i-1}& \hdots & \spower{x_n}{\tau^{-1}a}^{i-1}\end{pmatrix}
\]
of the matrix on the left hand side equals 
\[
\begin{pmatrix}\spower{x_1}{\tau^{1-n}a}^{i-1}+(a_{i-n}-a_{1-n})\spower{x_1}{\tau^{1-n}a}^{i-2}& \hdots & \spower{x_n}{a}^{i-1}+(a_{i-1}-a_0)\spower{x_n}{a}^{i-2}\end{pmatrix}.
\]
As $a$ is equidistant, this is just the sum of the $i$-th row plus $c(i-1)$ times the $(i-1)$-st row. Using \cite{GGRW}*{Proposition 2.9} adding a row (which is not the $n$-th row) to another does not change $|A|_{nn}$. This proves the equality of the denominators of 
$S_{k;a}(x_1,\ldots, x_n)$ and $\psi S_{k;a}(x_1,\ldots, x_n)$.
The statement for higher shifts now follows from Lemma \ref{equishift}. 

Hence Equation (\ref{varshiftrel}) reduces to comparing the nominator quasideterminants. In fact, we shall  show that
\begin{align*}
\dpower{n+1}{0,\ldots n-2, \boxed{n+k-1}}&=\dpower{n}{0,\ldots n-2, \boxed{n+k-1}}\\&\phantom{=}+c(n+k-1)\dpower{n}{0,\ldots n-2, \boxed{n+k-2}}.
\end{align*}
Consider the left hand side. Using the same observation as above we can successively subtract $a_i-a_1$ times the $(i-1)$-th row from the $i$-th row as long as $i\leq n-1$, starting from the top. The $i$-th entry of the last row of $\dpower{n+1}{0,\ldots n-2, \boxed{n+k-1}}$ can be rewritten, using Eq. (\ref{elementaryrec}), as  
\begin{align*}
\spower{x_i}{\tau^{i-n}a}^{n+k-1}+c(n+k-1)\spower{x_i}{\tau^{i-n}a}^{n+k-2},&& i=1,\ldots, n.
\end{align*}
Now we use a row expansion of the rewritten quasideterminant, see the first formula of \cite{GGRW}*{Proposition 2.12}. This proves the stated formula.
\end{proof}

As a corollary of the above Lemma, we derive the formula
\begin{equation}\label{shiftsrelation}
\psi S_{k;a}(x_1,\ldots, x_n)=S_{k;a}(x_1+s,\ldots, x_n+s)=\phi^{[1]} S_{k;a}(x_1,\ldots, x_n)+ncS_{k-1;a}(x_1,\ldots, x_n),
\end{equation}
clarifying the relationship between the two different shifts $\phi$ and $\psi$.

\begin{proof}[Proof of Theorem \ref{specialize}]
We prove the theorem in two steps. First, we show that for any $n\geq 1$, the identity of quasideterminants
\begin{align}
\begin{split}
&\begin{vmatrix}
S_{1;a}^{[k-1]}&S_{2;a}^{[k-1]}&\hdots&S_{k-2;a}^{[k-1]}&S_{k-1;a}^{[k-1]}&\boxed{S_{k;a}^{[k-1]}}\\
1&S_{1;a}^{[k-2]}&\hdots&S_{k-3;a}^{[k-2]}&S_{k-2;a}^{[k-2]}&S_{k-1;a}^{[k-2]}\\
0&\ddots&\ddots&\vdots&\vdots&\vdots\\
\vdots&\hdots&\ddots&S_{1;a}^{[2]}&S_{2;a}^{[2]}&S_{3;a}^{[2]}\\
\vdots&\hdots&\hdots&1&S_{1;a}^{[1]}&S_{2;a}^{[1]}\\
0&\hdots&\hdots&0&1&S_{1;a}
\end{vmatrix}\\&=
\begin{vmatrix}
\psi^{k-1}(S_{1;a})&\psi^{k-1}(S_{2;a})&\hdots&\psi^{k-1}(S_{k-2;a})&\psi^{k-1}(S_{k-1;a})&\boxed{\psi^{k-1}(S_{k;a})}\\
1&\psi^{k-2}(S_{1;a})&\hdots&\psi^{k-2}(S_{k-3;a})&\psi^{k-2}(S_{k-2;a})&\psi^{k-2}(S_{k-1;a})\\
0&\ddots&\ddots&\vdots&\vdots&\vdots\\
\vdots&\hdots&\ddots&\psi^{2}(S_{1;a})&\psi^{2}(S_{2;a})&\psi^2(S_{3;a})\\
\vdots&\hdots&\hdots&1&\psi(S_{1;a})&\psi(S_{2;a})\\
0&\hdots&\hdots&0&1&S_{1;a}
\end{vmatrix}
\end{split}\label{qdetphipsi}
\end{align}
holds. Here, we regard $\psi$ as a formal linear operator on $\Sym^a$, which sends $S_{l;a}$ to $S_{l;a}+c(n+l-1) S_{l-1;a}$ for any $l$. 

By repeated use of Eqs. (\ref{varshiftrel}) we find that 
\begin{equation}
\psi^l(S_{k;a})=\sum_{\nu=0}^lc^\nu\binom{l}{\nu}\opower{(n+k-1)}{\nu}S_{k-\nu;a}.
\end{equation}
Further, we require the recursion
\begin{equation}\label{inductiverec}
\opower{(x+n)}{k}=\sum_{\nu=0}^n\binom{n}{\nu}\opower{k}{\nu}\opower{x}{k-\nu},
\end{equation}
which follows by repeated application of Eq. (\ref{elementaryrec}) with $c=1$.
We use these equations to derive 
\begin{align*}
c^t\binom{n}{t}\opower{l}{t}\psi^{l-t}(S_{k-t;a})&=c^t\binom{n}{t}\opower{l}{t}\sum_{\nu=0}^{l-t}c^{\nu}\binom{l-t}{\nu}\opower{(n-t+k-1)}{\nu}S_{k-t-\nu;a}\\
&=c^t\binom{n}{t}\opower{l}{t}\sum_{\nu=0}^{l-t}c^{\nu}\binom{l-t}{\nu}\sum_{\omega=0}^{n-t}\binom{n-t}{\omega}\opower{\nu}{\omega}\opower{(k-1)}{(\nu-\omega)}S_{k-t-\nu;a}\\
&=c^t\binom{n}{t}\opower{l}{t}\sum_{\nu=t}^{l}c^{\nu-t}\binom{l-t}{\nu-t}\sum_{\omega=t}^{n}\binom{n-t}{\omega-t}\opower{(\nu-t)}{(\omega-t)}\opower{(k-1)}{(\nu-\omega)}S_{k-\nu;a}\\
&=\binom{n}{t}\sum_{\nu=t}^{l}\binom{l}{\nu}c^\nu\sum_{\omega=t}^{n}\binom{n-t}{\omega-t}\opower{\nu}{t}\opower{(\nu-t)}{(\omega-t)}\opower{(k-1)}{(\nu-\omega)}S_{k-\nu;a}\\
&=\binom{n}{t}\sum_{\nu=t}^{l}\binom{l}{\nu}c^\nu\sum_{\omega=t}^{n}\binom{n-t}{\omega-t}\opower{\nu}{\omega}\opower{(k-1)}{(\nu-\omega)}S_{k-\nu;a}\\
&=\sum_{\nu=t}^l\binom{l}{\nu}c^\nu\sum_{\omega=t}^n\binom{n}{\omega}\opower{\nu}{\omega}\opower{(k-1)}{(\nu-\omega)}\binom{\omega}{t}S_{k-\nu;a}\\
&=\sum_{\nu=0}^l\binom{l}{\nu}c^\nu\sum_{\omega=0}^n\binom{n}{\omega}\opower{\nu}{\omega}\opower{(k-1)}{(\nu-\omega)}\binom{\omega}{t}S_{k-\nu;a}.
\end{align*}
Hence we derive, using that 
$$1=-\sum_{t=1}^\omega (-1)^{t}\binom{\omega}{t},$$
the recursion
\begin{align*}
\psi^l(S_{k;a})&=\sum_{\nu=0}^{l}c^{\nu}\binom{l}{\nu}\opower{(n+k-1)}{\nu}S_{k-\nu;a}\\
&=\sum_{\nu=0}^{l}c^\nu\binom{l}{\nu}\sum_{\omega=0}^n\binom{n}{\omega}\opower{\nu}{\omega}\opower{(k-1)}{(\nu-\omega)}S_{k-\nu;a}\\
&=S_{k;a}^{[l]}+\sum_{\nu=1}^{l}\binom{l}{\nu}c^\nu\sum_{\omega=1}^n\binom{n}{\omega}\opower{\nu}{\omega}\opower{(k-1)}{(\nu-\omega)}S_{k-\nu;a}\\
&=S_{k;a}^{[l]}-\sum_{\nu=1}^{l}c^\nu\binom{l}{\nu}\sum_{\omega=1}^n\sum_{t=1}^{\omega}(-1)^t\binom{n}{\omega}\opower{\nu}{\omega}\opower{(k-1)}{(\nu-\omega)}\binom{\omega}{t}S_{k-\nu;a}\\
&=S_{k;a}^{[l]}-\sum_{t=1}^{l}(-1)^t\sum_{\nu=0}^{l}c^\nu\binom{l}{\nu}\sum_{\omega=0}^n\binom{n}{\omega}\opower{\nu}{\omega}\opower{(k-1)}{(\nu-\omega)}\binom{\omega}{t}S_{k-\nu;a}\\
&=S_{k;a}^{[l]}-\sum_{t=1}^{l}(-1)^tc^t\binom{n}{t}\opower{l}{t}\psi^{l-t}(S_{k-t;a}).
\end{align*}
Setting $c_{l,t}:=(-1)^tc^t\binom{n}{t}\opower{l}{t}$ we see that
\begin{equation}
\psi^l(S_{k;a})=S_{k;a}^{[l]}-\sum_{t=1}^l c_{l,t}\psi^{t}(S_{k-t;a})
\end{equation}
in which the coefficients $c_{l,t}$ are independent of $k$. This relationship implies Eq. (\ref{qdetphipsi}) by  successively subtracting linear combinations of the lower rows to change the top rows (cf. properties of quasideterminants in \cite{GGRW}*{Section 1.3}). 

Finally, we proceed exactly as in the proof of \cite{Getal}*{Proposition 7.5}, based on a noncom\-mu\-ta\-tive version of Bazin's Theorem.

Consider the $2n\times n$-matrix defined by having entries
\begin{align*}
a_{ij}=\spower{x_j}{\tau^{1+j-n-k}a}^{i-1}, &&i=1,\ldots, 2n, j=1,\ldots, n.
\end{align*}
Then we can form a $k\times k$-matrix $B$ of quasideterminants as in Theorem \ref{bazin}. It has entries
\begin{align*}
b_{ij}=\dpower{n+k-1}{i-1,\ldots, i+n-3, \boxed{n+j-1}\,}, && i,j=1,\ldots,k.
\end{align*}
This equals $|A_{\lbrace i,\ldots, i+n-2, n+j\rbrace}|_{n+j,n}$ in the notation of Theorem \ref{bazin}. Therefore, we obtain by the same theorem that 
\begin{align*}
|B|_{1k}=&\dpower{n+k-1}{k-1,\ldots, n-2,n,\ldots\boxed{n+k-1}\,} \\
&\cdot \dpower{n+k-1}{k-1,\ldots,\boxed{n-1},\ldots,n+k-2\,}^{-1}\cdot \dpower{n+k-1}{0,1,\ldots,\boxed{n-1}\,} \\
=&\Lambda_{k;a}(x_1,\ldots, x_n)\cdot \dpower{n+k-1}{0,1,\ldots,\boxed{n-1}\,} 
\end{align*}
where we use Equation (\ref{altrel1}) for the last equality.
We can multiply the $i$-th row of $B$ by the factor $$\dpower{n+k-1}{i-1,i,\ldots,\boxed{n+i-2}\,}^{-1}$$
on the right to obtain  a new matrix $C$. Computational rules of quasideterminants as in \cite{GGRW}*{Proposition 2.9}, together with the above computation, give that $$|C|_{1k}=|B|_{1k}\cdot \dpower{n+k-1}{0,1,\ldots,\boxed{n-1}\,}^{-1}=\Lambda_{k;a}(x_1,\ldots, x_n). $$
Note finally that the entries of $C$ are given by
\begin{align*}
c_{ij}=&\dpower{n+k-1}{i-1,\ldots, i+n-3,\boxed{n+j-1}\,}
\cdot\dpower{n+k-1}{i-1,i,\ldots,\boxed{n+i-2}\,}^{-1}\\
=&\dpower{n+k-i}{0,\ldots, n-2,\boxed{n+j-i}\,}
\cdot\dpower{n+k-i}{0,1,\ldots,\boxed{n-1}\,}^{-1}\\
=&\begin{cases}\psi^{k-i} S_{j-i+1;a}(x_1,\ldots,x_n)& \text{if }j+1\geq i,\\
0& \text{if }j+1<i,
\end{cases}
\end{align*}
for $i,j=1,\ldots,k$. Hence, omitting the list $(x_1,\ldots,x_n)$ for brevity in the $S_{i;a}$, and $\Lambda_{k;a}$, we find that
\begin{align}\label{altLintermsofS}
\Lambda_{k;a}=(-1)^{k-1}\begin{vmatrix}
\psi^{k-1}(S_{1;a})&\psi^{k-1}(S_{2;a})&\hdots&\psi^{k-1}(S_{k-2;a})&\psi^{k-1}(S_{k-1;a})&\boxed{\psi^{k-1}(S_{k;a})}\\
1&\psi^{k-2}(S_{1;a})&\hdots&\psi^{k-2}(S_{k-3;a})&\psi^{k-2}(S_{k-2;a})&\phi^{k-2}(S_{k-1;a})\\
0&\ddots&&\ddots&\vdots&\vdots\\
\vdots&\hdots&\ddots&\psi^2(S_{1;a})&\psi^{2}(S_{2;a})&\psi^{2}(S_{3;a})\\
\vdots&\hdots&\hdots&1&\psi(S_{1;a})&\psi(S_{2;a})\\
0&\hdots&\hdots&0&1&S_{1;a}
\end{vmatrix}.
\end{align}
This completes the proof, using Eq. (\ref{qdetphipsi}).
\end{proof}

\begin{example} Consider the case $a_i=i-1$. Expansion of the quasideterminant quotients give, for small values of $n$ and $k$, that
\begin{gather*}
\Lambda_1^*(x_1)=S_1^*(x_1)=x_1,\\
\Lambda_1^*(x_1,x_2)=S_1^*(x_1,x_2)=(x_2(x_2-1)-(x_1+1)x_1)(x_2-x_1-1)^{-1},\\
\Lambda_2^*(x_1,x_2)=(x_2(x_2-1)-x_1x_2)((x_1+1)^{-1}x_2-1)^{-1},\\
S_2^*(x_1,x_2)=(x_2(x_2-1)(x_2-2)-(x_1+1)x_1(x_1-1))(x_2-x_1-1)^{-1}.
\end{gather*}
We can observe a symmetry here: The functions $\Lambda_1^*(x_1,x_2)$, $\Lambda_2^*(x_1,x_2)$ and $S_2^*(x_1,x_2)$ are stable under the transformation exchanging
$$(x_1,x_2)\longleftrightarrow(x_2-1,x_1+1).$$
\end{example}

\begin{remark}
In the case when all variables $x_i$ commute (assuming invertibility of the relevant terms), one recovers the shifted symmetric functions from \cite{OO}*{Definition 1.2, Eq. (1.10), (1.11)}. Note that the same does not hold for the ribbon Schur functions, as the phenomenon of recovering the commutative Schur functions from ribbon Schur functions already fails in the unshifted case. However, the following Proposition holds. 
\end{remark}

\begin{proposition}\label{recover}
If the variables $x_1,\ldots, x_n$ commute, then 
\begin{align*}
S_{k}^*(x_1,\ldots, x_n)=h_k^*(x_1,\ldots, x_n), && \Lambda_{k}^*(x_1,\ldots, x_n)=e_k^*(x_1,\ldots, x_n),
\end{align*}
where $h_k^*$, $e_k^*$ denote the shifted symmetric functions of \cite{OO}.
\end{proposition}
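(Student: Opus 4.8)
The plan is to specialize the quasideterminant quotients of Theorem~\ref{specialize} to the commutative setting, where a quasideterminant collapses to an ordinary ratio of determinants, and then to match the resulting expressions with the Weyl-type determinant formula for shifted Schur functions recalled in the Introduction. For $a_i=i-1$ the sequence is equidistant with $c=1$, so Theorem~\ref{specialize} applies. The first ingredient is the commutative evaluation of a quasideterminant from \cite{GGRW}: when the entries commute, $|A|_{pq}=(-1)^{p+q}\det(A)/\det(A^{pq})$, where $A^{pq}$ is the submatrix obtained by deleting row $p$ and column $q$. The second ingredient is the identity $\spower{x_j}{\tau^{j-n}a}^m=\opower{(x_j+n-j)}{m}$, valid for $a_i=i-1$, which shows that every determinant arising below is literally of the form appearing in the Okounkov--Olshanski expression for $s_\lambda^*$.

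For $S_k^*(x_1,\ldots,x_n)$ the two quasideterminants of Theorem~\ref{specialize} are both boxed in the corner $(n,n)$, and in both cases the deleted minor $A^{nn}$ is the submatrix on the variables $x_1,\ldots,x_{n-1}$ carrying the rows of powers $0,1,\ldots,n-2$. Since this minor is the same for numerator and denominator, it cancels in the quotient, leaving $\det N/\det D$ with $N$ the matrix of rows with powers $0,1,\ldots,n-2,n+k-1$ and $D$ the matrix of rows with powers $0,1,\ldots,n-1$. These are precisely the numerator and denominator of $s_{(k)}^*=h_k^*$, so $S_k^*=h_k^*$.

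For $\Lambda_k^*(x_1,\ldots,x_n)$ the bookkeeping is more delicate, since the box sits at $(n,n)$ in the numerator quasideterminant but at $(n-k+1,n)$ in the denominator quasideterminant. I would verify that the two deleted minors still coincide --- both equal the submatrix on $x_1,\ldots,x_{n-1}$ carrying the rows of powers $0,\ldots,n-k-1,n-k+1,\ldots,n-1$ (the power $n-k$ being omitted) --- so that they again cancel in the quotient. The accumulated signs, namely the prefactor $(-1)^{k-1}$, the numerator sign $(-1)^{2n}=1$, and the denominator sign $(-1)^{(n-k+1)+n}=(-1)^{k-1}$, multiply to $+1$, leaving $\det N'/\det D$ with $N'$ the matrix of rows with powers $0,\ldots,n-k-1,n-k+1,\ldots,n$. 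This is exactly $s_{(1^k)}^*=e_k^*$, so $\Lambda_k^*=e_k^*$.

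The main obstacle is this sign-and-minor bookkeeping in the $\Lambda_k^*$ case: one must check that the off-corner box $(n-k+1,n)$ of the denominator produces the same minor as the corner box $(n,n)$ of the numerator, and that the three sign contributions cancel. Once the minors are matched and the entries are rewritten as falling powers $\opower{(x_j+n-j)}{m}$, the remainder is only the standard observation that $h_k^*=s_{(k)}^*$ and $e_k^*=s_{(1^k)}^*$ in the notation of \cite{OO}, i.e.\ the reduction of the shifted Jacobi--Trudi/Weyl formula to single-row and single-column partitions.
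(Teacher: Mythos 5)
Your proposal is correct and follows essentially the same route as the paper's own proof: both apply the commutative evaluation $|A|_{pq}=(-1)^{p+q}\det(A)/\det(A^{pq})$ to the quasideterminant quotients of Theorem~\ref{specialize}, observe that the deleted minors of numerator and denominator coincide (so they cancel, with the signs working out to $+1$), rewrite the shifted powers as falling powers $\opower{(x_j+n-j)}{m}$, and match the resulting ratio of determinants with the Okounkov--Olshanski formula. Your treatment is in fact slightly more explicit than the paper's on the sign-and-minor bookkeeping for $\Lambda_k^*$ (where the box sits off the corner in the denominator), which the paper records only as a one-line equality of minors before concluding.
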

\begin{proof}
If all entries of $A$ commute, then $|A|_{pq}=(-1)^{p+q}\det(A)/\det(A^{pq})$ according to \cite{Getal}*{Proposition 2.3}. Consider the formulas for $\Lambda_k^*(x_1,\ldots, x_n)$ and $S_k^*(x_1,\ldots, x_n)$ from Theorem \ref{specialize}. For $\Lambda_k^*(x_1,\ldots, x_n)$, write $\dpower{n}{0,\ldots n-k-1,n-k+1, \ldots, n}$ for the matrix used in the denominator and $\dpower{n}{0,\ldots n-k,\ldots n-1}$ for the matrix used in the denominator in Theorem \ref{specialize} for $\Lambda_k^*$. Then
\begin{align*}
\det(\dpower{n}{0,\ldots n-k-1,n-k+1, \ldots, n}^{n,n})=\det(\dpower{n}{0,\ldots n-k,\ldots n-1}^{n-k,n}).
\end{align*}
Hence 
\begin{align*}
\Lambda_k^*(x_1,\ldots, x_n)=
\frac{\det\begin{pmatrix}
1&\ldots&1\\
\vdots&\ldots&\vdots\\
\opower{(x_1+n-1)}{(n-k-1)}&\ldots&\opower{x_n}{(n-k-1)}\\
\opower{(x_1+n-1)}{(n-k+1)}&\ldots&\opower{x_n}{(n-k+1)}\\
\vdots&\ldots&\vdots\\
\opower{(x_1+n-1)}{n}&\ldots&\opower{x_n}{n}
\end{pmatrix}}
{\det \begin{pmatrix}
1&\ldots&1\\
\vdots&\ldots&\vdots\\
\opower{(x_1+n-1)}{(n-k)}&\ldots&\opower{x_n}{(n-k)}\\
\vdots&\ldots&\vdots\\
\opower{(x_1+n-1)}{(n-1)}&\ldots&\opower{x_n}{(n-1)}
\end{pmatrix}}.
\end{align*}
This equals $h_k^*(x_1,\ldots, x_n)$ as defined in \cite{OO}*{Eq. (0.3)} after inverting the order of the rows in nominator and denominator. The proof for $S_k^*$ is similar.
\end{proof}

\begin{proposition}[Shifted Symmetry]\label{shiftedsymmetries}
The functions $\Lambda_{k;a}(x_1,\ldots, x_n)$ and  $S_{k;a}(x_1,\ldots, x_n)$, and hence all multiparameter ribbon Schur functions, are symmetric under exchange of $$(x_i,x_{i+1})\longleftrightarrow (x_{i+1}-c,x_i+c).$$
\end{proposition}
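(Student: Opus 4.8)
The plan is to exhibit the substitution as a ring automorphism of the free skew field and to track its effect on the quasideterminants of Theorem~\ref{specialize}. Write $\theta_i$ for the $\Bbbk$-algebra endomorphism of the free skew field on $x_1,\ldots,x_n$ determined by $x_i\mapsto x_{i+1}-c$, $x_{i+1}\mapsto x_i+c$, and $x_l\mapsto x_l$ for $l\neq i,i+1$. Since this affine substitution is its own inverse on generators, $\theta_i$ is an involutive automorphism; in particular it fixes $\mZ[a]\subseteq\Bbbk$ and satisfies $\theta_i(u^{-1})=\theta_i(u)^{-1}$. Because a quasideterminant $|A|_{pq}$ is built from its entries using only sums, products, and inverses, any such automorphism commutes with quasideterminant formation, i.e.\ $\theta_i(|A|_{pq})=|\theta_i(A)|_{pq}$ where $\theta_i(A)$ has entries $\theta_i(a_{kl})$. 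Proving the claimed symmetry therefore reduces to showing that $\theta_i$ fixes $\Lambda_{k;a}(x_1,\ldots,x_n)$ and $S_{k;a}(x_1,\ldots,x_n)$.

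The key computation is that $\theta_i$ swaps the $x_i$- and $x_{i+1}$-columns of every matrix built from the shifted powers $\spower{x_j}{\tau^{j-s}a}^m$. Using equidistance in the form $a_{k+1}-a_k=c$, I would verify $\theta_i\bigl(\spower{x_i}{\tau^{i-s}a}^m\bigr)=\spower{x_{i+1}}{\tau^{(i+1)-s}a}^m$ and $\theta_i\bigl(\spower{x_{i+1}}{\tau^{(i+1)-s}a}^m\bigr)=\spower{x_i}{\tau^{i-s}a}^m$, while fixing the $x_l$-columns for $l\neq i,i+1$ and any column not involving the $x$'s. Indeed $x_{i+1}-c-a_{l+i-s}=x_{i+1}-a_{l+(i+1)-s}$ and $x_i+c-a_{l+(i+1)-s}=x_i-a_{l+i-s}$, which is exactly the column interchange. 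Thus applying $\theta_i$ to any of the quasideterminants $\dpower{s}{m_1,\ldots,m_n}$, or to the generating-series quasideterminant of Eq.~(\ref{specializedseries}), has the sole effect of transposing its columns $i$ and $i+1$.

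For the elementary functions I would apply this to Eq.~(\ref{specializedseries}). Its distinguished (boxed) entry sits in the extra last column, which involves only $t$ and is fixed by $\theta_i$, whereas the transposed columns $i,i+1$ both have index $\leq n$ and hence lie among the non-distinguished columns. By the elementary invariance of a quasideterminant under permutation of the rows and columns other than the distinguished one \cite{GGRW}*{Section 1.3}, the right-hand side of (\ref{specializedseries}) is unchanged by $\theta_i$. Since $\theta_i$ fixes each basis element $1/\spower{-t}{\hat a}^k$ on the left-hand side, comparing coefficients yields $\theta_i\bigl(\Lambda_{k;a}(x_1,\ldots,x_n)\bigr)=\Lambda_{k;a}(x_1,\ldots,x_n)$ for every $k$ and every $i=1,\ldots,n-1$. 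Routing through the generating series, rather than the explicit quotient of Theorem~\ref{specialize} whose boxed column is the $x_n$-column, is precisely what renders the otherwise delicate case $i=n-1$ harmless.

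For the complete homogeneous functions I would deduce invariance from that of the $\Lambda$'s. By Theorem~\ref{specialize} the specialized functions satisfy the Jacobi--Trudi identity (\ref{LintermsofS}), and hence the equivalent N\"agelsbach--Kostka identity (\ref{SintermsofL}); so $S_{n;a}(x_1,\ldots,x_n)$ is a quasideterminant whose entries are the shifts $\Lambda_{k;a}^{[l]}(x_1,\ldots,x_n)$. Each such entry is, by Lemma~\ref{sumlemma}, a $\mZ[a]$-linear combination of the $\Lambda_{j;a}(x_1,\ldots,x_n)$ and is therefore $\theta_i$-fixed; since $\theta_i$ fixes every entry of this matrix and commutes with quasideterminant formation, it fixes $S_{n;a}(x_1,\ldots,x_n)$. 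Finally, every $R_{I;a}(x_1,\ldots,x_n)$ is a polynomial with coefficients in $\mZ[a]$ in the $S_{k;a}^{[s]}(x_1,\ldots,x_n)$ by Eq.~(\ref{polynomialformula}) together with Lemma~\ref{sumlemma}, so it too is $\theta_i$-fixed. The main obstacle here is conceptual rather than computational: one must avoid applying the formulas of Theorem~\ref{specialize} directly (their boxed column is a variable column, so $i=n-1$ would move the distinguished column) and instead keep the distinguished entry away from the variable columns, which is exactly what (\ref{specializedseries}) for $\Lambda$ and (\ref{SintermsofL}) for $S$ accomplish.
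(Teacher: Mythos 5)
Your proposal is correct and takes essentially the same route as the paper's proof: invariance of $\Lambda_{k;a}(x_1,\ldots,x_n)$ is obtained from the fact that the substitution amounts to interchanging two non-distinguished columns of the quasideterminant in Eq.~(\ref{specializedseries}) (the paper cites \cite{GGRW}*{Proposition 1.2.4} for this invariance), and invariance of $S_{k;a}(x_1,\ldots,x_n)$ and of the ribbon Schur functions then follows because they are polynomials in (shifts of) the $\Lambda_{k;a}(x_1,\ldots,x_n)$. Your explicit check that equidistance converts the substitution into a literal swap of columns $i$ and $i+1$ is exactly the computation the paper leaves implicit.
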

\begin{proof}
Note that, by \cite{GGRW}*{Proposition 1.2.4}, such a change of variables does not change the quasideterminant in the right hand side of Eq. (\ref{specializedseries}). Hence, it does not change the functions $\Lambda_{k;a}(x_1,\ldots, x_n)$. The functions $S_{k;a}(x_1,\ldots, x_n)$ are hence invariant under the same change of variables using Theorem \ref{specialize}, noting that they are polynomials in (shifts of) $\Lambda_{k;a}(x_1,\ldots, x_n)$.
\end{proof}

\subsection{Stability under Extension}\label{sect3.2}

Let $X=\lbrace x_1,x_2,\ldots\rbrace$ be noncommuting variables, and write $X_n=\lbrace x_1,\ldots, x_n\rbrace$. Denote by $\Sym^a(X_n)$ the subalgebra of the free skew-field in the variables $X_n$ generated by $\lbrace \Lambda_{k;a}(x_1,\ldots,x_n)\rbrace_{k\geq 0}$. 
Generalizing \cite{Getal}*{Section 7}, we can realize $\Sym^a$ as the projective limit  of the algebras $\Sym^a(X_n)$.

\begin{theorem}[Extension Stability]
There is an isomorphism of algebras 
$$\Sym^a\cong \varprojlim\Sym^a(X_n).$$
\end{theorem}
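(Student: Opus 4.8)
The plan is to realize the isomorphism through the assignment $S_{k;a}\mapsto\bigl(S_{k;a}(x_1,\ldots,x_n)\bigr)_{n\geq 1}$ and to identify each $\Sym^a(X_n)$ with an explicit graded quotient of $\Sym^a$. The first task is to produce the transition maps of the inverse system, which amounts to the \emph{extension stability} identities
\[
S_{k;a}(x_1,\ldots,x_n,a_1)=S_{k;a}(x_1,\ldots,x_n),\qquad \Lambda_{k;a}(x_1,\ldots,x_n,a_1)=\Lambda_{k;a}(x_1,\ldots,x_n),
\]
the $a$-shifted analogue of $s^*_\lambda(x_1,\ldots,x_n,0)=s^*_\lambda(x_1,\ldots,x_n)$ from \cite{OO} (recall $a_1=0$ when $a_i=i-1$). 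I would prove these from the quasideterminant formulas of Theorem \ref{specialize}: when the adjoined variable $x_{n+1}$ carries shift $a$, setting $x_{n+1}=a_1$ makes $\spower{x_{n+1}}{a}^{m}$ vanish for all $m\geq 1$ while remaining $1$ for $m=0$, so its column collapses to a coordinate vector; the reduction rules of \cite{GGRW}*{Section 1.3} then drop the $(n+1)$-size quasideterminants to $n$-size ones, and the shift-independence (\ref{altrel1})--(\ref{altrel2}) reconciles the accompanying change $\tau^{j-(n+1)}a\rightsquigarrow\tau^{j-n}a$ of the column shifts. Since these identities fix the chosen generators, they yield surjective algebra maps $\rho_n\colon\Sym^a(X_{n+1})\to\Sym^a(X_n)$, and hence an inverse system.

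Next I would pass to an abstract description. For each $n$ the free algebra $\Sym^a$ admits a homomorphism $p_n\colon\Sym^a\to\Sym^a(X_n)$, $S_{k;a}\mapsto S_{k;a}(x_1,\ldots,x_n)$. It is surjective: by Proposition \ref{transitionprop} every $\Lambda_{k;a}$ is a noncommutative polynomial in the shifts $S_{j;a}^{[s]}$, which by Lemma \ref{sumlemma} are $\mZ[a]$-combinations of the $S_{j;a}$, so $p_n(\Lambda_{k;a})=\Lambda_{k;a}(x_1,\ldots,x_n)$, and these generate $\Sym^a(X_n)$ by definition. Since $S_{k;a}(x_1,\ldots,x_n)=0$ for $k>n$ by Theorem \ref{specialize}, the homogeneous two-sided ideal $J_n$ generated by $\{S_{k;a}:k>n\}$ lies in $\ker p_n$. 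The essential point is the reverse inclusion $\ker p_n\subseteq J_n$, i.e.\ that the $n$ elements $S_{1;a}(x_1,\ldots,x_n),\ldots,S_{n;a}(x_1,\ldots,x_n)$ generate a \emph{free} associative subalgebra of the free skew field. Granting this, $p_n$ induces an isomorphism $\Sym^a/J_n\cong\Sym^a(X_n)$ under which $\rho_n$ becomes the canonical graded quotient map $\Sym^a/J_{n+1}\to\Sym^a/J_n$.

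Finally I would assemble the limit. The ideals form a decreasing chain $J_1\supseteq J_2\supseteq\cdots$ of homogeneous ideals with $(J_n)_d=0$ for $d\leq n$, since every element of $J_n$ has degree $>n$. Hence in each fixed degree $d$ the tower stabilizes: $(\Sym^a/J_n)_d=\Sym^a_d$ with the transition maps equal to the identity once $n\geq d$. Taking the inverse limit in the graded sense (degree by degree, as is standard for such limits of symmetric functions, cf.\ \cite{Getal}*{Section 7} and \cite{OO}), the limit is $\Sym^a$ itself. The induced map $\Phi\colon\Sym^a\to\varprojlim\Sym^a(X_n)$ is then injective because $\ker\Phi=\bigcap_n\ker p_n=\bigcap_n J_n=0$, and surjective because a compatible family is degreewise eventually constant and thus determines a (finite-degree) element of $\Sym^a$ mapping to it. This gives the desired algebra isomorphism.

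I expect the main obstacle to be the freeness statement $\ker p_n=J_n$ of the second step. Unlike the $S_{k;a}$ themselves, the specializations $S_{k;a}(x_1,\ldots,x_n)$ are non-homogeneous rational functions, so the absence of relations cannot be read off from a degree count directly; a leading-term argument must be organized with respect to a suitable filtration, or one transports the corresponding unshifted result of \cite{Getal}*{Section 7} across the graded deformation isomorphism of Lemma \ref{equishift}, verifying that this isomorphism is compatible with the concrete skew-field specializations and with the variable shifts $\psi$ of Definition \ref{variableshift}. The extension-stability collapse of the first step, though the most computational, is routine once the shift-independence identities (\ref{altrel1})--(\ref{altrel2}) are in hand.
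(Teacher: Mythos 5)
Your first step is, in both strategy and detail, the paper's entire proof: the authors define the transition maps by substituting $a_1$ for the adjoined variables, observe that after this substitution the last column of both numerator and denominator quasideterminants consists of a single $1$ above zeros (because $\spower{a_1}{a}^{m}=0$ for $m\geq 1$), expand along that column via \cite{Getal}*{Proposition 2.12}, and invoke Eq.~(\ref{altrel2}) to absorb the resulting change of column shifts --- exactly your column-collapse argument, carried out for $f=S_{k;a}$ and $\nu=1$, which suffices since the $S_{k;a}$ generate. Everything in your second and third steps (the maps $p_n$, the ideals $J_n$, the degreewise assembly of the limit) is left implicit in the paper, whose proof ends once stability on generators is verified.

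Concerning the point you flag as the main obstacle: you are right that the identification $\ker p_n=J_n$ --- equivalently, that $S_{1;a}(x_1,\ldots,x_n),\ldots,S_{n;a}(x_1,\ldots,x_n)$ satisfy no relation in low degree --- is genuinely needed, since it is what makes $\Phi$ injective and makes the tower stabilize degree by degree; and your proposal indeed does not prove it. But the paper does not prove it either: it is an implicit assumption of the published argument. So relative to the paper you have missed no ingredient; rather, you have made explicit a gap the paper glosses over, and you are also more careful than the paper in insisting that the limit be taken in the graded sense, without which the statement would fail (already for commutative symmetric functions the ungraded inverse limit contains unbounded-degree compatible families such as $\sum_k e_k$, and here the family $\bigl(\sum_{k\leq n}\Lambda_{k;a}(x_1,\ldots,x_n)\bigr)_n$ plays the same role). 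If you wish to close the freeness gap, your filtration route is the more promising of your two suggestions: by the remark following the theorem, the top-degree part of $S_{k;a}(x_1,\ldots,x_n)$ for the filtration $\deg(x_i)=1$, $\deg(x_i^{-1})=-1$ is the unshifted $S_k(x_1,\ldots,x_n)$, so a minimal-degree relation among the shifted specializations would force one among the unshifted ones, reducing to the case $a=0$ of \cite{Getal}*{Section 7}; by contrast, transporting freeness through the abstract isomorphism of Lemma \ref{equishift} requires the compatibility verification you mention, which is nontrivial precisely because the algebraic shift $\phi$ and the variable shift $\psi$ differ, cf.~Eq.~(\ref{shiftsrelation}).
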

\begin{proof}
This follows using the projections $\Sym^a(X_{n+\nu})\to \Sym^a(X_n)$, given by 
$$f(x_1,\ldots, x_{n+\nu})\longmapsto f(x_1,\ldots, x_{n},a_1,\ldots, a_1),$$
which are morphisms of algebras. 

In fact, it suffices to show this statement for $\nu=1$ and $f=S_{k;a}$. Consider $S_{k;a}(x_1,\ldots, x_n,a_1)$. The last column of both nominator $|A|_{n+1,n+1}$ and denominator $|B|_{n+1,n+1}$ are quasideterminants consisting of zeros, and a single $1$ in the top right corner. We expand both of the quasideterminants using the second formula in \cite{Getal}*{Proposition 12}, with $l=n$. This yields
\begin{align*}
S_{k;a}(x_1,\ldots, x_n,a_1)&=(-1)^{k-1}|A|_{n+1,n+1}|B|_{n+1,n+1}^{-1}\\
&=(-1)^{k-1}|A^{1,n+1}|_{n+1,n}|A^{n+1,n+1}|_{1,n}^{-1}(|B^{1,n+1}|_{n+1,n}|B^{n+1,n+1}|_{1,n}^{-1})^{-1}\\
&=(-1)^{k-1}|A^{1,n+1}|_{n+1,n}|B^{1,n+1}|_{n+1,n}^{-1}\\
&=(-1)^{k-1}\dpower{n+1}{1,\ldots, n-1,\boxed{n+k}}\cdot \dpower{n+1}{1,\ldots, n-1,\boxed{n}}\\
&=(-1)^{k-1}S_{k;a}(x_1,\ldots, x_n).
\end{align*}
Here, the third equality uses that $A$ and $B$ only differ in the last row, and the forth equality uses Eq. (\ref{altrel2}).
\end{proof}

We remark that $\Sym^a$ using this specialization as rational functions in infinitely many noncommuting variables has a filtration, given by the maximum degree of terms, where $\deg(x_i)=1$, $\deg(x_i^{-1})=-1$. The associated graded algebra is isomorphic to the algebra on noncommutative symmetric functions $\Sym$.

We further note that using the variable shift $y_i:=x_i-ic$, $\Sym^a(X_n)$ consists of noncommutative symmetric functions in the variables $Y_n=\lbrace y_1,\ldots,y_n\rbrace$. This defines an  isomorphism $\Sym^a(X_n)\cong \Sym(Y_n)$. However, such an isomorphism does not exist in the limit (cf. \cite{OO}*{Section 1}).

\appendix
\section{A Version of Bazin's Theorem}

In the proof of Theorem \ref{specialize} we apply the following version of Bazin's theorem for quasideterminants.

\begin{theorem}[Krob--Leclerc]\label{bazin}
Let $k\leq n$ be positive integers and $A$ a generic $2n\times n$-matrix. We denote by $A_{\lbrace j_1,\ldots, j_n\rbrace}$ the $n\times n$-submatrix consisting of the rows $j_1,\ldots, j_n$. Define a $k\times k$-matrix $B$ of quasideterminants by
\begin{align*}
b_{ij}&=|A_{\lbrace i,\ldots, i+n-2, n+j\rbrace}|_{n+j,n}, &\forall 1\leq i,j\leq k.
\end{align*}
Then
\begin{align}
|B|_{1k}&=|A_{\lbrace k,\ldots, n-1,n+1,\ldots, n+k\rbrace}|_{n+k,n}\cdot |A_{\lbrace k, \ldots, n+k-1\rbrace}|_{n,n}^{-1}\cdot |A_{\lbrace 1,\ldots, n\rbrace}|_{nn}.
\end{align}
\end{theorem}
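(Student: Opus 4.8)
The plan is to prove this noncommutative Bazin identity (due to Krob--Leclerc) by induction on $k$, using Sylvester's identity for quasideterminants from \cite{GGRW}*{Section 1.5} together with the homological relations \cite{GGRW}*{Theorem 1.4.2} and the elementary computational rules of \cite{GGRW}*{Section 1.3} as the main engine. Since $A$ is a generic matrix, the entire argument can be carried out inside the free skew field generated by its entries; in particular every maximal quasi-minor appearing is invertible, so no cancellation in a ground ring is needed and the only subtlety is the placement of inverse factors.

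For the base case $k=1$ the matrix $B$ is the single entry $b_{11}=|A_{\{1,\ldots,n-1,n+1\}}|_{n+1,n}$, so $|B|_{11}=b_{11}$; on the right hand side the sets $\{k,\ldots,n-1,n+1,\ldots,n+k\}$ and $\{k,\ldots,n+k-1\}$ specialize at $k=1$ to $\{1,\ldots,n-1,n+1\}$ and $\{1,\ldots,n\}$, and the last two factors $|A_{\{1,\ldots,n\}}|_{n,n}^{-1}\,|A_{\{1,\ldots,n\}}|_{nn}$ cancel, leaving exactly $b_{11}$. This already pins down the correct one-sided placement of the inverse factor. For the inductive step I would observe that the entries $b_{ij}=|A_{\{i,\ldots,i+n-2,n+j\}}|_{n+j,n}$ are bordered quasideterminants whose pivot block is the $(n-1)$ consecutive rows $i,\ldots,i+n-2$ of $A$. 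I would first normalize $B$ by multiplying its $i$-th row on the right by a suitable invertible quasi-minor factor (harmless for $|B|_{1k}$ by \cite{GGRW}*{Proposition 2.9}), so as to make the pivot blocks genuinely match after a one-step shift; Sylvester's identity then collapses $|B|_{1k}$ to a single quasi-minor of an enlarged matrix built from $A$, and restoring the normalizing factors, which telescope, yields the three-term product on the right.

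The main obstacle I anticipate is the bookkeeping forced by the \emph{staircase} structure: because the pivot rows $i,\ldots,i+n-2$ of $b_{ij}$ slide with the row index $i$, the matrix $B$ is not literally a Sylvester matrix, and making the pivot common introduces a chain of row-normalizing quasi-minor factors. In the commutative setting these would simply combine into a power of a single bracket, but here their noncommutative order must be tracked exactly, and each invertible factor must be kept on its correct side so that the chain telescopes precisely against $|A_{\{k,\ldots,n+k-1\}}|_{n,n}^{-1}$ and reproduces the asserted product. Verifying that this collapse leaves exactly the stated three factors, with no surviving conjugating terms, is the technical heart of the argument.
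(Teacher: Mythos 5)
Before assessing your argument, note what the paper actually does here: it gives \emph{no proof} of this theorem. The statement is imported from Krob--Leclerc \cite{KL}*{Theorem 2.8}, with the remark that the version stated is the transposed one (a generic $2n\times n$ matrix in place of Krob--Leclerc's $n\times 2n$ matrix). So there is no internal argument for your proposal to match; the relevant comparison is against Krob--Leclerc's own published proof, not against anything in this paper.

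Judged on its own terms, your proposal has a genuine gap: the inductive step, which is the entire content of the theorem, is never carried out. The base case $k=1$ is checked correctly, but everything after it is conditional (``I would normalize'', ``Sylvester's identity then collapses'', ``the factors telescope''), and you yourself flag the verification that the collapse produces exactly the three stated factors ``with no surviving conjugating terms'' as an unresolved difficulty rather than resolving it. Moreover, the one concrete device you do specify is invalid as stated: multiplying the $i$-th row of $B$ \emph{on the right} by an invertible quasi-minor is not harmless for $|B|_{1k}$. The quasideterminant calculus of \cite{GGRW} permits left multiplication of rows and right multiplication of columns, and only for non-pivot rows or columns does the quasideterminant stay unchanged; a common \emph{right} factor of a row does not pull out. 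Already for a $2\times 2$ matrix with pivot row entries $c_{11}\rho, c_{12}\rho$ one gets $|B|_{12}=c_{12}\rho - c_{11}\rho\, c_{21}^{-1}c_{22}$, which differs from $\left(c_{12}-c_{11}c_{21}^{-1}c_{22}\right)\rho$ unless $\rho$ commutes with $c_{21}^{-1}c_{22}$, and for a non-pivot row the factor survives as an inner conjugation. Since the staircase entries $b_{ij}$ share their natural quasi-minor factors on the right along each \emph{row}, this is exactly the bad side. This obstruction is the reason the original Krob--Leclerc identity is formulated for $n\times 2n$ matrices, where the analogous normalizations are column operations and hence legitimate; the $2n\times n$ version stated here is then obtained by transposing via the anti-isomorphism of the free skew field, not by rerunning the row argument. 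To salvage your plan you would either have to prove the transposed statement first and then apply the anti-isomorphism, or find a genuinely row-compatible normalization --- neither of which your sketch supplies.
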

Note that this is a different (transposed) version of Bazin's theorem which was derived in \cite{KL}*{Theorem 2.8}, using a $n\times 2n$-matrix instead.



\bibliography{biblio}
\bibliographystyle{amsrefs}

\end{document}